\documentclass[12pt, a4paper, reqno]{amsart}

\usepackage[utf8]{inputenc} 
\usepackage[T1]{fontenc}  
\usepackage{newtxtext}   
\usepackage{eulervm}    
\usepackage{amsmath,amssymb}

\usepackage[a4paper, margin=1in]{geometry}
\usepackage{graphicx}
\usepackage{tikz}
\usepackage{overpic}
\usepackage{shadow}

\usepackage{booktabs}
\usepackage{threeparttable}
\usepackage{makecell}
\usepackage{multirow}

\usepackage{subcaption} 
\usepackage{mathrsfs}  
\usepackage{csquotes}  

\usepackage{natbib}   

\usepackage{hyperref}
\hypersetup{
  plainpages = True,
  pdfstartview= FitH,
  bookmarksopen=true,
  pdfpagemode = none,
  colorlinks = true,
  linkcolor  = blue,
  citecolor  = blue,
  urlcolor  = blue
}

\def\le{\leqslant}
\def\ge{\geqslant}
\def\tr#1{\left\lfloor #1\right\rfloor}

\def\cl#1{\lceil #1\rceil}

\def\lpa#1{\bigl({#1}\bigr)}
\def\Lpa#1{\Bigl({#1}\Bigr)}

\def\llpa#1{\biggl({#1}\biggr)}
\def\dd#1{{\,\rm d}#1}
\def\ve{\varepsilon}
\def\eqtext#1{\quad\text{#1}\quad}
\def\lbb{\lambda_{\text{bb}}}
\def\lbe{\lambda_{\text{be}}}
\def\leb{\lambda_{\text{eb}}}
\def\lee{\lambda_{\text{ee}}}

\DeclareRobustCommand{\Stirling}{\genfrac\{\}{0pt}{}}
\DeclareRobustCommand{\stirling}{\genfrac[]{0pt}{}}

\newtheorem{thm}{Theorem}[section]
\newtheorem{remark}{Remark}[section]
\newtheorem{lemma}{Lemma}[section]
\newtheorem{cor}{Corollary}[section]
\newtheorem{prop}{Proposition}[section]

\graphicspath{{./ele-stir2-figs/}}

\def\and{\mbox{\quad and \quad}}
\begin{document}

\title[Elementary asymptotics of Stirling numbers]{Elementary asymptotics for the Stirling numbers of the second kind:\\ The central range}

\let\origmaketitle\maketitle
\def\maketitle{
 \begingroup
 \def\uppercasenonmath##1{} 
 \let\MakeUppercase\relax 
 \origmaketitle
 \endgroup
}

\thanks{The research of the first author was partially supported by Taiwan Ministry of Science and Technology Grant MOST 108-2118-M-001-005-MY3. The second author was partially supported by the NSTC Grant 114-2118-M-031-002, and conducted part of this research during a post-doctoral appointment at the Institute of Statistical Science, Academia Sinica. The third author was supported by NSTC Grant 112-2811-M-001-002 during his 2023 appointment as Visiting Associate Professor at the Institute of Statistical Science, Academia Sinica, and subsequently by the Institute during an extended research appointment there. He thanks the Institute for its hospitality and support.}

\author[]{Hsien-Kuei Hwang}
\address[Hsien-Kuei Hwang]{Institute of Statistical Science, Academia Sinica, Taipei, 115, Taiwan}
\email{\textbf{hkhwang@stat.sinica.edu.tw}}

\author[]{Chong-Yi Li}
\address[Chong-Yi Li]{Department of Mathematics, Soochow University, Taipei, 111, Taiwan}
\email{\textbf{chongyili356@gmail.com}}

\author[]{Vytas Zacharovas}
\address[Vytas Zacharovas]{Institute of Computer Science, Vilnius University, Naugarduko 24, LT-03225 Vilnius, Lithuania}
\email{\textbf{vytas.zacharovas@mif.vu.lt}}

\date{\today}

\maketitle

\begin{abstract}

We derive the local and central limit theorems for the Stirling numbers of the second kind by elementary means, obtaining as corollaries effective asymptotic estimates for the Bell numbers and for the moments of the distribution. We also develop asymptotic expansions along several directions, all based on a novel finite-differencing approach---the first self-contained elementary justification of such expansions.

\end{abstract}
\tableofcontents

\bigskip


\section{Introduction}

The Stirling numbers of the second kind (also known as Stirling partition numbers; see \href{https://oeis.org/A008277}{OEIS A008277}), named after James Stirling (1692–1770) by Nielsen \cite{Nielsen1906}, are defined by the sieve formula
\begin{align}\label{E:Snk-sum}
	\Stirling{n}{k}
	= \frac{1}{k!} \sum_{0 \le j \le k}
	\binom{k}{j} (-1)^j (k - j)^n 
	\qquad (n \ge 0,\ 0 \le k \le n),
\end{align}
which counts the number of ways to partition the set $\{1,2,\dots,n\}$ into $k$ non-empty blocks. This identity follows from the standard inclusion-exclusion principle; see \cite[\S 5.1]{Comtet1974}. Since their formal introduction by Stirling in 1730 \cite{Stirling1764}, these numbers have been extensively studied from a variety of perspectives; see, for example, \cite{Charalambides2005, Duran2013, Goldstine2012, Knuth1998, Mezo2019, Simovici2021}. In this paper, we analyze the inclusion–exclusion alternating sum in \eqref{E:Snk-sum} directly and develop systematic techniques for deriving its asymptotic expansion. This approach is of methodological interest in its own right, as alternating sums of this form frequently arise in applications---particularly in the context of inclusion-exclusion. A detailed historical and technical survey of the asymptotics of $\Stirling{n}{k}$ since the 1780s will be provided in a future companion paper.

\textbf{Notation.} Throughout this paper, $W(x)$ denotes the principal branch of the Lambert $W$-function, i.e., the solution to the equation $w e^w = x$ that is positive when $x > 0$. As $x \to \infty$, it is known that
\[
	W(x) = \log \frac{x}{\log x} + \frac{\log \log x}{\log x}
	+ \frac{(\log \log x)(\log \log x - 2)}{2(\log x)^2}
	+ O\left(\frac{(\log \log x)^3}{(\log x)^3}\right);
\]
see \cite{Corless1996} for further details. 

We use the abbreviations CLT and LLT for the \emph{central} and \emph{local limit theorems}, respectively. For convenience, we also write $\rho := \frac nk$, and
\[
	\lambda = \lambda(n) := ke^{-\rho} = ke^{-\frac nk}, \eqtext{or}
	e^{-\rho} = e^{-\frac nk} = \frac{\lambda}k;
\]
thus $k= \frac{n}{W(\frac{n}{\lambda})}$. In this paper, we work mainly on the range $0\le k\le \frac{2n}{\log n}$, which means that $0\le \lambda\le\frac{2\sqrt{n}}{\log n}$.

\section{A simple elementary approach to the LLT}

Assume a uniform distribution on the set of all partitions of $n$ elements; let $X_n$ denote the number of blocks in a randomly chosen set partition. Then
\[
  \mathbb{P}(X_n=k)
	= \frac{1}{B_n}\Stirling{n}{k},\qquad
	(n\ge1; 1\le k\le n),
\]
where $B_n := \sum_{1\le k\le n}\Stirling{n}{k}$ are the Bell numbers (see \href{https://oeis.org/A000110}{OEIS A000110}). Let
\[
	\Phi(x) := \frac1{\sqrt{2\pi}}
	\int_{-\infty}^x e^{-\frac12t^2}\dd t
\]
denote the standard normal distribution function, and define
\begin{align}\label{E:mu-var}
 	\mu_n := \frac{n}{W(n)},
 	\eqtext{and} 
 	\sigma_n^2 := \frac{n}{W(n)(W(n)+1)}.
 \end{align}
\begin{thm} \label{T:llt}
The Stirling partition numbers satisfy the CLT:
\begin{align}\label{E:clt}
	\sup_{x\in\mathbb{R}}
	\biggl|\mathbb{P}\Lpa{\frac{X_n-\mu_n}{\sigma_n}\le x}
	-\Phi(x) \biggr|
	= O\lpa{n^{-\frac12}\log n},
\end{align}
and the LLT:
\begin{align}\label{E:llt}
	\mathbb{P}(X_n=\tr{\mu_n +x\sigma_n})
	=\frac{e^{-\frac12x^2}}{\sqrt{2\pi}\,\sigma_n}
	\Lpa{1+O\Lpa{\frac{1+|x|^3}{\sigma_n}}},
\end{align}
uniformly for $x=o(\sigma_n^{\frac13})$.
\end{thm}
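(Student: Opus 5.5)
The plan is to prove the LLT \eqref{E:llt} first, by working directly with the sieve formula \eqref{E:Snk-sum}, and then to obtain the CLT \eqref{E:clt} by summing it. The starting observation is that $k!\Stirling{n}{k}/k^n=\mathbb{P}(\text{no empty urn})$ when $n$ balls are thrown into $k$ urns. Factoring out $k^n$ gives
\[
	\frac{k!}{k^n}\Stirling{n}{k}
	=\sum_{0\le j\le k}\binom{k}{j}(-1)^j\Lpa{1-\frac jk}^n .
\]
In the central range $k\approx \mu_n$ we have $\lambda=ke^{-\rho}$ bounded; indeed $\lambda\to1$ at $k=\mu_n$. We then compare $(1-\frac jk)^n$ with $e^{-\rho j}$. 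Writing $(1-\frac jk)^n=e^{-\rho j}\exp\lpa{-\rho(\frac{j^2}{2k}+\cdots)}$ and summing the main term gives $(1-e^{-\rho})^k=e^{-\lambda}(1+O(\lambda^2/k))$. The error comes from the correction factor. Because the sum alternates, we cannot bound it termwise; instead we bound it by an elementary finite-difference argument. Here $\sum_j\binom kj(-1)^j f(j)$ is a $k$-th difference, and we use $\sum\binom kj(-1)^j e^{-\rho j}j^m=(-\partial_\rho)^m(1-e^{-\rho})^k$, which is $O(\lambda^m)\,e^{-\lambda}$ times polynomial factors in $\rho$. The goal of this step is
\[
	\Stirling nk=\frac{k^n}{k!}\,e^{-\lambda}\Lpa{1+O\Lpa{\frac{\rho^2(1+\lambda^2)}{k}}}.
\]

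Next we apply Stirling's formula to $k!$ and examine $\log\lpa{k^n e^{-\lambda}/k!}$ as a function of $k$ near $\mu_n$. Its derivative is $\frac nk-\log k+O(\lambda/k)+\ldots$, and the main part vanishes exactly where $\frac nk=\log k$, i.e.\ $k=n/W(n)=\mu_n$. Its second derivative is $-\frac n{k^2}-\frac1k=-\frac{W(n)(W(n)+1)}{n}=-\sigma_n^{-2}$. A Taylor expansion to third order, with third derivative $O(n/k^3)=O(\sigma_n^{-2}/k)$, then gives for $k=\tr{\mu_n+x\sigma_n}$
\[
	\Stirling nk=\Stirling n{k_0}\,e^{-\frac12x^2}\Lpa{1+O\Lpa{\frac{1+|x|^3}{\sigma_n}}}.
\]
Here $k_0$ is the integer nearest $\mu_n$, and the floor shift contributes $O(|x|/\sigma_n)$. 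The condition $x=o(\sigma_n^{1/3})$ keeps the cubic term small. The effect of $e^{-\lambda}$ is smooth with derivative $O(\rho\lambda/k)=O(1/\sigma_n)$ relative, since $\sigma_n^2\asymp k/\rho$. This is absorbed.

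For normalization we need $B_n\sim\sqrt{2\pi}\sigma_n\Stirling n{k_0}(1+O(1/\sigma_n))$. To get it, we sum the local estimate over $|k-\mu_n|\le\sigma_n^{1+\epsilon}$ and use Euler--Maclaurin or a Riemann-sum comparison for $\sum e^{-x^2/2}$ with odd-term cancellation, giving relative error $O(1/\sigma_n)$. For the tails we use log-concavity of $\Stirling nk$ in $k$, which is elementary from the triangular recurrence, or directly the ratio estimate from the first step. This gives geometric decay outside $\sigma_n^{1+\epsilon}$, and the tails are therefore negligible. Dividing yields \eqref{E:llt}.

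For the CLT, we sum the LLT over $k\le\mu_n+x\sigma_n$ for $|x|\le c\sqrt{\log n}$ and bound the remaining tails by the same decay. The Riemann-sum discretization error is $O(1/\sigma_n)$, and $1/\sigma_n\asymp\sqrt{\log n/n}\cdot\sqrt{\log n}$. A cruder treatment, with moderate-deviation errors summed, gives the stated $O(n^{-1/2}\log n)$. The \textbf{main obstacle} is the first step: controlling the alternating sum uniformly without cancellation loss. I would handle it by writing the correction factor $\exp(-\rho(\frac{j^2}{2k}+\frac{j^3}{3k^2}+\cdots))-1$ as a power series in $j$ for $j\le k^{1/2}$. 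Each monomial is then handled through the derivative identity above. For larger $j$ we bound the whole difference operator through its integral representation, $\Delta^k f=\int f^{(k)}$ over a box, applied to $f(t)=(1-t/k)^n e^{\rho t}$-type factors.
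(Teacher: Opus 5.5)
Your architecture matches the paper's. You take a relative estimate of $\Stirling{n}{k}$ against $k^n/k!$ near $\mu_n$, and expand $\log(k^n/k!)$ to third order by Stirling's formula (Lemma~\ref{L:k-central}). You then sum over $|k-\mu_n|\le\sigma_n^{1+\epsilon}$ with $\epsilon<\frac13$ (the paper uses $\sigma_n^{5/4}$) to get $B_n$, divide, and obtain the CLT by summing the LLT, using $1/\sigma_n\asymp n^{-1/2}\log n$. Relative accuracy $O(1/\sigma_n)$ for $B_n$ suffices for \eqref{E:llt}.

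The problem is Step~1, which you call the main obstacle: the diagnosis is wrong, and the proposed mechanism does not work as stated. For $x=o(\sigma_n^{1/3})$ one has $\lambda=1+O(|x|/\sigma_n)$, so ignoring the signs costs only a bounded factor. By \eqref{E:elem-ineq},
\[
	\Bigl|S(n,k)-\lpa{1-e^{-\rho}}^k\Bigr|
	\le\frac{n}{k^2}\sum_{0\le j\le k}\binom{k}{j}j^2e^{-\rho j}
	=\frac{n}{k^2}\,\lambda(\lambda+1)\lpa{1+e^{-\rho}}^{k-2}.
\]
Relative to $(1-e^{-\rho})^k$ this is $O\lpa{\frac{n}{k^2}\lambda(\lambda+1)e^{2\lambda}}=O\lpa{n^{-1}(\log n)^2}$. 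This is Proposition~\ref{P:R1}, and it is all the paper's proof needs; exponential cancellation matters only once $\lambda\to\infty$.

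Your mechanism fails as written for two reasons.
\begin{itemize}
\item The identity $\sum_{0\le j\le k}\binom{k}{j}(-1)^je^{-\rho j}j^m=(-\partial_\rho)^m(1-e^{-\rho})^k$ evaluates only the \emph{complete} sum over $0\le j\le k$. It cannot be applied to an expansion used only for $j\le k^{1/2}$.
\item The box-integral representation bounds a complete difference, not the part of the sieve sum with $j>k^{1/2}$. Applied to the full $k$-th difference of $(1-t/k)^n$ (which is $S(n,k)$ by \eqref{E:stir-leib}), it gives only $n^{\underline{k}}k^{-k}\ge(\rho-1)^k$.
\end{itemize}
The coherent repair is to truncate the Taylor expansion of $e^{\rho j}(1-j/k)^n$ for \emph{all} $j$, apply the identity to the polynomial part, and bound the remainder termwise. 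That is again a termwise bound; truncated after the constant term, it is exactly the argument above. The genuinely cancellation-sensitive device is Leibniz's rule (Theorem~\ref{T:finite-diff-exp}). There the split is on the order $j$ of the difference $D_{n,k}(j)$, not on the sieve index.

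Your tail argument differs from the paper's but works. It uses log-concavity of $\Stirling{n}{k}$ in $k$ (a short induction on the recurrence), plus the consecutive ratio at $k_\pm$ taken from the Step~1 estimate. That ratio cannot come from the LLT, whose error $O(|x|^3/\sigma_n)$ swamps the ratio's deviation from~$1$. Your alternative, ``directly the ratio estimate from the first step,'' fails on its own, since that estimate holds only for bounded $\lambda$. The paper's route is lighter. It uses \eqref{E:Snk-ua} and the unimodality of $k^n/k!$ to bound both tails by $n\max\{k_-^n/k_-!,\,k_+^n/k_+!\}$, with no information on $\Stirling{n}{k}$ outside the window.

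There is also a small slip in Step~2. Since $\rho/k\asymp\sigma_n^{-2}$, the $\lambda$-part $\lambda(1+\rho)/k$ of the derivative is $O(\sigma_n^{-2})$ per unit of $k$, not $O(1/\sigma_n)$. It therefore contributes $O(|x|/\sigma_n)$ over the displacement $x\sigma_n$, so your conclusion is unaffected.
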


The CLT (without rate) was first established by Harper \cite{Harper1967}, where he also mentions the LLT without proof; see also Bender \cite{Bender1973} for an approach to obtain general CLTs and LLTs, and Canfield \cite{Canfield1977} for a modification of the sufficient conditions. Menon \cite{Menon1981} also derived the LLT by an elementary approach developed earlier in \cite{Menon1973} and similar to ours, but his proof is incomplete. Unlike Menon's argument, we do not rely on Bonferroni inequality, which may not be available in more general situations. For a more detailed comparative discussion, see Section~\ref{S:comparison}.

Since our proof of Theorem~\ref{T:llt} is unexpectedly simple, we present it first in the end of this section before addressing the implications of the theorem, followed by further refinements (in Section~\ref{S:PC}), and a comparison of our results with known ones in the literature (in Section~\ref{S:comparison}). 

\subsection{Asymptotic nature of the sieve formula, I} \label{S:sieve}
Spelling out the first few terms of \eqref{E:Snk-sum}, we see that 
\begin{align*}
  \Stirling{n}{k}
  = \frac{k^n}{k!}\left(1
  -k\Lpa{1-\frac1k}^n
  +\binom{k}{2} \Lpa{1-\frac2k}^n
  +\cdots\right),
\end{align*}
which is itself an asymptotic expansion for $1\le k\le k_0$, where $k_0$ is chosen such that each term in the parentheses on the right-hand side is of a smaller order than its previous one, or when
\begin{align*}
  k_0e^{-\frac n{k_0}}=o(1),
  \eqtext{namely}
  k_0:=\frac{n}{W(\xi_n n)}
  \quad\text{with}\quad \xi_n\to\infty.
\end{align*}
This simple argument covers already the range of $k$ up to 
\begin{align*}
  1\le k\le k_0=\frac{n}{W(\xi_n n)}
  =\frac{n}{\log \frac{n}{\log n}+\log \xi_n},
\end{align*}
in which $\Stirling{n}{k}$ satisfies uniformly the asymptotic approximation
\[
  \Stirling{n}{k}
  = \frac{k^n}{k!}\Lpa{1+O\lpa{ke^{-\frac nk}}}.
\]
This estimate (without error term) was first derived by Jordan for the case $k=O(1)$ \cite{Jordan1933}; see also \cite[\S 59]{Jordan1947}. The broader range $k\le k_0$ was later established by Korshunov \cite{Korshunov1983}, who refined an earlier estimate by Bernstein (originally appearing in his 1934 monograph \cite{Bernstein1934}). Korshunov's range $k\le k_0$ can be expressed as
\begin{align}\label{E:k00}
	\frac{n}{k}-W(n)\to\infty \eqtext{or}
	\frac{n}{k}-\log \frac{n}{\log n}\to\infty,
\end{align}
which falls slightly short of the region where the mean (and the mode) of the distribution is concentrated, namely around $\frac{n}{W(n)} + O(1)$, or when $\frac{n}{k} - W(n) = O(1)$.

\subsection{From $\frac{n}{\log n-\log\log n}$ to $\frac{n}{\log n-2\log\log n}$}
We now show that a simple extension of the above argument will provide the estimate required to establish the LLT in a wider range $k=\frac{n}{W(n)}+O(n^{\frac12+\ve})$, or $\frac nk-W(n) = O(n^{-\frac12+\ve} (\log n)^2)$.

Our elementary approach relies on the following inequality.

\begin{lemma}[{\cite[Lemma 5]{Bai2001}}]
For $n\ge1$ and $0\le t\le 1$,
\begin{align}\label{E:elem-ineq}
	|(1-t)^n - e^{-nt}| \le nt^2 e^{-nt}.
\end{align}
\end{lemma}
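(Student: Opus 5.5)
We need to prove, for $n\ge1$ and $0\le t\le1$, that $|(1-t)^n-e^{-nt}|\le nt^2e^{-nt}$. The idea is to write the difference as a telescoping sum and bound each factor using elementary inequalities for $1-t$ versus $e^{-t}$.

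First, two standard facts:
\[
	0\le 1-t\le e^{-t}
	\eqtext{and}
	e^{-t}-(1-t)\le \tfrac12 t^2,
\]
for $0\le t\le 1$. The second follows from the alternating Taylor series, or from $\int_0^t(1-e^{-s})\dd s\le\int_0^t s\dd s$.

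Next, with $a=e^{-t}$ and $b=1-t$, factor the difference of $n$th powers:
\[
	a^n-b^n=(a-b)\sum_{0\le j<n}a^{j}b^{n-1-j}.
\]
Each summand is at most $a^{n-1}=e^{-(n-1)t}$, because $0\le b\le a$. Hence
\[
	0\le e^{-nt}-(1-t)^n\le \tfrac12 t^2\cdot n e^{-(n-1)t}=\tfrac12 nt^2e^{t}\,e^{-nt}.
\]
Since $e^{t}\le e<4$ is not quite enough, the crude bound gives $\tfrac e2 nt^2e^{-nt}$. We must sharpen it to get constant $1$.

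This sharpening is the only real obstacle, and it can be handled in either of two ways.

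\emph{Option (a).} Bound the gap factor more finely by $e^{-t}-(1-t)\le \tfrac12t^2e^{-t}\cdot c(t)$, and check whether $\tfrac12 t^2 e^{t}$ can be beaten. For $t\le \log 2$ we have $e^t\le 2$, which gives the bound with constant $1$ directly.

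\emph{Option (b).} For $\log2<t\le1$ and $n\ge2$, use the trivial bound $|(1-t)^n-e^{-nt}|\le e^{-nt}$ and compare it with $nt^2e^{-nt}$. Since $nt^2\ge 2(\log 2)^2\approx 0.96$, this falls just short. So in that range we instead use the tighter estimate $|\cdot|\le \max\{(1-t)^n,e^{-nt}\}=e^{-nt}$ together with $e^{-nt}-(1-t)^n\le e^{-nt}$, which requires only $nt^2\ge1$. That holds for $n\ge3$, since $3(\log2)^2>1$. The remaining case $n=2$ is an explicit check: $e^{-2t}-(1-t)^2\le 2t^2e^{-2t}$ on $[\log 2,1]$. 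The case $n=1$ is the first-order inequality $e^{-t}-(1-t)\le t^2e^{-t}$, which is equivalent to $1-(1-t)e^{t}\le t^2$. This holds because $(1-t)e^t\ge 1-t^2$, and that in turn follows from $e^t\ge 1+t$ for $t\le1$.

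Finally, the sign: since $(1-t)^n\le e^{-nt}$, the absolute value is just $e^{-nt}-(1-t)^n$, so the lemma follows from combining the case analysis above.
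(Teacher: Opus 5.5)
Your argument is correct, but it takes a detour that the paper avoids. The paper factors out $e^{-nt}$: it writes $(1-t)^n-e^{-nt}=e^{-nt}\bigl(((1-t)e^t)^n-1\bigr)$ and uses $(1-t)e^t\ge 1-t^2$. That is exactly the inequality you use for $n=1$, coming from $e^t\ge 1+t$. Bernoulli's inequality $(1-t^2)^n\ge 1-nt^2$ then gives constant $1$ for all $n\ge1$ and $t\in[0,1]$ at once.

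Your telescoping estimate $a^n-b^n\le n(a-b)a^{n-1}$ is sound. The loss of the factor $e^t/2$ comes only from bounding the gap $e^{-t}-(1-t)$ by $\tfrac12t^2$, which does not carry the compensating factor $e^{-t}$. If you instead insert your own $n=1$ inequality $e^{-t}-(1-t)\le t^2e^{-t}$ into the telescoping bound, you get $nt^2e^{-t}\cdot e^{-(n-1)t}=nt^2e^{-nt}$ immediately, and the whole case split disappears.

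As written, two points need tidying. First, Options (a) and (b) are not alternative ways to finish. They are complementary cases, $t\le\log 2$ and $t>\log2$, and both are needed. Second, the case $n=2$, $t\in(\log 2,1]$ is asserted rather than checked. It does hold, for instance because $(1-t)^2e^{2t}\ge(1-t^2)^2\ge1-2t^2$, which again is just the paper's argument. With that line added, your proof is complete.

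What the paper's relative formulation buys is that it works with the ratio $(1-t)e^t\in[0,1]$ directly. It therefore never needs numerical thresholds such as $3(\log 2)^2>1$.
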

\begin{proof}
We have	
\begin{align*}
	0\ge(1-t)^n - e^{-nt}
	= e^{-nt}\lpa{\lpa{e^t(1-t)}^n-1}
	\ge e^{-nt}\lpa{(1-t^2)^n-1}\ge -nt^2 e^{-nt},
\end{align*}
by the inequalities $e^t\ge 1+t$ and (Bernoulli's inequality) 
$(1-y)^n-1\ge -ny$ for $n\ge1$ and $0\le y\le 1$. 	
\end{proof}

Recall that $\lambda =\lambda(n,k) := ke^{-\frac nk}$ is increasing in  $k$ for fixed $n$; representative values are listed below.
\begin{center}
\begin{tabular}{c|cccccccc}
$k$ & $1$ & $\frac{n}{\log n}$ & $\frac{n}{W(n)}$
& $(1+c)\frac{n}{\log n}$ 
& $\ve n$ & $n$ \\ \hline
$\lambda$ &	$e^{-n}$ & $\frac1{\log n}$ & $1$
& $(1+c)\frac{n^{c/(1+c)}}{\log n}$ 
& $\ve e^{-1/\ve}n$ & $e^{-1}n$
\end{tabular}	
\end{center}
Here and below, generic constants such as $c$ and $\ve$ are positive and may vary from one occurrence to another, unless explicitly fixed. For convenience, define, \emph{throughout this paper},
\begin{align}\label{E:Snk-def}
	S(n,k) := \Stirling{n}{k}\frac{k!}{k^n},
\end{align} 
and 
\begin{align}\label{E:k1-0}
	k_1 := \frac{n}{W\lpa{\frac{n}{\lambda^+}}}
	\eqtext{with} 
	\lambda^+ = W\Lpa{\frac{\sqrt{n}}{(\log n)^{1+\ve}}}.
\end{align}

\begin{prop} \label{P:R1} Uniformly for $1\le k\le k_1$, 
\begin{align}\label{E:Snk-central}
	S(n,k)
	= \Lpa{1-\frac{\lambda}{k}}^k\lpa{1+E_{n,k}},
\end{align}	
where 
\begin{align}\label{E:Enk-est}
	E_{n,k} = O\Lpa{\frac{n}{k^2}\, 
	\lambda(\lambda+1)e^{2\lambda}}.
\end{align}
\end{prop}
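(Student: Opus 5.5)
Starting from the sieve formula \eqref{E:Snk-sum}, I write
\[
	S(n,k) = \sum_{0\le j\le k}\binom{k}{j}(-1)^j\Lpa{1-\frac jk}^n ,
\]
and compare it termwise with the model sum
\[
	\sum_{0\le j\le k}\binom{k}{j}(-1)^j e^{-\frac{nj}{k}}
	= \lpa{1-e^{-\frac nk}}^k = \Lpa{1-\frac{\lambda}{k}}^k ,
\]
which is exact by the binomial theorem. The proof then reduces to estimating the difference. Applying \eqref{E:elem-ineq} with $t=j/k\in[0,1]$ gives, for each $j$,
\[
	\Bigl|\Lpa{1-\frac jk}^n - e^{-\frac{nj}{k}}\Bigr|
	\le \frac{nj^2}{k^2}\,e^{-\frac{nj}{k}}
	= \frac{nj^2}{k^2}\Lpa{\frac{\lambda}{k}}^{j}.
\]
Summing with absolute values,
\[
	\bigl|S(n,k) - (1-\tfrac{\lambda}{k})^k\bigr|
	\le \frac{n}{k^2}\sum_{j\ge1}\binom kj j^2\Lpa{\frac\lambda k}^j
	\le \frac{n}{k^2}\sum_{j\ge1}\frac{j^2\lambda^j}{j!}
	= \frac{n}{k^2}\,\lambda(\lambda+1)e^{\lambda},
\]
where I use $\binom kj\le k^j/j!$ and the standard identity $\sum_j j^2\lambda^j/j! = \lambda(\lambda+1)e^\lambda$.

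To pass to the relative form \eqref{E:Snk-central}, I divide by $(1-\lambda/k)^k$ and need the lower bound $(1-\lambda/k)^k\ge c\,e^{-\lambda}$. This is the only genuinely delicate point, and it is where the restriction $k\le k_1$ enters. From $\log(1-x)\ge -x-x^2$ for $0\le x\le\frac12$ I get
\[
	\Lpa{1-\frac\lambda k}^k\ge e^{-\lambda-\lambda^2/k}.
\]
So it suffices that $\lambda/k\le\frac12$ and that $\lambda^2/k=O(1)$ throughout the range. Since $\lambda$ is increasing in $k$, for $k\le k_1$ we have $\lambda\le\lambda^+ = W\lpa{\sqrt n/(\log n)^{1+\ve}} = O(\log n)$, while $k\ge1$. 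This does not immediately give $\lambda^2/k$ small for tiny $k$. However, for small $k$ the value of $\lambda=ke^{-n/k}$ is exponentially small: for $k\le n/\log n$ one has $\lambda\le 1/\log n$. Hence the bound is trivial there, and for $k\ge n/\log n$ we get $\lambda^2/k=O((\log n)^3/n)$. Either way, $(1-\lambda/k)^{-k}=O(e^{\lambda})$ uniformly.

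Combining the two estimates yields $E_{n,k}=O\lpa{\frac{n}{k^2}\lambda(\lambda+1)e^{2\lambda}}$, as claimed. The role of the specific choice of $\lambda^+$ in \eqref{E:k1-0} is only to keep this error bound $o(1)$ in later use: $\frac{n}{k^2}\lambda^2e^{2\lambda}\approx \frac{(\log n)^2}{n}\lambda^2 e^{2\lambda}$, and $\lambda e^\lambda\le \sqrt n/(\log n)^{1+\ve}$. For the proposition itself, only the uniform lower bound above is needed. I expect that lower bound to be the main, though mild, obstacle; everything else is the elementary inequality and a Poisson-type moment sum.
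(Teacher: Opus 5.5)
Your proof is correct and follows essentially the paper's route: termwise application of \eqref{E:elem-ineq}, the binomial theorem for the main term $\lpa{1-e^{-n/k}}^k$, and an absolute-value bound on the error sum. The differences are cosmetic. The paper evaluates $\sum_j\binom kj j^2 e^{-nj/k}$ exactly instead of using $\binom kj\le k^j/j!$, and it bounds the quotient by $\frac{1+x}{1-x}\le e^{2x/(1-x)}$ to get $e^{2\lambda/(1-\lambda/k)}$; replacing that by $e^{2\lambda}$ tacitly uses the same fact, $\lambda^2/k=O(1)$ on $1\le k\le k_1$, which you verify explicitly.
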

Note that 
\begin{align*}
	W\Lpa{\frac{n}{\lambda^+}}
	&= \log n -2\log\log n +\log 2
	+ \frac{2(3+\ve)\log \log n +O(1)}{\log n}.
\end{align*}
Comparing this bound with $k_0$, we observe that the range is extended only slightly from 
\begin{align}\label{E:k0}
  k_0 = \frac{n}{\log n-\log\log n + \xi_n}
	\eqtext{for any}\xi_n\to\infty,
\end{align}
to 
\begin{align}\label{E:k1}
	k_1 = \frac{n}{\log n-2\log\log n
	+\log 2+\frac{2(3+\ve)\log \log n +O(1)}{\log n}},
\end{align}
yielding a net difference in the denominator that is asymptotic to 
\[
	\log \log n+\xi_n -\log2-\frac{2(3+\ve)\log \log n }{\log n}.
\]
This quantity, when neglecting the unspecified term $\xi_n$, is nevertheless negative for $n\le74\,316$ when \(\varepsilon=1\). Despite the marginal gain in range, this refinement is sufficient for our LLT application in \eqref{E:llt}.

\begin{proof}[Proof of Proposition~\ref{P:R1}]
By applying \eqref{E:elem-ineq} to the factor $(1-\frac jk)^n$ in \eqref{E:Snk-sum}, we obtain
\begin{align}
  S(n,k)
  &= \sum_{0\le j\le k}
	\binom{k}{j}(-1)^j e^{-\frac nk j}
	\Lpa{1+O\Lpa{\frac{j^2 n}{k^2}}} \nonumber \\
	&= \lpa{1-e^{-\frac nk}}^k\lpa{1 + O(E_{n,k})},
	\label{E:appr1}
\end{align}
where
\begin{align}\label{E:Enk-1}
	\lpa{1-e^{-\frac nk}}^kE_{n,k}
	&= \frac{n}{k^2}\sum_{0\le j\le k}
	\binom{k}{j}j^2 e^{-\frac nkj}
	= \frac nk\lpa{1+e^{-\frac nk}}^{k-2}
	e^{-\frac nk}\lpa{ke^{-\frac nk}+1}.
\end{align}
This error bound is not sharp for $\lambda\to\infty$, primarily due to the absence of the alternating factor $(-1)^j$ in the $O$-term. Nevertheless, it suffices for the purposes of our limit theorems. By the inequality 
\[
	\frac{1 + x}{1 - x} 
	= 1 + \frac{2x}{1 - x} \le e^{\frac{2x}{1-x}}
	\eqtext{for } 0 < x < 1,
\]
we obtain
\begin{align}\label{E:Enk-est0}
	E_{n,k} = O\Lpa{\frac{n}{k^2}\, 
	\lambda(\lambda+1)e^{\frac{2\lambda}{1-\lambda/k}}},
\end{align}
which then yields \eqref{E:Enk-est}. Since $k=\frac{n} {W(\frac{n}{\lambda})}$, we see that the dominant term in \eqref{E:Snk-central} is of order
\[
	\lpa{1-e^{-\frac nk}}^k=\Lpa{1-\frac{\lambda}k}^k
	= \exp\Lpa{-\lambda
	+O\Lpa{\frac{\lambda^2W(\frac n{\lambda})}{n}}},
\]
while the error term is bounded above by 
\begin{align*}
	E_{n,k}
	&= O\llpa{\frac{W(\frac n{\lambda})^2}{n}\, 
	\lambda(\lambda+1)e^{2\lambda}}.
\end{align*}
If $\lambda=O(1)$, then $E_{n,k}=o(1)$. On the other hand, if $\lambda\to\infty$ and satisfies $\lambda \le \lambda^+$ (defined in \eqref{E:k1-0}), then $e^{\lambda} = O(\sqrt{n} (\log n)^{-2-\ve})$, and
\[
	E_{n,k}
	= O\Lpa{\frac{\sqrt{n}}{(\log n)^{2+\ve}}
	\cdot \frac{(\log n)^{2-\ve}}{\sqrt{n}}}
	= O\lpa{(\log n)^{-2\ve}} =o(1),
\] 
uniformly for $\lambda\le \lambda^+$. Thus \eqref{E:appr1} is an asymptotic approximation for $1\le k\le k_1$.
\end{proof}

\begin{remark}
While uniformly valid for $1\le k\le k_1$, \eqref{E:Snk-central} is more useful when $\frac{n}{k^2}\to0$ or when $\frac{k}{\sqrt{n}} \to\infty$ because when $\lambda\to0$,
\[
	S(n,k)=\Lpa{1-\frac{\lambda}k}^k\left(1+E_{n,k}\right)
	= 1+O\lpa{\lambda + nk^{-2}\lambda} = 1+o(1).
\]
\end{remark}

\begin{remark}
When $ke^{-\frac{2n}k} =\frac{\lambda^2}k\to0$ or when $k\le 2k_0$ (see \eqref{E:k0}), then 
\[
	\Lpa{1-\frac{\lambda}k}^k = e^{-\lambda}(1+o(1)).
\]
The exponential form on the right-hand side (in approximating $S(n,k)$) appeared first in Laplace's 1783 memoir \cite[p.~337]{Laplace1786} (where his $i$ is our $n$ and his $n$ is our $k$), and later rederived by Cayley in \cite{Cayley1888} by a different formal approach; see also \cite{David1962, Laplace1812, Laplace1820} and Section~\ref{S:comparison} for more details.
\end{remark}

\subsection{Asymptotics of the Bell numbers $B_n$}
In this section, we derive an asymptotic approximation to $B_n$, beginning with the following uniform estimate.
\begin{lemma} Uniformly for $1\le k\le n$
\begin{align}\label{E:Snk-ua}
	\Stirling{n}{k}\le \frac{k^n}{k!}.
\end{align}
\end{lemma}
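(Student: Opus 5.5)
The plan is to prove \eqref{E:Snk-ua} combinatorially, avoiding the alternating sum \eqref{E:Snk-sum} entirely. Consider the set of all $k^n$ maps $f:\{1,\dots,n\}\to\{1,\dots,k\}$. Among these, the surjective ones number exactly $k!\Stirling{n}{k}$. Indeed, a surjection is determined by an (unordered) partition of $\{1,\dots,n\}$ into $k$ non-empty blocks, namely the fibres of $f$, together with a bijective labelling of the blocks by $\{1,\dots,k\}$. Conversely, each such pair gives a distinct surjection. Since surjections form a subset of all maps, we get $k!\Stirling{n}{k}\le k^n$, which is \eqref{E:Snk-ua}. In the notation \eqref{E:Snk-def}, this says $S(n,k)\le 1$.

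As a check, I would also note an analytic route consistent with the paper's viewpoint. We have $S(n,k)=\mathbb{P}(\text{all $k$ urns occupied})$ when $n$ balls are thrown uniformly and independently into $k$ urns. Expanding this probability by inclusion--exclusion gives exactly the normalized sum $\frac{k!}{k^n}\Stirling{n}{k}=\sum_{j}\binom kj(-1)^j(1-\frac jk)^n$. Being a probability, it is at most $1$. This probabilistic reading is the same counting argument, phrased so that it connects directly to the sieve formula \eqref{E:Snk-sum}.

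There is no real obstacle here. The only point needing care is the boundary cases: the range $1\le k\le n$ excludes $k=0$, and for $k>n$ both sides would still satisfy the inequality trivially. The main thing to get right is stating the bijection (partition, labelling) $\leftrightarrow$ surjection precisely, so that the factor $k!$ is justified: distinct labellings of the same partition give distinct maps, because the blocks are non-empty and pairwise disjoint.
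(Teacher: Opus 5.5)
Your proof is correct; it reaches the bound by counting, whereas the paper reaches it with generating functions. The paper writes $\Stirling{n}{k}=\frac{n!}{k!}[z^n](e^z-1)^k$ and observes that $(e^z-1)^k$ is coefficientwise dominated by $e^{kz}$: all coefficients are nonnegative and $e^z-1\preceq e^z$ termwise. Hence $\Stirling{n}{k}\le\frac{n!}{k!}[z^n]e^{kz}=\frac{k^n}{k!}$.

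The two arguments encode the same fact. Here $n![z^n]e^{kz}=k^n$ counts all maps $\{1,\dots,n\}\to\{1,\dots,k\}$, and $n![z^n](e^z-1)^k=k!\Stirling{n}{k}$ counts surjections. Replacing each factor $e^z$ by $e^z-1$ is exactly the requirement that every fibre be non-empty. So the coefficient domination is your inclusion of the set of surjections in the set of all maps, transported to exponential generating functions.

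Your version is self-contained, since it does not presuppose the generating function $(e^z-1)^k/k!$ of the Stirling numbers. It also directly yields the occupancy interpretation $S(n,k)=\mathbb{P}(\text{all $k$ urns occupied})$, which the paper uses later when discussing Bernstein's bound. The paper's version fits the dominating-series technique it reuses elsewhere, for instance in \eqref{E:coeff-g} and \eqref{E:tau-bound}. Your probabilistic remark matches the sieve/Bonferroni proof that the paper says is straightforward but does not spell out.
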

\begin{proof}
We have
\begin{align*}
	\Stirling{n}{k}
	= \frac{n!}{k!}[z^n](e^z-1)^k
	\le \frac{n!}{k!}[z^n]e^{kz}
	= \frac{k^n}{k!},
\end{align*}
where the symbol $[z^n]f(z)$ represents the coefficient of $z^n$ in the Taylor expansion of $f$. A proof by a sieve argument (or by Bonferroni inequality) is also straightforward.
\end{proof}

For convenience, we use, throughout this paper, the abbreviation $\omega_n=W(n)$.
\begin{lemma} \label{L:k-central}
Uniformly for $k=\mu_n+x\sigma_n$ with $x=o(n^{\frac16})$, where $\mu_n$ and $\sigma_n$ are defined in \eqref{E:mu-var}, 
\begin{align*}
	\begin{split}
	\frac{k^n}{k!}
	&= \frac{\sqrt{\omega_n}\,e^{(\omega_n-1+\frac1{\omega_n})n 
	-\frac12x^2}}{\sqrt{2\pi n}}
	\Bigl(1+\frac{(2\omega_n+1)x^3-3(\omega_n+1)x}
	{6\sigma_n(\omega_n+1)^2}
	+O\Lpa{\frac{1+x^6+\omega_n}{\sigma_n^2\omega_n^2}}\Bigr),\\
	\lpa{1-e^{-\frac nk}}^k
	&= e^{-1}\Lpa{1-\frac{x}{\sigma_n}
	+O\Lpa{\frac{1+x^2}{\sigma_n^2}}}.
	\end{split}
\end{align*}
\end{lemma}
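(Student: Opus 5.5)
The plan is a direct Taylor expansion around $\mu_n=n/\omega_n$, using Stirling's formula for $k!$ and the defining relation $\omega_n e^{\omega_n}=n$ to simplify.

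\textbf{First estimate.} Write
\[
k!=\sqrt{2\pi k}\,k^ke^{-k}\bigl(1+O(k^{-1})\bigr),
\]
so that
\[
\frac{k^n}{k!}=\frac{1}{\sqrt{2\pi k}}\exp\bigl(f(k)\bigr)\bigl(1+O(k^{-1})\bigr),
\qquad f(k):=(n-k)\log k+k .
\]
Next expand $f$ at $k=\mu_n$. The derivatives are
\[
f'(k)=\frac nk-\log k,\qquad
f''(k)=-\frac n{k^2}-\frac1k,\qquad
f'''(k)=\frac{2n}{k^3}+\frac1{k^2},\qquad
f^{(4)}(k)=O\Bigl(\frac n{k^4}\Bigr).
\]
At $\mu_n$ we have $n/\mu_n=\omega_n$ and $\log\mu_n=\log n-\log\omega_n=\omega_n$. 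Hence $f'(\mu_n)=0$, and $\mu_n$ is exactly the saddle point. Further,
\[
f(\mu_n)=(n-\mu_n)\omega_n+\mu_n=n\Bigl(\omega_n-1+\frac1{\omega_n}\Bigr),
\qquad
f''(\mu_n)=-\frac{\omega_n(\omega_n+1)}{n}=-\sigma_n^{-2}.
\]
With $h=x\sigma_n$, the quadratic term is therefore exactly $-\tfrac12x^2$. The cubic term is
\[
\frac{h^3}{6}\cdot\frac{\omega_n^2(2\omega_n+1)}{n^2}
=\frac{x^3(2\omega_n+1)}{6\sigma_n(\omega_n+1)^2},
\]
where I use $\sigma_n^3=\sigma_n n/(\omega_n(\omega_n+1))$. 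The quartic remainder is
\[
O\Bigl(\frac{h^4\omega_n^4}{n^3}\Bigr)=O\Bigl(\frac{x^4}{\sigma_n^2\omega_n^2}\Bigr)\cdot O(1)
\]
after checking the powers of $\omega_n$.

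The prefactor gives
\[
(2\pi k)^{-1/2}=(2\pi\mu_n)^{-1/2}\Bigl(1+\frac{x\sigma_n}{\mu_n}\Bigr)^{-1/2}
=\frac{\sqrt{\omega_n}}{\sqrt{2\pi n}}\Bigl(1-\frac{x\sigma_n}{2\mu_n}+\cdots\Bigr).
\]
Here $\sigma_n/\mu_n=\omega_n/\bigl(\sigma_n\omega_n(\omega_n+1)\bigr)$. Writing $\tfrac{x\sigma_n}{2\mu_n}=\tfrac{3(\omega_n+1)x}{6\sigma_n(\omega_n+1)^2}$ produces the linear correction term. The term $O(1/k)$ from Stirling's formula is $O(\omega_n/n)=O(1/(\sigma_n^2\omega_n))$, which explains the $\omega_n$ in the error term.

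Finally, exponentiate: $e^{u}=1+u+O(u^2)$ with $u=O\bigl((|x|+|x|^3)/\sigma_n\bigr)$. This is $o(1)$ because $x=o(n^{1/6})$ implies $x^3=o(\sigma_n)$ up to logarithmic factors. The square $u^2$ and the quartic remainder are absorbed into $O\bigl((1+x^6)/(\sigma_n^2\omega_n^2)\bigr)$. Some care is needed here, since the $x^3$ coefficient is of size $1/(\sigma_n\omega_n)$ while the $x$ coefficient is of size $1/(\sigma_n\omega_n)$ too.

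\textbf{Second estimate.} Here $\lambda=ke^{-n/k}$ and, by the computation already carried out in the proof of Proposition~\ref{P:R1},
\[
\bigl(1-e^{-n/k}\bigr)^k=\exp\Bigl(-\lambda+O\Bigl(\frac{\lambda^2}{k}\Bigr)\Bigr).
\]
It then suffices to expand $\log\lambda=\log k-n/k$ at $\mu_n$. At $\mu_n$ it vanishes, so $\lambda(\mu_n)=1$. Its derivative is $1/k+n/k^2$, which at $\mu_n$ equals $\omega_n(\omega_n+1)/n=\sigma_n^{-2}$. Hence
\[
\log\lambda=\frac{x\sigma_n}{\sigma_n^2}+O\Bigl(\frac{h^2n}{k^3}\Bigr)=\frac{x}{\sigma_n}+O\Bigl(\frac{x^2}{\sigma_n^2}\Bigr),
\]
so $\lambda=1+x/\sigma_n+O\bigl(x^2/\sigma_n^2\bigr)$. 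Combining, $e^{-\lambda}=e^{-1}\bigl(1-x/\sigma_n+O((1+x^2)/\sigma_n^2)\bigr)$, and the term $\lambda^2/k=O(1/k)$ is absorbed into the error.

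\textbf{Main obstacle.} The delicate part is the bookkeeping of powers of $\omega_n$ in the error terms of the first estimate. These include the quartic Taylor remainder, the cross terms $x\cdot x^3$ arising in $u^2$, and the Stirling correction. All of them must fit into $O\bigl((1+x^6+\omega_n)/(\sigma_n^2\omega_n^2)\bigr)$. I would bound each one explicitly using $\sigma_n^2\asymp n/\omega_n^2$.
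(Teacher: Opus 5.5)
Your proposal is correct and is essentially the paper's proof: Stirling's formula, the substitution $k=\mu_n+x\sigma_n$, and the identity $\log\omega_n=\log n-\omega_n$. You organize it as a Taylor expansion of $(n-k)\log k+k$ about the saddle point $\mu_n$, and of $\log\lambda$ in the second estimate, and your coefficients, prefactor correction and Stirling term all check out.

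One precision: state the exponent bound as $u=O\bigl((|x|+|x|^3)/(\sigma_n\omega_n)\bigr)$ rather than $O\bigl((|x|+|x|^3)/\sigma_n\bigr)$. The relation $\sigma_n\omega_n\asymp\sqrt n$ is exactly what makes $u=o(1)$ for $x=o(n^{1/6})$, and it is what places $u^2$ inside the stated error term. Your hedge ``$x^3=o(\sigma_n)$ up to logarithmic factors'' is not true as written.
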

\begin{proof}
By Stirling's formula,
\[
	n\log k -\log k!
	= \Lpa{n-k-\frac12}\log k +k -\frac12\log 2\pi
	-\frac1{12k}+O\lpa{k^{-2}}.
\]
Substituting $k=\mu_n+x\sigma_n$ and using the relation $\log W(n) = \log n-W(n)$, we obtain the above asymptotic expansion for $\frac{k^n}{k!}$ after routine expansions and simplifications. The proof of the second expansion proceeds similarly.
\end{proof}

\begin{prop}[\cite{deBruijn1981,Moser1955,Szekeres1957}] 
\label{P:Bell-asympt}
For large $n$ (with the convention $\omega_n = W(n)$)
\begin{align}\label{E:Bn-asympt}
  B_n = \frac{e^{(\omega_n-1+\frac1{\omega_n})n-1}}
	{\sqrt{\omega_n+1}}\lpa{1+O\lpa{n^{-1}(\log n)^2}}.
\end{align}	
\end{prop}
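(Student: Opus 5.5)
We will write $B_n=\sum_{1\le k\le n}\Stirling{n}{k}$ and split the sum into a central part $|k-\mu_n|\le K:=\sigma_n(\log n)^{1/2}\cdot c$ (say $K=\sigma_n\sqrt{4\log n}$) and the two tails. In the central range, $\frac nk-W(n)=O(n^{-\frac12}(\log n)^{2})$, so $\lambda=ke^{-n/k}=1+o(1)$ and $k$ lies well inside $1\le k\le k_1$. Proposition~\ref{P:R1} therefore gives $\Stirling{n}{k}=\frac{k^n}{k!}(1-e^{-n/k})^k(1+E_{n,k})$. Since $\lambda=O(1)$ and $k\asymp n/\log n$, we have $E_{n,k}=O(n/k^2)=O(n^{-1}(\log n)^2)$, which is exactly the claimed error.

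Into this we insert the two expansions of Lemma~\ref{L:k-central}, which apply because $x=O(\sqrt{\log n})=o(n^{1/6})$. Multiplying them out, each summand becomes
\[
\frac{\sqrt{\omega_n}\,e^{(\omega_n-1+\frac1{\omega_n})n-1}}{\sqrt{2\pi n}}\,e^{-\frac12x^2}\Bigl(1+\frac{P_3(x)}{\sigma_n}+O\Bigl(\frac{1+x^6}{\sigma_n^2}+\frac{\log^2 n}{n}\Bigr)\Bigr),
\]
where $P_3(x)$ is an odd cubic polynomial: the term $-x/\sigma_n$ from the second factor combines with the cubic from the first. We then sum over integers $k$, i.e.\ over $x\in(\mathbb{Z}-\mu_n)/\sigma_n$ with spacing $1/\sigma_n$. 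By Euler--Maclaurin (or Poisson summation, since the Gaussian is smooth and rapidly decaying),
\[
\sum_k e^{-\frac12x_k^2}x_k^j=\sigma_n\int x^je^{-\frac12x^2}\dd x+O(\sigma_n^{-M}).
\]
The odd term $P_3$ then integrates to zero up to negligible error, and the sum equals $\sigma_n\sqrt{2\pi}\,(1+O(\sigma_n^{-2}))$. Since $\sigma_n^{-2}\asymp(\log n)^2/n$, this gives
\[
\sum_{\text{central}}=\sigma_n\sqrt{\frac{\omega_n}{n}}\,e^{(\omega_n-1+\frac1{\omega_n})n-1}\bigl(1+O(n^{-1}\log^2n)\bigr),
\]
and $\sigma_n\sqrt{\omega_n/n}=1/\sqrt{\omega_n+1}$, which is the main term of \eqref{E:Bn-asympt}. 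One point needs care: truncating the Gaussian sums at $|x|\le\sqrt{4\log n}$ costs only $O(n^{-2})$.

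For the tails we use the uniform bound \eqref{E:Snk-ua}, $\Stirling{n}{k}\le k^n/k!$. The function $g(k)=n\log k-\log k!$ is concave in $k$, with maximum near $\mu_n$ and second derivative about $-1/\sigma_n^2$ there; the latter follows from $g''(k)\approx -n/k^2-1/k$. At $|k-\mu_n|=K$, Lemma~\ref{L:k-central} shows that $k^n/k!$ is smaller than the peak by a factor $e^{-2\log n}$. By concavity, $k^n/k!$ decays at least geometrically beyond that point, with ratio $\exp(-K/\sigma_n^2\cdot(1+o(1)))$. Hence the whole tail sum is $O(\sigma_n^2/K\cdot n^{-2}\cdot\text{peak})$. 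This is $o(n^{-1})$ relative to the main term $\sigma_n\cdot$peak.

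The main obstacle is bookkeeping the error in the central sum to the sharp order $n^{-1}(\log n)^2$ rather than $n^{-1/2}$. This requires verifying two things. First, the $1/\sigma_n$ correction is genuinely odd in $x$ and cancels; since Lemma~\ref{L:k-central} has no even term at order $1/\sigma_n$, this holds. Second, the discretization error of the lattice sum (the offset of $\mu_n$ from the integers) is superpolynomially small. Both checks are routine but need to be done explicitly.
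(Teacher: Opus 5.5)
Your proposal is correct and follows essentially the same route as the paper. The tails are handled by $\Stirling{n}{k}\le k^n/k!$ with unimodality and Lemma~\ref{L:k-central}, and the central range by Proposition~\ref{P:R1} and Lemma~\ref{L:k-central}, cancellation of the odd $1/\sigma_n$ term, and a sum-to-integral comparison, with $E_{n,k}=O(\sigma_n^{-2})$ giving the $O(n^{-1}(\log n)^2)$ error. The differences are minor: the paper cuts at $|k-\mu_n|=\sigma_n^{5/4}$ and bounds the tails crudely by $n\max\{k_\pm^n/k_\pm!\}$, whereas you cut at $\sigma_n\sqrt{4\log n}$ and use log-concavity (even the crude $n\max$ bound would suffice at your cutoff, giving relative error $O(1/(n\sigma_n))$).
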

Applying the saddle-point method to Cauchy's integral representation yields the stronger expansion:
\begin{align}\label{E:Bn}
  B_n = \frac{e^{(\omega_n-1+\omega_n^{-1})n-1}}
	{\sqrt{\omega_n+1}}\Lpa{1-\frac{\omega_n^2(2\omega_n^2 
	+ 7\omega_n + 10)}
	{24(\omega_n + 1)^3n}
	+O\lpa{n^{-2}(\log n)^2}};
\end{align}
see \cite[\S 6.2]{deBruijn1981}, \cite{Dou2022, Moser1955, Szekeres1957}. This indicates that the error term in \eqref{E:Bn-asympt}, which includes an extra $\log n$ factor, is suboptimal due to the crudeness of our analysis. For an alternative application of the saddle-point method based on Dobi\'nski's formula, see \cite[\S 6.3]{deBruijn1981}.

\begin{proof}
Let $k_\pm := \mu_n \pm \sigma_n^{\frac54}$, where $(\mu_n, \sigma_n^2)$ are given in \eqref{E:mu-var}. Then we split the sum over $k$ into three parts: 
\begin{align*}
	B_n &= \sum_{1\le k\le n}\Stirling{n}{k}
	= \llpa{\sum_{1\le k\le k_-}
	+\sum_{k_-<k<k_+}
	+\sum_{k_+\le k\le n}}\Stirling{n}{k}.
\end{align*}
Observe that $k\mapsto \frac{k^n}{k!}$ is unimodal for fixed $n$ with a unique peak at $k=\frac{n}{W(n)}(1+o(1))$. Thus, by \eqref{E:Snk-ua} and Lemma~\ref{L:k-central} with $x=\sigma_n^{\frac14}$,
\begin{align}
	\llpa{\sum_{1\le k\le k_-}
	+ \sum_{k_+\le k\le n}}\Stirling{n}{k}
	&\le \llpa{\sum_{1\le k\le k_-}
	+ \sum_{k_+\le k\le n}}\frac{k^n}{k!}\nonumber\\
	&\le n\max\biggl\{\frac{k_-^n}{k_-!},
	\frac{k_+^n}{k_+!}\biggr\}\nonumber \\
	&= O\lpa{\sqrt{n\omega_n}\, 
	e^{(\omega_n-1+\omega_n^{-1})n
	-\frac12\sqrt{\sigma_n}}},
	\label{E:k-out}
\end{align}
which is asymptotically negligible compared to the $O$-term on the right-hand side of \eqref{E:Bn-asympt}.

For the middle range $k_-<k<k_+$, the crucial observation is that
\emph{the interval $[k_-,k_+]$ is contained within $[1,k_1]$ (see
\eqref{E:k1}) for sufficiently large $n$} because
\begin{align*}
	k_+ 
	&=\frac{n}{\omega_n}\Lpa{1+\frac{\omega_n^{\frac38}}
	{n^{\frac38}(\omega_n+1)^{\frac58}}}
    =\frac{n}{\log n -\log \log n
	+O\left(\frac{\log \log n}{\log n}\right)}\le k_1,
\end{align*}
for large $n$. We can thus apply \eqref{E:Snk-central}. This use, together with $k=\mu_n+x\sigma_n$ and Lemma~\ref{L:k-central}, gives
\begin{align}\label{E:Snk-middle}
	\Stirling{n}{k}
	&= \frac{\sqrt{\omega_n}\,e^{(\omega_n-1+\omega_n^{-1})n-1}}
	{\sqrt{n}}\cdot\frac{e^{-\frac12 x^2}}{\sqrt{2\pi}}
	\llpa{1+\frac{p_1(\omega_n,x)}{\sigma_n}
	+O\Lpa{\frac{1+x^6}{\sigma_n^2}}},
\end{align}
uniformly for $x=o(\sigma_n^{\frac13})$, where 
\[
  p_1(\omega_n,x) 
	:= \frac{x((2\omega_n+1)x^2-3(\omega_n+1)(2\omega_n+3))}
	{6(\omega_n+1)^2}.
\] 
Here the exact form of $p_1$ is immaterial; what matters is that it is an odd polynomial in $x$. From \eqref{E:Enk-est} and the expression of $k$, we have $E_{n,k} = O(\sigma_n^{-2})$. Summing over $k$ in the range $k_-<k<k_+$, approximating the sum by an integral and extending the integral limits to infinity (introducing only asymptotically negligible errors), we obtain an extra factor of $\sigma_n$, leading to \eqref{E:Bn-asympt}. For similar arguments, see \cite[\S~5.1]{Odlyzko1995}.
\end{proof}

\subsection{Asymptotic approximations to the mean and the variance}

Applying the same analysis, we can derive asymptotic approximations for the mean and variance of the number of blocks in a random set partition (where all $B_n$ partitions of $n$ elements are equally likely).

\begin{thm}
\label{T:mean-var} For large $n$
\begin{equation}\label{E:mu-var2}
	\begin{split}
		\mathbb{E}(X_n) 
		&= \mu_n + O(\omega_n),\\
		\mathbb{V}(X_n) 
		&= \sigma_n^2 + O(\omega_n^2),
	\end{split}
\end{equation}
where $(\mu_n, \sigma_n^2)$ are given in \eqref{E:mu-var}.
\end{thm}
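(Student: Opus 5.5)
The plan is to repeat the three-range splitting used in the proof of Proposition~\ref{P:Bell-asympt}, now weighting each Stirling number by $k$ and by $k^2$. Write $k_\pm := \mu_n \pm \sigma_n^{\frac54}$ as before and put $k = \mu_n + x\sigma_n$. Then
\[
	\mathbb{E}(X_n) - \mu_n = \frac{\sigma_n}{B_n}\sum_{k} x\Stirling{n}{k},
	\qquad
	\mathbb{E}\bigl((X_n-\mu_n)^2\bigr) = \frac{\sigma_n^2}{B_n}\sum_{k} x^2\Stirling{n}{k}.
\]
It therefore suffices to show two things:
\[
	\frac1{B_n}\sum_k x\Stirling{n}{k} = O\Lpa{\frac{\omega_n}{\sigma_n}}
	\eqtext{and}
	\frac1{B_n}\sum_k x^2\Stirling{n}{k} = 1 + O\Lpa{\frac{\omega_n^2}{\sigma_n^2}}.
\]
These give $\mathbb{E}(X_n) = \mu_n + O(\omega_n)$ and $\mathbb{V}(X_n) = \mathbb{E}(X_n-\mu_n)^2 - (\mathbb{E}X_n-\mu_n)^2 = \sigma_n^2 + O(\omega_n^2)$.

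\textbf{Tails.} For $k \le k_-$ and $k \ge k_+$, use \eqref{E:Snk-ua} together with the unimodality of $k^n/k!$, exactly as in \eqref{E:k-out}. Since $|x| \le n/\sigma_n$ trivially, the weights contribute at most a polynomial factor $O(n^2)$. The tail sums are then $O\bigl(n^{3}\sqrt{\omega_n}\,e^{(\omega_n-1+\omega_n^{-1})n-\frac12\sqrt{\sigma_n}}\bigr)$. Divided by $B_n$ from \eqref{E:Bn-asympt}, this is $O(e^{-c\sqrt{\sigma_n}}\mathrm{poly}(n))$, which is negligible.

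\textbf{Central range.} For $k_- < k < k_+$ we have $[k_-,k_+] \subset [1,k_1]$, so \eqref{E:Snk-middle} applies:
\[
	\Stirling{n}{k} = C_n \frac{e^{-x^2/2}}{\sqrt{2\pi}}\Bigl(1 + \frac{p_1(\omega_n,x)}{\sigma_n} + O\Bigl(\frac{1+x^6}{\sigma_n^2}\Bigr)\Bigr),
\]
where $C_n$ is the common prefactor and $p_1$ is odd. We sum the weights $x^j$, $j = 0,1,2$, against this expression over $k$. The lattice spacing in $x$ is $1/\sigma_n$, so we approximate the sums by $\sigma_n$ times integrals over $\mathbb{R}$. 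The resulting moments are $1$, $\frac{1}{\sigma_n}\int x\,p_1 \varphi$, and $1 + \frac1{\sigma_n}\int x^2 p_1\varphi$, each up to $O(\sigma_n^{-2})$ relative errors. Oddness of $p_1$ kills the $1/\sigma_n$ correction in the zeroth and second moments. In the first moment it leaves $O(\|p_1\|/\sigma_n)$. The coefficients of $p_1$ are $O(1)$, since $(2\omega_n+1)/(\omega_n+1)^2 = O(1/\omega_n)$ and $(\omega_n+1)(2\omega_n+3)/(\omega_n+1)^2 = O(1)$. This already gives $\mathbb{E}X_n - \mu_n = O(1)$, better than the claim. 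We then divide by the zeroth-moment sum, which equals $B_n$ up to the negligible tails. This is the same computation as for \eqref{E:Bn-asympt}, so the ratio cancels $C_n$.

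\textbf{Main obstacle.} The delicate step is the error control in the sum-to-integral passage and in $E_{n,k}$. Euler--Maclaurin, or simply the fact that the summand is smooth in $x$ with derivative of relative size $O(1)$ per unit $x$, gives errors $O(\sigma_n^{-2})$ or better after Gaussian integration. Here the Gaussian integration absorbs the growth of $x^6 \cdot x^2$ up to $|x| \le \sigma_n^{1/4}$. The point to check is that in \eqref{E:Snk-middle} the $O$-terms coming from Lemma~\ref{L:k-central} carry extra $\omega_n$ factors, such as $(1+x^6+\omega_n)/(\sigma_n^2\omega_n^2)$, and that $E_{n,k} = O(\omega_n^2/n) = O(\omega_n/\sigma_n^2)$. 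Also, \eqref{E:Snk-middle} is stated only for $x = o(\sigma_n^{1/3})$, while the central range reaches $|x| \le \sigma_n^{1/4}$. This is fine because $\sigma_n^{1/4} = o(\sigma_n^{1/3})$. Tracking these factors crudely yields relative errors $O(\omega_n^2/\sigma_n^2)$ in the second moment, hence the stated $O(\omega_n^2)$ for the variance. I would not try to sharpen this, matching the paper's acknowledged crudeness.
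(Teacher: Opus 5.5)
Your proposal is correct and is essentially the paper's argument: the same split at $k_\pm=\mu_n\pm\sigma_n^{5/4}$, tails via \eqref{E:Snk-ua} and unimodality as in \eqref{E:k-out}, and the central range via \eqref{E:Snk-middle} with a sum-to-integral passage and the oddness of $p_1$. Your exact centering via $\sum_k\Stirling{n}{k}=B_n$ even gives $\mathbb{E}(X_n)-\mu_n=O(1)$, which is sharper than stated; the paper instead sums $(\mu_n+x\sigma_n)\Stirling{n}{k}$ directly and divides by \eqref{E:Bn-asympt}. One caution: use the Euler--Maclaurin (trapezoidal) form of the sum-to-integral step, since a bare first-derivative Riemann-sum bound only gives relative error $O(\sigma_n^{-1})$, which is $O(\sigma_n)$ in the variance.
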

Finer approximations by other approaches are provided in Appendix~\ref{S:App1}.
\begin{proof}
The proof follows closely \emph{mutatis mutandis} that of \eqref{E:Bn-asympt}: defining \(x_{n,k}=(k-\mu_n)/\sigma_n\) then \(k=\mu_n+x_{n,k}\sigma_n\), therefore
\begin{align*}
	\mathbb{E}(X_n) 
	&= \frac1{B_n}\sum_{1\le k\le n}k\Stirling{n}{k}\\
	&= \frac1{B_n}\sum_{k_-< k< k_+}(\mu_n+x_{n,k}\sigma_n)
	\Stirling{n}{k} + O\lpa{n^{\frac32} 
	e^{-\frac12\sqrt{\sigma_n}}},
\end{align*}
which then yields the approximation for the mean in \eqref{E:mu-var2}. Similarly, 
\begin{align*}
	\mathbb{V}(X_n) 
	&= \mathbb{E}(X_n-\mu_n)^2 - (\mathbb{E}(X_n-\mu_n))^2\\
	&= \frac1{B_n}\sum_{k_-< k< k_+}
	(k-\mu_n)^2\Stirling{n}{k} + O(\omega_n^2)\\
	&= \frac1{B_n}\sum_{k_-< k< k_+}(x_{n,k}\sigma_n)^2
	\Stirling{n}{k} + O\lpa{n^{\frac52} 
	e^{-\frac12\sqrt{\sigma_n}}},
\end{align*}
from which we deduce the approximation for the variance in \eqref{E:mu-var2}. 
\end{proof}

It is also straightforward to extend the same calculations to all central moments $\mathbb{E}(X_n-\mu_n)^m$:
\[
	\mathbb{E}(X_n-\mu_n)^m
	= \begin{cases}
		O(\sigma_n^{m-1}),&\text{if $m$ is odd};\\
		\frac{m!}{2^{m/2}(m/2)!}\, \sigma_n^m
		(1+o(1)), &\text{if $m$ is even},
	\end{cases}
\]
implying the convergence of all moments of $\frac{X_n-\mu_n} {\sigma_n}$ to those of the standard normal, which in turn also leads to a proof of the CLT \eqref{E:clt} by the method of moments. The key difference is that this approach does not lead to a convergence rate for the asymptotic normality \eqref{E:clt}.

\subsection{Proof of the LLT}

We now prove Theorem~\ref{T:llt}, first for the LLT~\eqref{E:llt} for $\Stirling{n}{k}$. By the uniform bound \eqref{E:Snk-ua}, \eqref{E:k-out} and Lemma~\ref{L:k-central}, we have
\[
	\sup_{|k-\mu_n|\ge \sigma_n^{\frac43}}
    \biggl|\mathbb{P}(X_n = k)
	-\frac{e^{-\frac{(k-\mu_n)^2}
	{2\sigma_n^2}}}{\sqrt{2\pi}\,\sigma_n}\biggr|
	= O\lpa{\sigma_n^{-1}e^{-\frac12\sqrt{\sigma_n}}}
	= o(\sigma_n^{-2}).
\]
On the other hand, by \eqref{E:Bn-asympt}, \eqref{E:Snk-middle} and Lemma~\ref{L:k-central}, we see that when $k=\tr{\mu_n+x\sigma_n}$
\begin{align}
	\mathbb{P}(X_n = k)
	= \frac1{B_n}\Stirling{n}{k}
	= \frac{e^{-\frac12x^2}}{\sqrt{2\pi}\,\sigma_n}
	\llpa{1+O\llpa{\frac{|x|+|x|^3}{\sigma_n}}}
	\Lpa{1+O\Lpa{\frac{(\log n)^2}{n}}},
\end{align}
uniformly for $x=o(\sigma_n^{\frac13})$, and, particularly, for $k_-<k<k_+$. This proves Theorem~\ref{T:llt}. The proof for the CLT \eqref{E:clt} is similar.

\section{Asymptotic nature of the sieve formula, II}

The preceding analysis is simple but limited by the \emph{range of uniformity} in $k$ and the \emph{degree of precision}. Before extending along these two directions, we examine more closely the alternating nature of the sum \eqref{E:Snk-sum} in this section. 

Since the range $k\le k_0$, equivalently $\lambda=o(1)$ (see \eqref{E:k0}) in the earlier setting, has already been addressed, we next focus on the transition and larger range; in particular $\lambda>1$, which corresponds to $k\ge n/W(n)$. For this range of $k$, the \emph{exponential cancellation effect} resulting from the alternating factor $(-1)^j$ becomes more pronounced: \emph{the largest binomial terms grow exponentially, while their alternating sum is of much smaller order}.

\subsection{Numerical instability}
\label{sec:numerical}

For convenience, we consider the normalized sum
\begin{align}\label{E:Snk-sum0}
	S(n,k) 
	:= \sum_{0\le j\le k}(-1)^jb_{n,k}(j),
	\eqtext{where}
	b_{n,k}(j) := \binom{k}{j}\Lpa{1-\frac jk}^n.
\end{align}
We first look at the numerics of $S(n,k)$ with $n=20$ and $k=11$; see Figure~\ref{F:bj}.
\begin{figure}[!ht]
\centering
\begin{tikzpicture} 
	\node[anchor=south west, inner sep=0] (image) at (0,0)
	{\includegraphics[width=0.25\textwidth]{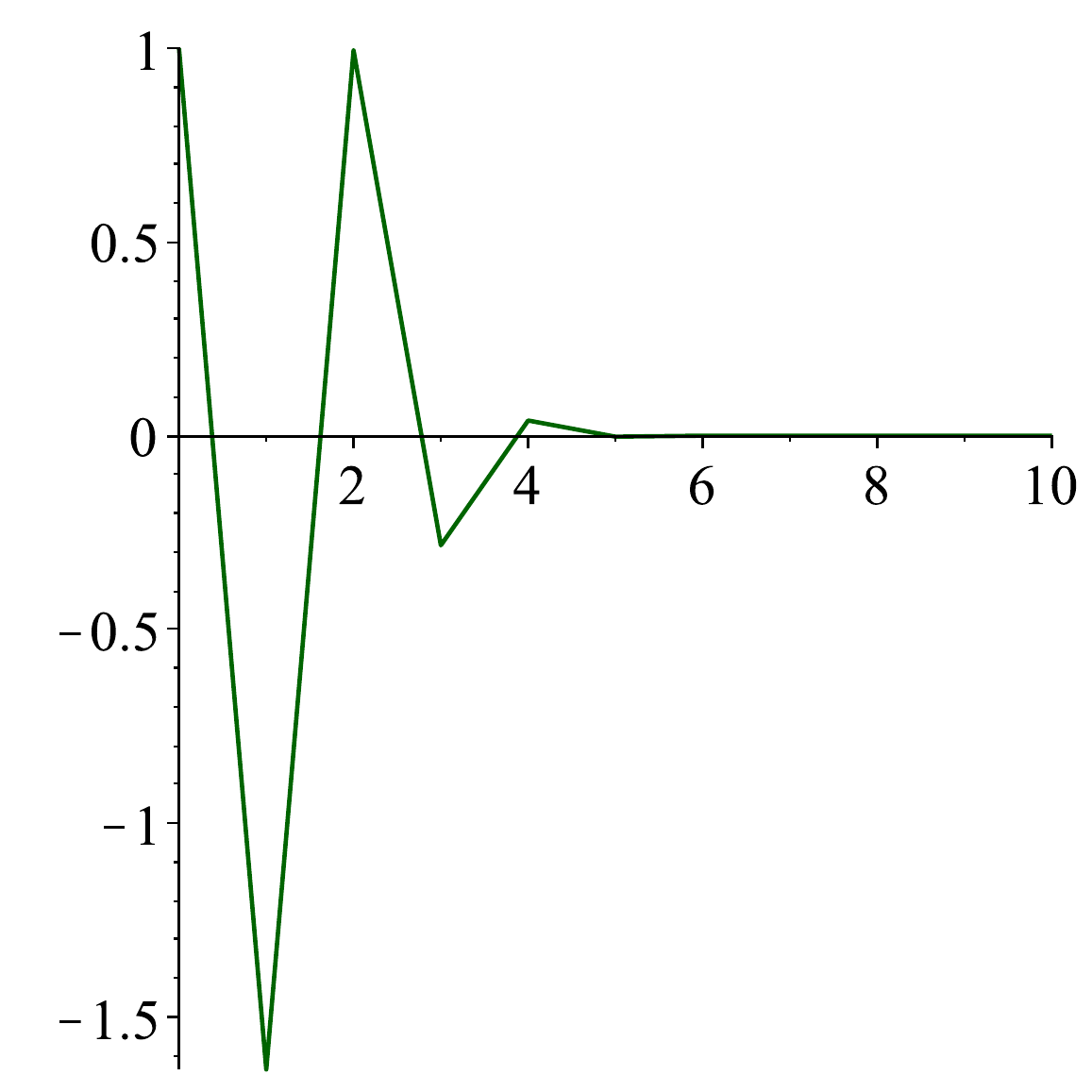}\qquad
	\includegraphics[width=0.25\textwidth]{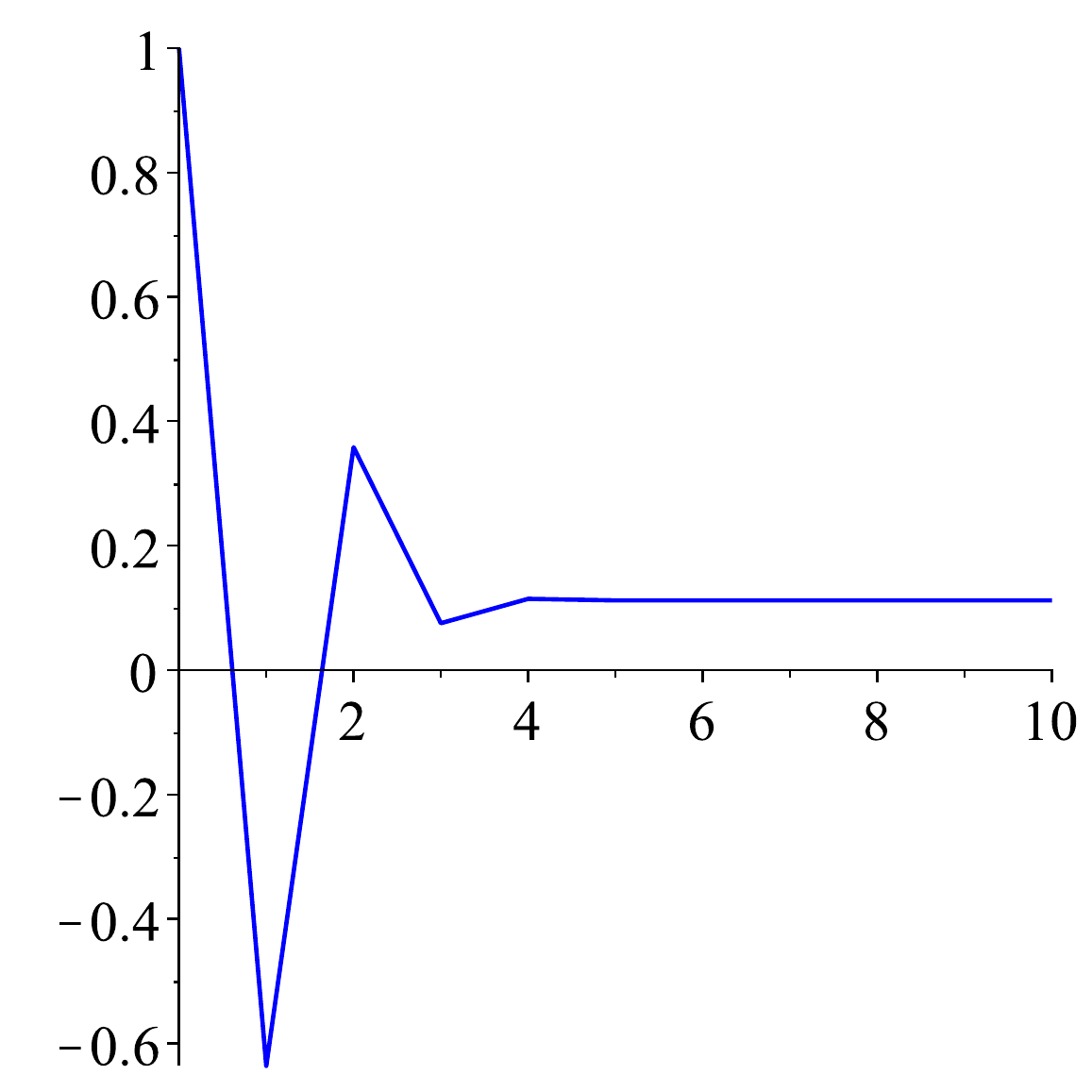}};		
	\begin{scope}[x={(image.south east)}, y={(image.north west)}]
	
	\node at (0.3,0.75) {\small$(-1)^jb_{n,k}(j)$};
	\node at (0.81,0.75) {\small$\sum\limits_{0\le i\le j}
	(-1)^ib_{n,k}(i)$};
	\node at (0.83,0.2) {\small$\frac{k!}{k^n}
	\Stirling{n}{k}\approx 0.1127$};
	\end{scope}
\end{tikzpicture}
\begin{footnotesize}
\begin{tabular}{cccccccc}
$j$ & $0$ & $1$ & $2$ & $3$ & $4$ & $\cdots$
& $\sum_{0\le j\le k}(-1)^j b_{n,k}(j)$ \\ \hline
$b_{n,k}(j)$ & $1$ & $1.6351$ & $0.99394$ 
& $0.28277$ & $0.039140$ & $\cdots$ & $0.1127$
\end{tabular}
\end{footnotesize}
\caption{Fluctuations of $(-1)^jb_{n,k}(j) = \binom{k}{j}(-1)^j(1 - \frac{j}{k})^n$ for $n=20$, $k=11$, and the corresponding partial sums.}\label{F:bj}
\end{figure}

While $S(n,k)\approx0.1127$, the first 4 terms in the sum \eqref{E:Snk-sum0} are all larger than the resulting sum. This phenomenon becomes even more noticeable for larger values of $n$ and $k$. For instance, when $n=200$ and $k=70$, we have $S(n,k)\approx0.01149$ while $b_{n,k}(j)>0.01149$ for all $0\le j\le 10$. The largest term, $b_{n,k}(3)\approx 8.5833$, is over 746 times greater than the value of $S(n,k)$.

\subsection{Bonferroni inequality and unimodality}

The above type of \emph{numerical instability} can be further examined through the use of the Bonferroni inequality, a consequence of the inclusion-exclusion principle, which states that
\begin{equation}\label{E:bonf-ineq}
	\left|S(n,k)
	-\sum_{0\le l<j}(-1)^lb_{n,k}(l)\right|
	\le b_{n,k}(j),
\end{equation}
for $j=0,1,\dots$. For example, for $(n,k)=(20,11)$, at least four terms ($j=4$) are required for the error to fall below the 
value of the resulting sum. In the case $(n,k)=(200,70)$, using $j=11$ in \eqref{E:bonf-ineq} results in an absolute error less than $b_{n,k}(11)\approx0.003062$.

These numerical observations can be further analyzed more precisely from an analytic viewpoint. Let $\lambda_0 := \frac{b_{n,k}(1)}{b_{n,k}(0)} =k\lpa{1-\frac1k}^n$.

\begin{prop} If $\lambda_0<1$, then $b_{n,k}(0)>b_{n,k}(1) >\cdots>b_{n,k}(k)$ and \eqref{E:Snk-sum} is an asymptotic expansion. If $\lambda_0>1$, then $\{b_{n,k}(j)\}$ is unimodal; let $j_\star$ be the (unique) index where b attains its maximum (equivalently where $\frac{b(j+1)}{b(j)}$ crosses 1), then $j_\star\sim \lambda$ if \(\lambda \to \infty\) and \(k=o(n)\). Moreover, for $k\le \frac{(2-\ve)n}{\log n}$ and \(\lambda\to \infty\),
\begin{align}\label{E:max-bnk-j}
	\max_{0\le j\le k}b_{n,k}(j)
	= \frac{e^\lambda}{\sqrt{2 \pi \lambda}}
	\left(1+o\left(1\right)\right).
\end{align}	
\end{prop}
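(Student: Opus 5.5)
The plan is to work entirely with the ratio of consecutive terms,
\[
	r(j) := \frac{b_{n,k}(j+1)}{b_{n,k}(j)}
	= \frac{k-j}{j+1}\Lpa{1-\frac{1}{k-j}}^n
	\qquad(0\le j<k),
\]
using the identity $\frac{k-j-1}{k}\big/\frac{k-j}{k} = 1-\frac1{k-j}$. Here $r(0)=\lambda_0$. Both factors $\frac{k-j}{j+1}$ and $\lpa{1-\frac1{k-j}}^n$ are strictly decreasing in $j$ (for $j\le k-2$; at $j=k-1$ we have $r=0$). Hence $r(j)$ is strictly decreasing, and every structural claim follows from this monotonicity.

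If $\lambda_0<1$, then $r(j)<1$ for all $j$, so the terms decrease strictly. The Bonferroni inequality \eqref{E:bonf-ineq} then bounds each truncation error by the first omitted term. When moreover $\lambda_0\to0$, we have $r(j)\le\lambda_0=o(1)$, so successive terms are of smaller order; this is the sense in which \eqref{E:Snk-sum} is an asymptotic expansion. If $\lambda_0>1$, the strict decrease of $r$ gives a unique crossing index, $j_\star:=\min\{j: r(j)<1\}$, and therefore unimodality (ties $r(j)=1$ are nongeneric, and I would treat them as a measure-zero nuisance).

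To locate $j_\star$, I would estimate $r(j)$ for $j=o(k)$:
\[
	\Lpa{1-\frac1{k-j}}^n=\exp\Lpa{-\frac{n}{k}-\frac{nj}{k^2}+O\Lpa{\frac{nj^2}{k^3}+\frac{n}{k^2}}},
\]
so that
\[
	r(j)=\frac{\lambda}{j+1}\,e^{-nj/k^2}\lpa{1+O(j/k)}\lpa{1+O(\cdot)}.
\]
With $j=\lambda(1+\delta)$ we get $nj/k^2=\rho\lambda/k\cdot(1+\delta)$. Since $\lambda\rho/k = \rho e^{-\rho}\to0$ when $k=o(n)$ (so that $\rho\to\infty$), this exponential correction is $o(1)$ for $j=O(\lambda)$. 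Hence $r(\lambda(1\pm\delta))\approx(1\pm\delta)^{-1}$ crosses $1$ near $j=\lambda$, which gives $j_\star\sim\lambda$. I also need $\lambda=o(k)$, which holds because $\lambda/k=e^{-\rho}\to0$.

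For \eqref{E:max-bnk-j}, I would evaluate $b_{n,k}(j)$ at $j=j_\star=\lambda+O(1+\lambda\eta)$ by Stirling's formula:
\[
	\binom kj=\frac{k^j}{j!}\exp\Lpa{-\frac{j^2}{2k}+O\Lpa{\frac jk+\frac{j^3}{k^2}}},
	\qquad
	\Lpa{1-\frac jk}^n=\exp\Lpa{-\frac{nj}{k}-\frac{nj^2}{2k^2}+O\Lpa{\frac{nj^3}{k^3}}}.
\]
Combining these with $k^je^{-nj/k}=\lambda^j$, the product becomes $\frac{\lambda^j}{j!}\exp(-\frac{j^2}{2k}(1+\rho)+\dots)$. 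The factor $\lambda^j/j!$ maximized near $j=\lambda$ gives $\frac{e^\lambda}{\sqrt{2\pi\lambda}}(1+O(\lambda^{-1}))$. Since $\lambda^j/j!$ is flat to relative order $O(1/\lambda)$ over $|j-\lambda|=O(1)$, and to $e^{-\delta^2\lambda/2}$ more generally, a shift of $j_\star$ from $\lambda$ costs only a factor $e^{-O((j_\star-\lambda)^2/\lambda)}$.

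The main obstacle is controlling the correction $\frac{\lambda^2(1+\rho)}{2k}$ and keeping $j_\star-\lambda=o(\sqrt\lambda)$. Since $\lambda^2\rho/k=\rho\,ke^{-2\rho}$, the correction tends to $0$ exactly when $\rho-\frac12\log k-\frac12\log\rho\to\infty$. For $k\le\frac{(2-\ve)n}{\log n}$ we have $\rho\ge\frac{\log n}{2-\ve}$, so $ke^{-2\rho}\rho\le n^{1-2/(2-\ve)}\cdot\mathrm{polylog}=n^{-\ve/(2-\ve)+o(1)}\to0$; the cubic terms are smaller still. The same estimate shows $j_\star-\lambda=O(1+\lambda^2\rho/k)=O(1)$, which is what the flatness argument needs. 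This also explains why the threshold $2n/\log n$ appears in the statement.
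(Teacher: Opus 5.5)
Your proposal is correct. The structural part matches the paper's argument: the ratio $r(j)$ is strictly decreasing, the terms decrease strictly when $\lambda_0<1$ and Bonferroni applies, and the sequence is unimodal when $\lambda_0>1$. For the size of the maximum, though, you take a different route.

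The paper never locates $j_\star$. It sandwiches $\max_j b_{n,k}(j)$ between two bounds:
\begin{itemize}
\item an upper bound $\max_j \lambda^j/j!$, from the global inequalities $\binom kj\le k^j/j!$ and $1-x\le e^{-x}$, which hold for every $j$;
\item a lower bound $b_{n,k}(j_0)\ge \frac{\lambda^{j_0}}{j_0!}\exp\lpa{-\frac{j_0^2(n+k)}{k(k-j_0)}}$ with $j_0=\lfloor\lambda\rfloor$, from $\binom kj\ge (k-j)^j/j!$ and $1-x\ge e^{-x/(1-x)}$.
\end{itemize}
This gives an explicit two-sided inequality whose defect is $\exp\lpa{-O(ne^{-2n/k})}$.

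You instead trap $j_\star=\lambda+O(1)$ using $\log r(j)=\log\frac{\lambda}{j+1}+O(\rho e^{-\rho}+n/k^2)$. You then evaluate $b_{n,k}(j_\star)$ by local Stirling expansions. Your correction term $\frac{\lambda^2(1+\rho)}{2k}\approx \frac12 ne^{-2n/k}$ is the same quantity as the paper's exponent. So both routes explain the threshold $\frac{2n}{\log n}$ in the same way. Incidentally, $\rho k e^{-2\rho}=ne^{-2\rho}$ exactly, so no polylogarithmic factor is needed there.

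The trade-off is as follows. The paper's sandwich is shorter, non-asymptotic, and insensitive to where the peak sits. Your route carries more error terms, but it yields extra information:
\begin{itemize}
\item the localization $j_\star=\lambda+O(1)$ throughout $k\le\frac{(2-\ve)n}{\log n}$;
\item an actual argument for $j_\star\sim\lambda$ when $k=o(n)$, a claim the paper states but its proof leaves implicit.
\end{itemize}
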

While the largest term is of order $\lambda^{-\frac12}e^\lambda$, the resulting alternating sum (see \eqref{E:Snk-central}) is of a much smaller order, namely, $e^{-\lambda}$, at least within the range $k\le k_1$. This highlights that any elementary approach relying on the alternating sum \eqref{E:Snk-sum} must carefully account for the intricate exponential cancellations involved. Such complications seem overlooked in \cite{Menon1973}. 

\begin{proof}
We begin with the ratios of consecutive terms in \eqref{E:Snk-sum}:
\[
	\frac{b_{n,k}(j+1)}{b_{n,k}(j)}
	=\frac{k-j}{j+1}\Lpa{1-\frac{1}{k-j}}^n,
\]
which behave like $\frac{\lambda}{j+1}=\frac{k}{j+1}e^{-\frac nk}$ for bounded values of $j$. Since \(\frac{k-j}{j+1}\) and \((1-\frac1{k-j})^n\) decrease in j, so does their product 
\begin{align}\label{E:bnkj-down}
	\frac{b_{n,k}(1)}{b_{n,k}(0)}> \frac{b_{n,k}(2)}{b_{n,k}(1)}
	>\cdots> \frac{b_{n,k}(k)}{b_{n,k}(k-1)}.
\end{align}
Thus, if $\lambda_0=\frac{b_{n,k}(1)}{b_{n,k}(0)}\le 1$, then by monotonicity \(b_{n,k}(j)\le \lambda^j\), yielding, by \eqref{E:bonf-ineq}, the asymptotic expansion 
\[
	\Bigl|S(n,k)-\sum_{0\le l<j}(-1)^l b_{n,k}(l)
	\Bigr|\le b_{n,k}(j)\le \lambda^j.
\]
This holds true whenever \(\lambda_0\le 1\) and uniformly for \(0\le j\le k\).

Assume now \(\lambda_0>1\). Then \eqref{E:bnkj-down} implies that \(b_{n,k}(j)\) is unimodal: they first increase, reaching the maximum at say, \(j=j_\star\), and then steadily decrease. By the simple inequalities \(\binom{k}{j} \le \frac{k^j}{j!}\) and \(1-\frac jk\le e^{-\frac jk}\), we see that
\[
	b_{n,k}(j)
	=\binom{k}{j}\left( 1-\frac{j}{k}\right)^{n} 
	<\frac{k^j}{j!}e^{-\frac{nj}k}
	=\frac{\lambda^j}{j!}
\]
for all \(0\le j\le k\). Thus
\[
	\max_{0\le j \le k} b_{n,k}(j)
	\le \max_{0\le j \le k} \frac{\lambda^j}{j!}
	=\frac{\lambda^{j_0}}{j_0!}
\]
where \(j_0:=\tr{\lambda}\), the largest integer less than or equal to $\lambda$. For a lower bound, note that
\[
\begin{split}
	b_{n,k}(j)
	\ge \frac{k^j}{j!}\left( 1-\frac{j}{k}\right)^{n+j}
	\ge \frac{k^j}{j!}e^{-\frac{j(n+j)}{k-j}},
\end{split}
\]
where we applied the elementary inequality \((1-x)\ge e^{-\frac x{1-x}}\). Consequently, with \(j=j_0\), we obtain the lower bound
\[
\begin{split}
	\max_{0\le j \le k} b_{n,k}(j)
	\geq b_{n,k}(j_0)
	\ge \frac{\lambda^{j_0}}{j_0!}e^{-\frac{j_0^2(n+k)}{k(k-j_0)}}
	\ge\frac{\lambda^{j_0}}{j_0!}
	e^{-\frac{2\lambda^2n}{k(k-\lambda-1)}}
	\ge\frac{\lambda^{j_0}}{j_0!}
	e^{-\frac{2ne^{-2n/k}}{(1-e^{-n/k}-1/k)}}.
\end{split}
\]
Here, while evaluating the term under the exponent, we used the inequalities \(\lambda-1\le j_0\le \lambda\). Combining the lower and upper bound inequalities
\begin{equation}\label{eq:lu_ineq_max}   
    \frac{\lambda^{j_0}}{j_0!}
	\exp\left(-\frac{2ne^{-\frac{2n}k}}{1-1/e-1/2}\right)
	\le\max_{0\le j \le k} b_{n,k}(j)
	\le\frac{\lambda^{j_0}}{j_0!},
\end{equation}
we see that for $2\le k\le \frac{(2-\ve)n}{\log n}$ with \(\ve>0\) the term under the exponent in the above estimate is \(o(1)\) and, as a consequence, by applying Stirling's formula to evaluate \(j_0!=\tr{\lambda}!\),  we complete the proof of the estimate \eqref{E:max-bnk-j}.
\end{proof}

In particular, if $k=\lfloor\frac{cn}{\log n}\rfloor$ with \(1<c<2\), then \(b_{n,k}(j)\) attains its maximum value 
\[
	b_{n,k}(j_\star)=O\lpa{(\log n)^{\frac12}
	n^{-\frac12+\frac1{2c}}\,e^{\frac{c}{\log n}\,
	n^{1-\frac1c}}},
\]
at a point $j_\star$ close to \(\frac{cn^{1-\frac{1}{c}}}{\log n}\).

\section{A finite difference approach}

While the sum \eqref{E:Snk-sum} serves as an asymptotic expansion when $\lambda=o(1)$, it breaks down when $\lambda\asymp 1$: the elementary approach proposed above does enlarge the uniformity range in $k$ from $k_0$ (see \eqref{E:k0}) to $k_1$ (see \eqref{E:k1}), yet it is not clear how to extend that range further. In this section, we derive a new asymptotic expansion for $S(n,k)$, which is useful for $n^{1-\ve}\le k\le (2-\ve)\frac{n}{\log n}$. Our approach relies on Leibniz’s rule for finite differences of a product of two functions (or sequences). 

\subsection{A finite difference expansion}

Define the backward difference operator \(\nabla\) by 
\[
	\nabla_{\!x} f(x) = f(x)-f(x-1).
\]

\begin{lemma}
For \(k,n\ge1\),
\begin{align}\label{E:stir-leib}
	S(n,k)
	=(-1)^k\nabla_{\!x}^{k}
	\Lpa{1-\frac{x}{k}}^n\Bigl|_{x=k}.
\end{align}
\end{lemma}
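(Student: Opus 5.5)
The identity \eqref{E:stir-leib} is a direct rewriting of the sieve formula \eqref{E:Snk-sum}, so I will simply expand the $k$-th backward difference explicitly. The standard formula for iterated backward differences is
\[
	\nabla_{\!x}^{k} f(x) = \sum_{0\le j\le k}\binom{k}{j}(-1)^j f(x-j).
\]
This holds for any function $f$. It follows by writing $\nabla_{\!x} = I - E^{-1}$, where $E^{-1}f(x)=f(x-1)$ is the shift operator. Since $I$ and $E^{-1}$ commute, the binomial theorem gives $(I-E^{-1})^k=\sum_j\binom kj(-1)^jE^{-j}$. Alternatively, one can run a one-line induction on $k$ using Pascal's rule.

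Apply this with $f(x)=(1-\frac xk)^n$ and evaluate at $x=k$. Then $f(k-j)=(1-\frac{k-j}{k})^n=(\frac jk)^n$, so
\[
	(-1)^k\nabla_{\!x}^{k}\Lpa{1-\frac{x}{k}}^n\Bigl|_{x=k}
	=\sum_{0\le j\le k}\binom{k}{j}(-1)^{k+j}\Lpa{\frac jk}^n .
\]

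Next substitute $j\mapsto k-j$. This uses $\binom{k}{k-j}=\binom kj$ and $(-1)^{k+(k-j)}=(-1)^j$, and turns the right-hand side into
\[
	\sum_{0\le j\le k}\binom kj(-1)^j\Lpa{1-\frac jk}^n=\frac{k!}{k^n}\Stirling nk=S(n,k),
\]
by \eqref{E:Snk-sum} and the definition \eqref{E:Snk-def}.

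No step is a genuine obstacle. The only point needing care is the sign bookkeeping: the factor $(-1)^k$ in front, together with the reflection $j\mapsto k-j$, produces exactly the alternating sign $(-1)^j$ of the sieve formula. The boundary term $j=k$ (equivalently $0^n$ after reflection) is harmless since $n\ge1$.
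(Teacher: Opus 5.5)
Your proof is correct and is essentially the paper's: expand $\nabla_{\!x}^{k}$ as the alternating binomial sum $\sum_{0\le j\le k}\binom{k}{j}(-1)^j f(x-j)$, evaluate at $x=k$, and match the result with the sieve formula \eqref{E:Snk-sum}. The only difference is that you carry out the reflection $j\mapsto k-j$ and the sign bookkeeping explicitly, which the paper compresses into the word ``comparing''.
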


\begin{proof}
Iterating the difference operator \(k\) times yields
\begin{equation}\label{E:finite-diff-k}
    \nabla_{\!x}^{k}f(x) 
	=\sum_{0\le j\le k}\binom{k}{j}(-1)^jf(x-j).
\end{equation}
Then
\[
	\nabla_{\!x}^{k}\Lpa{1-\frac{x}{k}}^n
	=\sum_{0\le j\le k}\binom{k}{j}(-1)^j
	\Lpa{1-\frac{x-j}{k}}^n,
\]
and \eqref{E:stir-leib} follows from comparing this expression with the right-hand side of \eqref{E:Snk-sum} for Stirling numbers.
\end{proof}

For large \(k\), we can approximate $\left(1-\frac{x}{k}\right)^n$ by 
$e^{-\frac{nx}k}$; it is then natural to write
\begin{align}\label{E:fg}
	\Lpa{1-\frac{x}{k}}^n
	=f(x)g(x),\eqtext{with}
	f(x):=e^{-x\frac{n}{k}} \eqtext{and}
	g(x):=e^{\frac{nx}k}
	\Lpa{1-\frac{x}{k}}^n.
\end{align}

\begin{lemma}[Leibniz's formula for finite differences]
\begin{equation}\label{E:Leibniz}
	\nabla_{\!x}^{k}\bigl(f(x) g(x) \bigr)
	=\sum_{0\le j\le k}\binom{k}{j} 
	\lpa{\nabla_{\!x}^jf(x)}
	\lpa{\nabla_{\!x}^{k-j}g(x-j)}.
\end{equation}
\end{lemma}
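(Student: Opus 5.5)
We prove \eqref{E:Leibniz} by induction on $k$, in exact analogy with the classical Leibniz rule for derivatives. The key input is a one-step product rule for the backward difference. For any sequences (or functions) $f,g$,
\[
	\nabla_{\!x}\bigl(f(x)g(x)\bigr)
	= f(x)g(x)-f(x-1)g(x-1)
	= \bigl(\nabla_{\!x}f(x)\bigr)g(x) + f(x-1)\,\nabla_{\!x}g(x).
\]
This rule is not symmetric, and the shift on the undifferentiated factor is what produces the argument $x-j$ in \eqref{E:Leibniz}. It is more convenient to rewrite it with the shift placed on $g$:
\[
	\nabla_{\!x}(fg)(x)
	= f(x)\,\nabla_{\!x}g(x) + \bigl(\nabla_{\!x}f(x)\bigr)g(x-1).
\]
One checks directly that the right-hand side equals $f(x)g(x)-f(x)g(x-1)+f(x)g(x-1)-f(x-1)g(x-1)$.

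\emph{Base case and induction step.} For $k=0$ the identity is trivial, and for $k=1$ it is exactly the shifted product rule above. Assume \eqref{E:Leibniz} holds for $k$, and apply $\nabla_{\!x}$ to each summand $\binom{k}{j}(\nabla^j f)(x)\,(\nabla^{k-j}g)(x-j)$. Using the shifted product rule with $F=\nabla^j f$ and $G(x)=(\nabla^{k-j}g)(x-j)$, each summand splits into
\[
	(\nabla^{j}f)(x)\,(\nabla^{k+1-j}g)(x-j)
	\quad\text{and}\quad
	(\nabla^{j+1}f)(x)\,(\nabla^{k-j}g)(x-j-1).
\]
Here we use that $\nabla$ commutes with shifts, so that $\nabla_{\!x}[G](x)=(\nabla^{k-j+1}g)(x-j)$ and $G(x-1)=(\nabla^{k-j}g)(x-(j+1))$. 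Re-indexing the second family by $j\mapsto j-1$ gives terms of the form $(\nabla^{j}f)(x)(\nabla^{k+1-j}g)(x-j)$ with coefficient $\binom{k}{j-1}$. Pascal's rule $\binom{k}{j}+\binom{k}{j-1}=\binom{k+1}{j}$ then yields \eqref{E:Leibniz} with $k+1$ in place of $k$, where the boundary terms $j=0$ and $j=k+1$ come out correctly because $\binom{k}{-1}=\binom{k}{k+1}=0$.

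\emph{Alternative route.} A slicker argument writes $\nabla = I - E^{-1}$, where $E^{-1}$ is the shift $x\mapsto x-1$. On products one has $E^{-1}(fg)=(E^{-1}f)(E^{-1}g)$. Introduce operators $A$ and $B$ acting on $f$ and $g$ separately, so that $\nabla$ acting on a product becomes $I\otimes I - E^{-1}\otimes E^{-1} = \nabla\otimes I + E^{-1}\otimes\nabla$, up to ordering conventions. These two summands commute, so the binomial theorem applies directly and gives $\sum_j\binom{k}{j}(E^{-j}\nabla^{?}\dots)$. Matching this form to \eqref{E:Leibniz} requires the version $I\otimes\nabla+\nabla\otimes E^{-1}$, which gives $\sum_j \binom kj \nabla^j f\cdot \nabla^{k-j}E^{-j}g$, exactly the claim.

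The only point needing care is bookkeeping of the shift, namely checking that the shift lands on $g$ with total amount $j$ (the number of differences taken on $f$) rather than on $f$. Choosing the correct one of the two asymmetric product rules at the outset resolves this, so I expect no real obstacle.
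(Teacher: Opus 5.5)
Your induction is correct and follows the paper's argument. The paper proves exactly the same one-step rule $\nabla_{\!x}(fg)=f(x)\nabla_{\!x}g(x)+g(x-1)\nabla_{\!x}f(x)$ and then says that a direct iteration yields the formula; your Pascal's-rule induction (using that $\nabla$ commutes with shifts) simply writes out that iteration. Only the final form $I\otimes\nabla+\nabla\otimes E^{-1}$ of your operator aside gives the stated formula; the first splitting $\nabla\otimes I+E^{-1}\otimes\nabla$ yields the variant with the shift on $f$, so drop it from a write-up.
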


\begin{proof}
The difference operator with respect to the product of two functions can be expressed as
\[
\begin{aligned}
	\nabla_{\!x}\bigl(f(x)g(x)\bigr) 
	&=f(x) g(x) -f(x-1) g(x-1)\\
	&=f(x) g(x) -f(x) g(x-1) 
	    +f(x) g(x-1) -f(x-1) g(x-1) \\
	&=f(x) \nabla_{\!x} g(x) 
	    + g(x-1) \nabla_{\!x} f(x).
\end{aligned}
\]
A direct iteration then gives \eqref{E:Leibniz}.
\end{proof}

For convenience, define the ratio
\begin{align}\label{E:Lambda}
	\Lambda := \frac{e^{-\frac{n}{k}}}{1-e^{-\frac{n}{k}}}
	= \frac{\lambda}{k-\lambda},
\end{align}
and
\begin{align}\label{E:Dnkj}
	D_{n,k}(j)
	:= \nabla_{\!x}^jg(x)\bigl|_{x=j} 
	=\sum_{0\le l\le j}\binom{j}{l}(-1)^{j-l}
	\left(e^{\frac {l}k}\left(1-\frac{l}{k}\right)\right)^n .
\end{align}
Leibniz's formula \eqref{E:Leibniz} then gives the following exact identity.

\begin{prop}[Identity]\label{P:st2-lbnz-prop}
The normalized Stirling partition numbers satisfy the identity
\begin{align}\label{E:st2-lbnz}
	S(n,k)
	= \left(1-\frac{\lambda}k\right)^{k} 
	\sum_{0\le j\le k}\binom{k}{j}(-\Lambda)^j
	D_{n,k}(j).
\end{align}
\end{prop}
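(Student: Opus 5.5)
We start from the finite-difference representation \eqref{E:stir-leib}, $S(n,k)=(-1)^k\nabla_{\!x}^k(1-\frac xk)^n|_{x=k}$. We then use the factorization \eqref{E:fg}, $(1-\frac xk)^n=f(x)g(x)$ with $f(x)=e^{-nx/k}$. Applying Leibniz's formula \eqref{E:Leibniz} with the two roles interchanged gives
\[
	\nabla_{\!x}^{k}\bigl(f(x)g(x)\bigr)
	=\sum_{0\le j\le k}\binom{k}{j}
	\lpa{\nabla_{\!x}^{k-j}f(x)}\lpa{\nabla_{\!x}^{j}g(x-k+j)}.
\]
This form follows either from the symmetry of the product rule, $\nabla(fg)=g\nabla f+f(x-1)\nabla g$, or by applying \eqref{E:Leibniz} to $g f$ and reindexing $j\mapsto k-j$. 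Evaluating at $x=k$, the $g$-factor becomes $\nabla_{\!x}^j g(x)|_{x=j}=D_{n,k}(j)$, exactly as in \eqref{E:Dnkj}.

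Next we compute the differences of the exponential $f$ explicitly. Since $\nabla_{\!x}e^{-ax}=e^{-ax}(1-e^{a})$ with $a=\frac nk$, we get
\[
	\nabla_{\!x}^{m}e^{-ax}=e^{-ax}(1-e^{a})^m .
\]
At $x=k$ with $m=k-j$ this equals
\[
	e^{-n}(1-e^{n/k})^{k-j}
	=(-1)^{k-j}e^{-n}e^{n(k-j)/k}(1-e^{-n/k})^{k-j}
	=(-1)^{k-j}e^{-nj/k}(1-e^{-n/k})^{k-j}.
\]
Multiplying by $(-1)^k$ and factoring out $(1-e^{-n/k})^k=(1-\frac{\lambda}{k})^k$ leaves
\[
	(-1)^j\Lpa{\frac{e^{-n/k}}{1-e^{-n/k}}}^j=(-\Lambda)^j,
\]
by \eqref{E:Lambda}. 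This gives \eqref{E:st2-lbnz}.

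It remains to check the evaluation point of $g$. In the reindexed Leibniz formula the $g$-factor is shifted by $k-j$, so at $x=k$ it is evaluated at $j$. As a sanity check, $D_{n,k}(j)$ as written in \eqref{E:Dnkj} equals $\sum_l\binom jl(-1)^{j-l}g(l)=\nabla^j g(j)$, which is consistent.

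The only delicate step is the bookkeeping of which factor carries the shift in the Leibniz rule. If \eqref{E:Leibniz} is applied literally, one instead gets $\nabla^j f(k)$ paired with $\nabla^{k-j}g(k-j)$, and the formula comes out in the wrong shape. So we would first verify the alternate form directly by induction on $k$, using $\nabla(fg)(x)=g(x)\nabla f(x)+f(x-1)\nabla g(x)$. Everything else is an exact algebraic identity, with no asymptotics involved.
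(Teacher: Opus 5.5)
Your route is the paper's: factor $(1-\frac xk)^n=f(x)g(x)$, apply the finite-difference Leibniz rule, and evaluate the differences of $e^{-nx/k}$ in closed form. Your computation $(-1)^k\nabla_{\!x}^{k-j}e^{-nx/k}\big|_{x=k}=(1-\frac{\lambda}{k})^k(-\Lambda)^j$ is correct, and so is the Leibniz form you display. What is wrong is how you justify that display, and your closing worry about it.

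Your display is simply \eqref{E:Leibniz} with $j$ replaced by $k-j$, which is exactly what the paper does. The literal term $\binom{k}{j}\nabla^{j}f(k)\,\nabla^{k-j}g(k-j)$ becomes, after relabeling, $\binom{k}{j}\nabla^{k-j}f(k)\,\nabla^{j}g(j)=\binom{k}{j}\nabla^{k-j}f(k)\,D_{n,k}(j)$. So the literal application of \eqref{E:Leibniz} does not come out ``in the wrong shape.''

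Conversely, the routes you actually offer produce the other version of the rule, with the shift on $f$ instead of $g$:
\begin{itemize}
\item Applying \eqref{E:Leibniz} to $gf$ gives $\sum_j\binom{k}{j}(\nabla^{j}g)(x)\,(\nabla^{k-j}f)(x-j)$, and reindexing does not move the shift off $f$.
\item Iterating $\nabla(fg)(x)=g(x)\nabla f(x)+f(x-1)\nabla g(x)$ by induction on $k$ gives the same expression. Indeed, the $k=1$ case of your own display is $g(x-1)\nabla f(x)+f(x)\nabla g(x)$, which is the other product rule.
\end{itemize}
At $x=k$ the $f$-shifted version pairs $\nabla^{k-j}f(k-j)$ with $\nabla^{j}g(k)\neq D_{n,k}(j)$. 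It therefore leads to a different identity, true but not the one wanted:
\[
S(n,k)=\sum_{0\le j\le k}\binom{k}{j}(-1)^j\Lpa{1-\frac{\lambda}{k}}^{k-j}\nabla_{\!x}^{j}g(x)\big|_{x=k},
\]
not \eqref{E:st2-lbnz}. So if you followed your own justification, you would not reach the statement.

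The repair is one line. Either cite \eqref{E:Leibniz} and relabel $j\mapsto k-j$, or, for a self-contained induction, use $\nabla(fg)(x)=f(x)\nabla g(x)+g(x-1)\nabla f(x)$, the product rule in the paper's proof of \eqref{E:Leibniz}. With that rule, the induction for your display closes by Pascal's rule.
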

\begin{proof}
Substituting \eqref{E:fg} into \eqref{E:Leibniz} and then replacing \(j\) by \(k-j\), we obtain
\[
	S(n,k)
	=(-1)^k\sum_{0\le j\le k}\binom{k}{j}
	\left(\nabla_{\!x}^{k-j} 
	e^{-\frac{nx}{k}}\Bigl|_{x=k} \right)
	D_{n,k}(j).
\]
The finite differences of \(e^{-nx/k}\) are explicit:
\[
	\nabla_{\!x}^{k-j}e^{-\frac{nx}{k}} 
	=\sum_{0\le l\le k-j}\binom{k-j}{l}
	(-1)^{l}e^{-(x-l)\frac{n}{k}}
	=e^{-\frac{nx}{k}}\bigl(1-e^{\frac{n}{k}}\bigr)^{k-j}.
\]
Evaluating at \(x=k\), we get
\[
\begin{split}
	\nabla_{\!x}^{k-j}e^{-\frac{nx}{k}} \Bigl|_{x=k}
	&=e^{-n}\bigl(1-e^{\frac{n}{k}}\bigr)^{k-j}\\
	&=(-1)^{k-j}e^{-j\frac{n}{k}}
	\bigl(1-e^{-\frac{n}{k}}\bigr)^{k-j}\\
	&=(-1)^{k-j}
	\left(1-\frac{\lambda}{k}\right)^k\Lambda^j.
\end{split}
\]
Substituting this into the previous display proves \eqref{E:st2-lbnz}.
\end{proof}

When $\frac{k}{\sqrt{n}}\to \infty$, the function $g(x)$ behaves asymptotically like a constant
\[
	g(x)\approx \exp\Lpa{-\frac{x^2n}{2k^2}}\to 1,
\]
so its finite differences are negligibly small. Consequently, the main contribution in Leibniz's formula \eqref{E:Leibniz} is expected to arise from the term with $j=0$ .

Although the terms on the right-hand side of \eqref{E:st2-lbnz} are individually more complex than those on the left-hand side, the utility of this identity lies in its asymptotic character: truncating the expansion at any fixed number of terms yields a rigorously effective approximation, entirely circumventing the exponential cancellation inherent in the defining sum \eqref{E:Snk-sum}. More precisely, we establish in this section that \eqref{E:st2-lbnz} furnishes an asymptotic expansion in $k$ throughout the range stated in the following theorem.

\begin{thm}\label{T:finite-diff-exp}
If
\begin{align}\label{E:kk2}
	\frac{n}{\log n}\le k\le \frac{2n}{\log n + 6},
	\eqtext{or}
	\frac1{\log n}\le \lambda \le \frac{2}{e^3}\cdot
	\frac{\sqrt{n}}{\log n+6},
\end{align}
then
\begin{equation}\label{E:FD}
	S(n,k)
	= \Lpa{1-\frac{\lambda}{k}}^{k} 
	\llpa{1+\sum_{1\le j<s}\binom{k}{j}
	(-\Lambda)^jD_{n,k}(j)
	+O\Lpa{\Lpa{\frac{\lambda\log n}{\sqrt{n}}}^{s}}},
\end{equation}
for any fixed \(s\ge 1\), where $\Lambda$ and $D_{n,k}(j)$ are defined in \eqref{E:Lambda} and \eqref{E:Dnkj}, respectively.
\end{thm}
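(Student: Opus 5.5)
The plan is to treat \eqref{E:st2-lbnz} as exact and show that its tail, $\sum_{s\le j\le k}\binom{k}{j}\Lambda^j|D_{n,k}(j)|$, is $O\lpa{(\lambda\log n/\sqrt n)^s}$. The $j=0$ term is $D_{n,k}(0)=g(0)=1$, which accounts for the leading $1$ in \eqref{E:FD}. Throughout I use $k\Lambda=\frac{k\lambda}{k-\lambda}=\lambda(1+O(\lambda/k))$, so that $\binom{k}{j}\Lambda^j\le (k\Lambda)^j/j!\le (c\lambda)^j/j!$. Under \eqref{E:kk2} we also have $\frac{\sqrt n}{k}\asymp \frac{\log n}{\sqrt n}$, so the target ratio $\lambda\log n/\sqrt n$ is comparable to $\lambda\sqrt n/k$. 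I split the tail at $J:=\tr{\delta k/\sqrt n}$ for a suitable constant $\delta$.

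\emph{Small $j$ ($s\le j\le J$): analytic bound on $D_{n,k}(j)$.} Write $g(x)=\exp(-n\,h(x/k))$ with $h(t)=-t-\log(1-t)=\sum_{m\ge2}t^m/m$; this is analytic for $|x|<k$. Since $D_{n,k}(j)$ is a $j$-th forward difference of $g$ starting at $0$, I will use the Hermite–Genocchi (B-spline) representation
\[
D_{n,k}(j)=\int g^{(j)}(t)M_j(t)\dd t,
\]
where $M_j$ is a probability density on $[0,j]$. This gives $|D_{n,k}(j)|\le\max_{0\le\xi\le j}|g^{(j)}(\xi)|$. Cauchy's estimate on a circle of radius $r$ about $\xi$ then yields
\[
|g^{(j)}(\xi)|\le j!\,r^{-j}\exp\lpa{C n(j+r)^2/k^2},
\]
provided $j+r\le k/2$. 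Choosing $r=k\sqrt{j/n}$ makes the exponent $O(j)$ when $j\le J$, because then $nj^2/k^2=O(j\cdot nJ/k^2)$ and $J\le\delta k/\sqrt n$. The side condition $j+r\le k/2$ holds since $k/n\asymp1/\log n$. Hence
\[
|D_{n,k}(j)|\le j!\,(C'n/(jk^2))^{j/2}\le (C''\sqrt{jn}/k)^j ,
\]
and the corresponding term of the tail is at most
\[
\frac{(c\lambda)^j}{j!}\Lpa{\frac{C''\sqrt{jn}}{k}}^j\le\Lpa{\frac{C\lambda\sqrt n}{k\sqrt j}}^j\le\Lpa{\frac{C\lambda\log n}{\sqrt n}}^j .
\]
When $\lambda\log n/\sqrt n$ is small, this sums geometrically from $j=s$ to give $O\lpa{(\lambda\log n/\sqrt n)^s}$. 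The extra $j^{-j/2}$ factor gives slack for the boundary regime where $\lambda\log n/\sqrt n$ is only bounded, at the cost of a constant depending on $s$.

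\emph{Large $j$ ($j>J$): crude bound.} Here I use $0<g\le1$ on $[0,k)$, which follows from $e^{t}(1-t)\le1$. This gives $|D_{n,k}(j)|\le 2^j$, so each term is at most $(2c\lambda)^j/j!\le(2ec\lambda/j)^j$. Since $j>J\asymp\sqrt n/\log n$ and $\lambda\le\frac{2}{e^3}\frac{\sqrt n}{\log n+6}$, the ratio $2ec\lambda/j$ is bounded by a constant below $1$. The tail is then super-geometrically small, of size $e^{-cJ}$, which is far below any fixed power $(\lambda\log n/\sqrt n)^s$. The lower bound $\lambda\ge1/\log n$ ensures that power is at least $n^{-s/2}$.

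\emph{Main obstacle.} I expect the delicate point to be matching constants at the cut $J$. The Cauchy-estimate range $j\lesssim k/\sqrt n$ and the crude-bound range $j\gtrsim e\lambda$ must overlap, and this is exactly where the explicit constants $2/e^3$ and $\log n+6$ in \eqref{E:kk2} enter. My first attempt would track $h(t)\le t^2/(2(1-t))$ explicitly, so that the Cauchy-estimate exponent is bounded by an explicit multiple of $j$. I would then choose $\delta$ so that the crude bound $(2e\lambda/j)^j$ already decays at $j=J$, and check that the upper limit $k\le 2n/(\log n+6)$ makes these two requirements compatible.
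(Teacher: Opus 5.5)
Your argument is correct, and its skeleton is the paper's. Both use the exact identity \eqref{E:st2-lbnz}, the tail $\sum_{j\ge s}$, a cut at $j\asymp k/\sqrt n$, and the crude bound $|D_{n,k}(j)|\le 2^j$ beyond the cut. Below the cut both reduce to $|D_{n,k}(j)|\le\max_{[0,j]}|g^{(j)}|$; your Hermite--Genocchi step is exactly Lemma~\ref{finite_diff_deriv}. The genuine difference is how you bound $g^{(j)}$.

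The paper (Lemma~\ref{E:Dnkj-bd2}) dominates the Taylor coefficients of $g$ by those of $1/(1-\sqrt n\,x/k)$, obtaining $|D_{n,k}(j)|\le j!\,(\sqrt n/k)^j(1-j\sqrt n/k)^{-j-1}$. Multiplying by $\binom kj\Lambda^j\le (k\Lambda)^j/j!$, the factorials cancel. With the cut $j_1=c_1k/\sqrt n$ chosen to balance them, both halves of the tail become geometric with ratio $c_2\sqrt n\,\Lambda$, where $c_2=1+2e$. This is exactly where the numerical constant $6$ in \eqref{E:kk2} enters: it forces $c_2\sqrt n\,\Lambda<\frac12$.

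Your Cauchy estimate with the optimized radius $r=k\sqrt{j/n}$ gives $|D_{n,k}(j)|\le j!\,(Cn/(jk^2))^{j/2}$. This is smaller than the paper's bound by a factor of order $j^{-j/2}$. The small-$j$ terms are therefore $(C'\sqrt n\,\Lambda/\sqrt j)^j$, and they sum to $O((\sqrt n\,\Lambda)^s)$ as soon as $\sqrt n\,\Lambda$ is merely bounded.

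Consequently the ``main obstacle'' you anticipate does not arise. Your small-$j$ estimate holds for every fixed $\delta$, since you only use $nJ/k^2\to0$ and $J+r\le k/2$. So you may take $\delta$ as large as needed. For $j>J$ you can then bound $(2ec\lambda/j)^j\le(2ec\sqrt n\,e^{-n/k}/\delta)^j$ directly, which is a power of the target quantity as in the paper. Your comparison of $e^{-cJ}$ with $n^{-s/2}$ also works. Under \eqref{E:kk2}, $\delta=1$ already gives a ratio of at most $2ce^{-2}<1$.

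Your route therefore proves \eqref{E:FD} with $\log n+6$ replaced by $\log n+C_0$ for any fixed $C_0$, that is, whenever $\sqrt n\,e^{-n/k}=O(1)$. The paper's geometric-ratio argument, as written, needs a specific numerical threshold. The price is that you leave the purely elementary, formal-power-series setting the paper deliberately keeps, since you use Cauchy's inequality.
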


The constant $6$ in the upper limit of $k$ in \eqref{E:kk2} can be made smaller if needed.

\subsection{Lemmas}

We first derive two upper bounds for \(|D_{n,k}(j)|\) that will be useful for large and small values of \(j\), respectively.

\begin{lemma}\label{L:Dnkj-bd1}
The inequality 
\[
	\left|D_{n,k}(j)\right|\le 2^j
\]
holds for each $k\ge1$, $n\ge0$, and $0\le j\le k$.
\end{lemma}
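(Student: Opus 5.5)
We aim for the crude bound $|D_{n,k}(j)|\le 2^j$ directly from the defining alternating sum \eqref{E:Dnkj}, with the triangle inequality as the only real tool. The whole argument rests on showing that each summand has modulus at most $\binom{j}{l}$.

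The key observation concerns the quantity being raised to the $n$-th power. For $0\le l\le j\le k$, set $t=l/k\in[0,1]$. Then
\[
	0\le e^{t}(1-t)\le 1 .
\]
Nonnegativity is clear, since $1-t\ge0$. For the upper bound, the function $h(t)=e^t(1-t)$ satisfies $h(0)=1$ and $h'(t)=-te^t\le0$. Alternatively, $1-t\le e^{-t}$ gives the same conclusion at once.

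It follows that $\bigl(e^{l/k}(1-\frac lk)\bigr)^n\in[0,1]$ for every $n\ge0$. The case $n=0$ is trivial, since then the power equals $1$ (with the usual convention $0^0=1$ when $l=k$).

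Applying the triangle inequality to \eqref{E:Dnkj} now gives
\[
	|D_{n,k}(j)|\le \sum_{0\le l\le j}\binom{j}{l}
	\Bigl(e^{\frac lk}\Bigl(1-\frac lk\Bigr)\Bigr)^n
	\le \sum_{0\le l\le j}\binom{j}{l}=2^j .
\]

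The restriction $j\le k$ is needed only to keep $l/k\le1$, so that the base is nonnegative and no sign issues arise. There is no real obstacle here. The bound is wasteful for small $j$, where cancellation makes $D_{n,k}(j)$ much smaller, which is why a second bound will be needed for that range. For large $j$, this bound is what controls the tail of \eqref{E:st2-lbnz}, through the factor $\binom{k}{j}(2\Lambda)^j$.
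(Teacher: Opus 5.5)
Your proof is correct and is essentially the paper's argument. The paper also shows $0\le e^{x/k}(1-x/k)\le 1$ for $0\le x\le k$ using $1+y\le e^{y}$, and then applies the triangle inequality to \eqref{E:Dnkj} to get $\sum_{0\le l\le j}\binom{j}{l}=2^j$.
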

\begin{proof}
First, by the inequality $1+y\le e^{y}$ for real $y$, we have
\[
    0\le e^{\frac xk}\left( 1-\frac{x}{k}\right)\le 1,
    \qquad(0\le x\le k).
\]
Consequently, by \eqref{E:Dnkj},
\[
	|D_{n,k}(j)|
	\le \sum_{0\le l\le j}\binom{j}{l}= 2^j.
	\qedhere
\]
\end{proof}

\begin{lemma}\label{finite_diff_deriv}
Assume that a function \(u(y)\) is defined and \(j\) times continuously differentiable on \(\mathbb{R}\). Then
\[
	\bigl|\nabla_{\!x}^ju(x)\bigr|
	\le\max_{y\in[x-j,x]}\bigl|u^{(j)}(y)\bigr|.
\]
\end{lemma}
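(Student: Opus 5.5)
The plan is to use the standard representation of the $j$-th backward difference as an iterated integral of the $j$-th derivative. First I would show by induction on $j$ that
\[
	\nabla_{\!x}^j u(x)
	= \int_0^1\!\!\cdots\!\int_0^1
	u^{(j)}\lpa{x-t_1-\cdots-t_j}\dd t_1\cdots\dd t_j .
\]
The base case $j=0$ is trivial. For $j=1$ it is the fundamental theorem of calculus:
\[
	u(x)-u(x-1)=\int_0^1 u'(x-t)\dd t .
\]
For the inductive step, write $\nabla_{\!x}^{j}u=\nabla_{\!x}\lpa{\nabla_{\!x}^{j-1}u}$ and apply the $j=1$ case to the function $v:=\nabla_{\!x}^{j-1}u$. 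This function is $C^1$, and its derivative is $\nabla_{\!x}^{j-1}u'$, because $\nabla$ commutes with differentiation: it is a finite linear combination of shifts. Then apply the induction hypothesis to $u'$, which is $(j-1)$ times continuously differentiable. This gives the $j$-fold integral formula.

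Since each $t_i\in[0,1]$, the point $y=x-t_1-\cdots-t_j$ ranges over $[x-j,x]$. The integrand is therefore bounded in absolute value by $\max_{y\in[x-j,x]}|u^{(j)}(y)|$. The integration domain has volume $1$, so the bound follows.

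As an alternative, one could cite the mean value theorem for divided differences, $\nabla^j u(x)=u^{(j)}(\xi)$ for some $\xi\in[x-j,x]$. That is the statement that the $j$-th divided difference on equally spaced nodes equals $u^{(j)}(\xi)/j!$, combined with $\nabla^j u = j!\,u[x-j,\dots,x]$. The integral route is more self-contained, and I would present that one.

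There is no real obstacle. The only point requiring care is justifying the interchange of $\nabla$ and differentiation, and the continuity of $u^{(j)}$ on the compact interval $[x-j,x]$. The latter guarantees that the maximum exists and that the integrals are well defined.
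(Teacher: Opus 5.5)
Your proposal is correct and follows the paper's proof, which likewise rests on the iterated-integral representation $\nabla_{\!x}^j u(x)=\int_{[0,1]^j}u^{(j)}(x-t_1-\cdots-t_j)\,dt_1\cdots dt_j$ and bounds the integrand on $[x-j,x]$. Your inductive derivation of that formula (and your mean-value alternative) only supplies details the paper leaves implicit.
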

\begin{proof}
This follows from the integral representation
\[
	\nabla_y^m u(y)
	= \int_{[0,1]^m}u^{(m)}(y-t_1-\cdots-t_m)
	\,\dd{t_m}\cdots \dd{t_1}.
	\qedhere
\]
\end{proof}

\begin{lemma}\label{E:Dnkj-bd2}
Uniformly for \(0\le j < \frac k{\sqrt{n}}\)
\[
	\left|D_{n,k}(j)\right|
	\le \frac{j!}{\lpa{1-j\frac{\sqrt{n}}{k}}^{j+1}}
	\Lpa{\frac{\sqrt{n}}{k}}^j.
\]
\end{lemma}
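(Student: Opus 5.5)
The plan is to combine Lemma~\ref{finite_diff_deriv} with a Cauchy-type estimate for the $j$-th derivative of $g$. By definition \eqref{E:Dnkj}, $D_{n,k}(j)=\nabla_{\!x}^j g(x)|_{x=j}$, where $g(y)=e^{ny/k}(1-y/k)^n$. Lemma~\ref{finite_diff_deriv} then gives
\[
	|D_{n,k}(j)|\le \max_{y\in[0,j]}|g^{(j)}(y)|.
\]
So it suffices to bound $g^{(j)}(y)$ for $0\le y\le j$. I would do this with Cauchy's integral formula on a circle of radius $r$ centred at $y$:
\[
	|g^{(j)}(y)|\le \frac{j!}{r^j}\max_{|z-y|=r}|g(z)|.
\]

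The key step is a clean bound for $|g(z)|$. Write $g(z)=\exp\bigl(n h(z/k)\bigr)$ with $h(w)=w+\log(1-w)=-\sum_{m\ge2}w^m/m$. For $|w|<1$,
\[
	\operatorname{Re}h(w)\le \sum_{m\ge2}\frac{|w|^m}{m}\le \frac{|w|^2}{2(1-|w|)}.
\]
I would rather bound $|g(z)|\le \exp\bigl(n|w|^2/(2(1-|w|))\bigr)$ with $w=z/k$ and $|z|\le y+r\le j+r$.

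Choosing $r$ is the main obstacle, because the target bound has no exponential factor but has the specific denominator $(1-j\sqrt n/k)^{j+1}$. The natural scale is $r\approx k/\sqrt n$, which makes $n r^2/k^2\asymp1$. A crude Cauchy estimate would produce a factor $e^{O(1)}$ rather than exactly $1$, so I would instead aim for the sharper statement via the power series. Expanding $g(z)=\sum_m a_m(z-y)^m$ around $y$, the coefficients of $\exp(nh)$ are dominated coefficientwise by those of
\[
	\exp\Bigl(n\sum_{m\ge2}\frac{(z/k)^m}{m}\Bigr)=e^{-nz/k}(1-z/k)^{-n}.
\]
I would then compare the majorant with a geometric-type series in $\sqrt n\, z/k$. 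The idea is that the factor $\exp(n u^2/2+\dots)$ with $u=z/k$ is majorized by $\sum_m (\sqrt n\,u)^m$, since $n^{m/2}$ dominates the coefficient of $u^m$ in $\exp(n(u^2/2+u^3/3+\cdots))$. The justification is that each term of order $u^m$ arises as a product of $\ell\le m/2$ factors of $n$, and a short combinatorial count confirms the domination. This gives the majorant
\[
	G(u)=\frac{1}{1-\sqrt n\,u}.
\]

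Differentiating the majorant $j$ times and evaluating at $u\le j/k$, the factor $k^{-j}$ from the chain rule yields
\[
	|g^{(j)}(y)|\le \frac{j!\,(\sqrt n/k)^j}{(1-\sqrt n\,y/k)^{j+1}}
	\le \frac{j!}{\bigl(1-j\frac{\sqrt n}{k}\bigr)^{j+1}}\Bigl(\frac{\sqrt n}{k}\Bigr)^j.
\]
This is exactly the stated bound, and it is valid precisely when $j\sqrt n/k<1$.

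The delicate points are twofold. First, the coefficient domination $[u^m]\exp(n\sum_{m\ge2}u^m/m)\le n^{m/2}$ must hold for all $m$ and $n$; I would prove it by induction via the ODE $F'=n\bigl(u/(1-u)\bigr)F$. Second, the absolute-value majorization must be passed through the Taylor expansion at $y$ rather than at $0$. Both are routine but need care.
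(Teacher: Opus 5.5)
Your proposal is correct and follows essentially the paper's route: you use Lemma~\ref{finite_diff_deriv} to reduce to bounding $g^{(j)}$ on $[0,j]$, majorize the Taylor coefficients of $g$ at $0$ by those of $1/(1-\frac{\sqrt n}{k}x)$, and differentiate that majorant $j$ times. The only differences are minor. You prove $[u^m]\exp\bigl(n\sum_{r\ge2}u^r/r\bigr)\le n^{m/2}$ by induction from $(1-u)F'=nuF$, i.e.\ $f_{m+1}=\frac{mf_m+nf_{m-1}}{m+1}$, which works; the paper instead uses $n\le n^{r/2}$ for $r\ge2$ together with the fact that $\exp\bigl(\sum_{r\ge2}v^r/r\bigr)=e^{-v}/(1-v)$ has coefficients in $[0,1]$. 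Also, since $y\ge0$ you can simply differentiate the series at $0$ termwise, so no re-expansion at $y$ is needed.
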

\begin{proof}
Since
\[
    g(x)=\exp\!\left(-\sum_{m\ge 2}\frac{n}{m}
    \left(\frac{x}{k}\right)^m\right)
\]
and \(\frac{n}{mk^m}\le\frac{1}{m}\bigl(\frac{\sqrt{n}}{k}\bigr)^m\) for \(m\ge 2\), comparing coefficients termwise gives, for any \(s\ge 0\),
\begin{equation}\label{E:coeff-g}
    \bigl|[x^s]g(x)\bigr|
    \le [x^s]\exp\!\left(\sum_{m\ge 2}\frac{1}{m}
    \left(\frac{\sqrt{n}}{k}\,x\right)^m\right)
    \le [x^s]\frac{1}{1-\frac{\sqrt{n}}{k}\,x}
    =\left(\frac{\sqrt{n}}{k}\right)^s.
\end{equation}
Summing over \(s\ge j\) with the appropriate factorial weights, this lifts to derivatives: for \(0\le x<k/\sqrt{n}\),
\[
    \bigl|g^{(j)}(x)\bigr|
    \le \left(\frac{1}{1-\frac{\sqrt{n}}{k}\,x}\right)^{(j)}
    =\frac{j!}{\left(1-\frac{\sqrt{n}}{k}\,x\right)^{j+1}}
    \left(\frac{\sqrt{n}}{k}\right)^j.
\]
It follows, by Lemma~\ref{finite_diff_deriv}, that 
\[
    |D_{n,k}(j)|
    =\left|\nabla_{\!x}^j g(x)\Big|_{x=j}\right|
    \le \max_{x\in[0,j]}|g^{(j)}(x)|
    \le \frac{j!}{\left(1-\frac{j\sqrt{n}}{k}\right)^{j+1}}
    \left(\frac{\sqrt{n}}{k}\right)^j.\qedhere
\]
\end{proof}

\subsection{Asymptotic nature of the expansion \eqref{E:st2-lbnz}}

\begin{proof}[Proof of Theorem~\ref{T:finite-diff-exp}]
Since \(D_{n,k}(0)=1\), it suffices to estimate the remainder
\[
	R_s:=
	\frac{S(n,k)}{\left(1-\frac{\lambda}{k}\right)^k}
	-\sum_{0\le j<s}\binom{k}{j}(-\Lambda)^jD_{n,k}(j).
\]
By \eqref{E:st2-lbnz},
\begin{equation}\label{E:st2-lbnz2}
	R_s
	=\sum_{s\le j\le k}\binom{k}{j}(-\Lambda)^jD_{n,k}(j).
\end{equation}
Define 
\[
	c_1:=\frac{2e}{1+2e},
	\qquad
	c_2:=1+2e,
	\qquad
	j_1:=c_1\frac{k}{\sqrt n}.
\]
Split the sum in \eqref{E:st2-lbnz2} as
\[
	R_s=
	\left(\sum_{s\le j<j_1}+\sum_{j\ge \max\{s,j_1\}}\right)
	\binom{k}{j}(-\Lambda)^jD_{n,k}(j)
	=:S_1+S_2.
\]

For \(S_2\), Lemma~\ref{L:Dnkj-bd1} and the bound \(j!\ge j^je^{-j}\) give
\[
\begin{split}
	|S_2|
	&\le \sum_{j\ge \max\{s,j_1\}}\binom{k}{j}(2\Lambda)^j
	\le \sum_{j\ge \max\{s,j_1\}}\frac{k^j}{j!}(2\Lambda)^j\\
	&\le \sum_{j\ge \max\{s,j_1\}}
	\left(\frac{2ek\Lambda}{j}\right)^j
	\le \sum_{j\ge \max\{s,j_1\}}
	\left(\frac{2e\sqrt n\,\Lambda}{c_1}\right)^j.
\end{split}
\]

For \(S_1\), Lemma~\ref{E:Dnkj-bd2} yields
\[
\begin{split}
	|S_1|
	&\le\sum_{s\le j<j_1}\binom{k}{j}\Lambda^j
	\frac{j!}{\left(1-j\frac{\sqrt n}{k}\right)^{j+1}}
	\left(\frac{\sqrt n}{k}\right)^j\\
	&\le\sum_{s\le j<j_1}\frac{(k\Lambda)^j}{j!}
	\frac{j!}{\left(1-j\frac{\sqrt n}{k}\right)^{j+1}}
	\left(\frac{\sqrt n}{k}\right)^j\\
	&\le \sum_{s\le j<j_1}
	\frac{(\sqrt n\,\Lambda)^j}{(1-c_1)^{j+1}}.
\end{split}
\]
Since $\frac1{1-c_1}=\frac{2e}{c_1}=c_2$, the preceding bounds combine to give
\[
	|R_s|
	\le c_2\sum_{j\ge s}(c_2\sqrt n\,\Lambda)^j.
\]
Now assume that \(k\) satisfies \eqref{E:kk2}. Then $\frac{n}{k}\ge \frac{\log n+6}{2}$, and $e^{-\frac nk}\le e^{-3}n^{-1/2}$. Therefore
\[
	c_2\sqrt n\,\Lambda
	=\frac{c_2\sqrt n\,e^{-\frac nk}}{1-e^{-\frac nk}}
	\le \frac{c_2e^{-3}}{1-e^{-3}}
	<\frac12.
\]
Hence the geometric series is uniformly convergent, and
\[
	|R_s|
	\le 2c_2(c_2\sqrt n\,\Lambda)^s
	=O\lpa{(\sqrt n\,\Lambda)^s}
	=O\Lpa{\Lpa{\frac{\lambda\sqrt n} {k-\lambda}}^s}.
\]
Since \(k\ge \frac{n}{\log n}\) and \(k-\lambda\ge (1-e^{-3})k\), it follows that
\[
	\frac{\lambda\sqrt n} {k-\lambda}
	\le \frac{\lambda\sqrt n}{(1-e^{-3})k}
	\le \frac{\lambda\log n}{(1-e^{-3})\sqrt n}.
\]
Thus
\[
	R_s
	=O\left(\left(\frac{\lambda\log n}{\sqrt n}\right)^s\right),
\]
which is exactly \eqref{E:FD}.
\end{proof}

\begin{remark}
The choice of $f(t) = e^{-nt}$ as a convenient approximation to $(1-t)^n$ in our application of Leibniz's formula is not unique. One may consider alternative functions $f$ that admit the local expansion $f(t) = 1-nt+\cdots$ for small $t$, while also possessing desirable features such as simple or easily computable finite differences. However, the systematic construction of such functions is far from straightforward. 
\end{remark}

\begin{remark}[Coupon collector problem]\label{R:ticket-fd}
The classical \emph{coupon collector's problem}---asks for the probability that, after $n$ independent uniform random draws from $k$ coupon types, every type has been observed at least once. A natural generalization allows $s$ distinct coupons to be drawn simultaneously at each stage, with stages remaining independent. The probability that all $k$ types have appeared after $n$ such stages is then given by inclusion–exclusion as (see \cite[Book~II, \S~4]{Laplace1820})
\[
	M_s(n,k):=\sum_{0\le j\le k}\binom{k}{j}(-1)^j\Pi_s(j),
	\qquad (n,k\ge1,\ 1\le s\le k),
\]
where (an empty product being equal to $1$)
\[
    \Pi_s(x) := \prod_{0\le m<s}\Lpa{1-\frac{x}{k-m}}^{n}.
\]
In particular, $s=1$ recovers the classical case and satisfies $M_1(n,k)=S(n,k)$. The differencing arguments developed above extend naturally to $M_s$; since this generalization plays no role in what follows, we omit the details.
\end{remark}

\subsection{Asymptotic nature of the expansion \eqref{E:st2-lbnz}, II}

We already established the asymptotic nature of the identity \eqref{E:st2-lbnz} in Theorem~\ref{T:finite-diff-exp}, yet the terms in the expansion remain less obvious as far as their asymptotic smallness is concerned. In this subsection, we look more closely (not crude bounds) at the terms in the expansion \eqref{E:FD}. 

\begin{itemize}

\item If $1\le k\le \frac{n}{\log n}$, then $\lambda=o(1)$, and we obtain, by Lemma~\ref{E:Dnkj-bd2},
\begin{align}\label{E:ksmall}
	\binom{k}{j} (-\Lambda)^j D_{n,k}(j)
	=O\left( \lambda^j \Lpa{\frac{\sqrt{n}}k}^j\right)
	=O\lpa{n^{-\frac j2}},
\end{align}
for $j=1,2,\dots$, showing that \eqref{E:FD} is also an asymptotic expansion for smaller values of $k$.

\item If $\frac{n}{\log n}\le k\le \frac{2n}{\log n}$, then 
\[
	e^{\frac nkl}\Lpa{1-\frac lk}^n
	= \exp\Lpa{-(1+o(1))\frac{nl^2}{2k^2}}
	= \exp\Lpa{-(1+o(1))\frac{\rho l^2}{2k}}.
\]
We then get, for each $j$,
\[
	D_{n,k}(j) = \sum_{0\le l\le j}\binom{j}{l}
	(-1)^{j-l}\exp\Lpa{-\frac{\rho l^2}{2k}(1+o(1))}.
\]
Now for any $\ve>0$ 
\begin{align*}
	\sum_{0\le l\le j}\binom{j}{l}
	(-1)^{j-l}e^{-\frac12\ve l^2}
	&= \frac1{\sqrt{2\pi}}
	\int_{-\infty}^\infty e^{-\frac12u^2}
	\lpa{e^{\sqrt{\ve}iu}-1}^j\dd u\\
	&= \frac1{\sqrt{2\pi}}
	\int_{-\infty}^\infty e^{-\frac12u^2}
	\lpa{\sqrt{\ve} iu}^j
	\lpa{1+\tfrac12\sqrt{\ve}\,iu+\cdots}^j\dd u.
\end{align*}
Thus 
\begin{equation*}
	\sum_{0\le l\le j}\binom{j}{l}
	(-1)^{j-l}e^{-\frac12\ve l^2}
	\sim \begin{cases}
		\frac{1}{\sqrt{\pi}}(-2\ve)^{\frac12j}
		\Gamma\lpa{\frac12(j+1)},
		&\text{if $j$ is even};\\
		\frac{j}{2\sqrt{\pi}}(-2\ve)^{\frac12(j+1)}
		\Gamma\lpa{\frac12(j+2)}, & \text{if $j$ is odd}.
	\end{cases}
\end{equation*}
It follows that
\begin{align*}
	\binom{k}{j}(-\Lambda)^j
	D_{n,k}(j) = O\Lpa{\Lpa{\frac{\rho}
	{k}}^{\cl{\frac12j}}\lambda^j}
	= O\Lpa{\frac{(\log n)^{2\cl{\frac12j}}}
	{n^{\cl{\frac12j}}}\lambda^j}
	\qquad(j=0,1,\dots).
\end{align*}
\end{itemize}
This means that to achieve an error of order $n^{-m}$ for approximating $S(n,k)$, we need to use at least $2m+1$ terms in the expansion \eqref{E:FD}. In particular, since
\[
    D_{n,k}(1)=e^{\frac nk}\lpa{1-\tfrac1k}^n-1,\qquad
    D_{n,k}(2)=1-2e^{\frac nk}\lpa{1-\tfrac1k}^n
	+e^{\frac{2n}k}\lpa{1-\tfrac2k}^n,
\]
we have
\begin{equation}\label{E:FD2}
	\begin{split}
	\frac{S(n,k)}{\lpa{1-\frac{\lambda}k}^k}
	&= 1+\frac{k\lambda}{k-\lambda}
	\Lpa{1-e^{\frac nk}\lpa{1-\tfrac1k}^n}\\
	&\quad +\frac{k(k-1)\lambda^2}
	{2(k-\lambda)^2}
	\Lpa{1-2e^{\frac nk}\lpa{1-\tfrac1k}^n
	+e^{\frac{2n}k}\lpa{1-\tfrac2k}^n}
	+O\Lpa{\frac{(\log n)^4}{n^2}\lambda^2(1+\lambda^2)}.
	\end{split}
\end{equation}
Although the terms in the expansion \eqref{E:FD} become more intricate, their numerical performance remains effective, particularly when $k$ lies near the central range (e.g., the mode of $\Stirling{n}{k}$ or the mean of the Stirling distribution). Unlike the original sum definition \eqref{E:Snk-sum}, whose terms are simpler but suffer from exponential cancellations, the expansion \eqref{E:FD} avoids such issues and improves numerical precision as more terms are included.

\begin{figure}[!ht]
\begin{center}
\includegraphics[width=12cm]{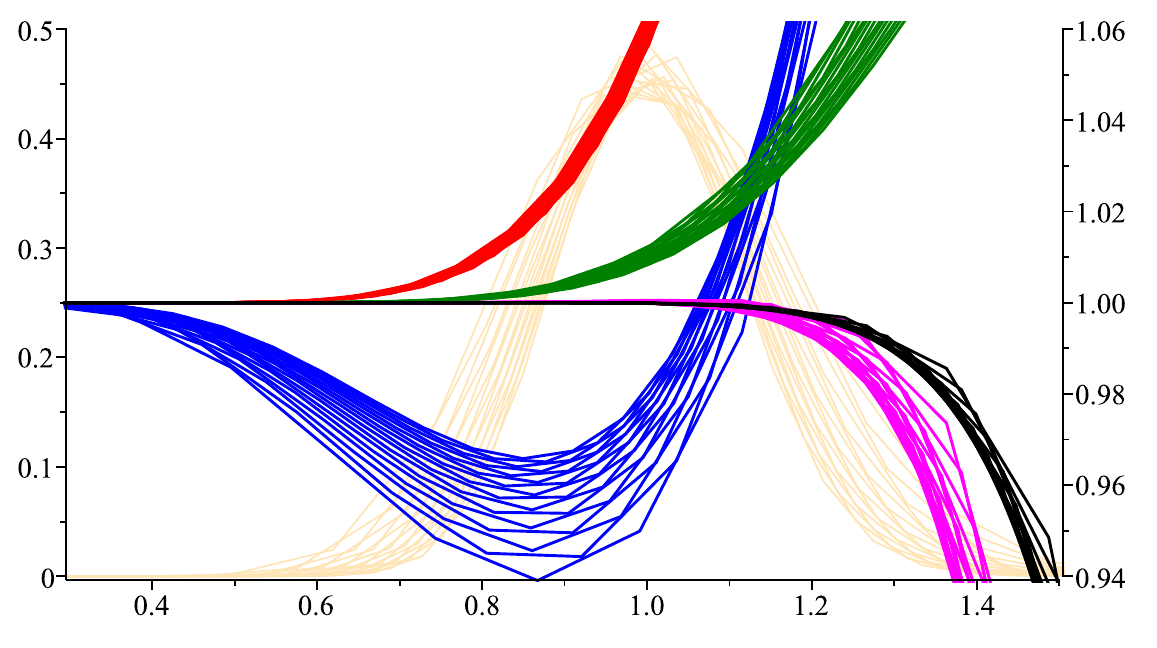}\\[0.5ex]
{\footnotesize
\textcolor{blue}{\raisebox{0.45ex}{\rule{1.6em}{1.0pt}}\, $s=1$} \quad
\textcolor{red}{\raisebox{0.45ex}{\rule{1.6em}{1.0pt}}\, $s=2$} \quad
\textcolor{green!50!black}{\raisebox{0.45ex}{\rule{1.6em}{1.0pt}}\, $s=3$} \quad
\textcolor{magenta}{\raisebox{0.45ex}{\rule{1.6em}{1.0pt}}\, $s=4$} \quad
\textcolor{black}{\raisebox{0.45ex}{\rule{1.6em}{1.0pt}}\, $s=5$} \quad
\textcolor{orange!60}{\raisebox{0.45ex}{\rule{1.6em}{1.0pt}}\, densities}}
\end{center}
 \caption{\emph{Illustrating the asymptotic expansion from equation \eqref{E:FD}, this figure presents the Stirling numbers of the second kind for \(n \in \{20,22,\cdots,50\}\). The colored curves, referencing the right $y$-axis and plotted on a normalized $x$-axis variable centered on the asymptotic mean \(\frac{n}{W(n)}-1\), represent the ratio of normalized Stirling numbers \(S(n, k)\) to this expansion. The colors correspond to approximation orders \(s\): blue (\(s=1\)), red (\(s=2\)), green (\(s=3\)), magenta (\(s=4\)), and black (\(s=5\)). Additionally, the yellow curves, referencing the left $y$-axis, are normalized histograms showing the smooth convergence of the distribution shape toward a normal bell curve.}}\label{F:fin_dif}
\end{figure}

Figure~\ref{F:fin_dif} illustrates the numerical performance of the asymptotic expansion \eqref{E:FD}. The plots there exhibit the characteristic behavior of asymptotic expansions: although higher-order terms can yield greater accuracy near the mode, the expansion becomes increasingly unstable and ultimately divergent beyond the region of uniformity---as seen on the left portion of the ($s=3$) curve (green).

\section{A Poisson-Charlier expansion}
\label{S:PC}

The asymptotic expansion \eqref{E:FD} we derived in the last section encapsulates rich information about the quantity $D_{n,k}(j)$ (see \eqref{E:Dnkj}), which is simply the $j$th backward difference of the function $g(x)$ (see \eqref{E:fg}) evaluated at $x=j$. In this section, we explore a rearrangement based on the Taylor expansion of $g(x)$. This approach leads to a representation involving \emph{Charlier polynomials}, which are, up to scaling factors, the Taylor coefficients of $e^{-z}\lpa{1+\frac za}^b$ in $z$ for constants $a$ and $b$. Such polynomials (with minor sign changes) have been widely used in the context of de-Poissonization and its algorithmic applications; see \cite{Hwang2010} and the references therein for more information.

\subsection{Taylor expansion of $(e^x(1-x))^n$}

We begin with the Taylor expansion
\begin{align} \label{E:Charlier}
	(e^x(1 - x))^n 
	= \sum_{m \ge 0} \frac{\tau_m(n)}{m!}\, x^m 
	= 1 - \frac{n}{2}\,x^2 - \frac{n}{3}\,x^3 
	+ \frac{n(n - 2)}{8}\,x^4 + \cdots,
\end{align}
or 
\[
	g(x) 
	= \sum_{m \ge 0} \frac{\tau_m(n)}{m!} 
	\Lpa{\frac{x}{k}}^m.
\]

\begin{lemma}
\label{L:tau-rr} The polynomials $\tau_m(n)$ satisfy the recurrence
\begin{align}\label{E:tau-rr}
	\tau_m(n) 
	= (m - 1)( \tau_{m - 1}(n) - n \tau_{m - 2}(n)),
	\qquad(m\ge2),
\end{align}
with initial conditions \(\tau_0(n) = 1\) and \(\tau_1(n) = 0 \).		
\end{lemma}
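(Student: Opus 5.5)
The plan is to derive a first-order linear differential equation for the generating function and then read off coefficients. Set
\[
	h(x) := (e^x(1-x))^n = \sum_{m\ge0}\frac{\tau_m(n)}{m!}\,x^m .
\]
Logarithmic differentiation gives
\[
	\frac{h'(x)}{h(x)} = n\Lpa{1-\frac{1}{1-x}} = -\frac{nx}{1-x},
\]
so $h$ satisfies
\[
	(1-x)\,h'(x) = -nx\,h(x).
\]
This is an identity between power series convergent for $|x|<1$ (indeed $h$ is entire, being a polynomial times an exponential), so it can be compared coefficient by coefficient.

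Next, insert $h(x)=\sum_m \tau_m x^m/m!$ and extract the coefficient of $x^{m-1}$ for $m\ge2$. The contribution of $h'$ is $\tau_m/(m-1)!$, and the contribution of $-xh'$ is $-\tau_{m-1}/(m-2)!$. On the right-hand side, $-nxh$ contributes $-n\,\tau_{m-2}/(m-2)!$. Equating the two sides and multiplying through by $(m-1)!$ gives
\[
	\tau_m - (m-1)\tau_{m-1} = -n(m-1)\tau_{m-2},
\]
which is exactly \eqref{E:tau-rr}.

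The initial values come from $h(0)=1$, which gives $\tau_0=1$, and from $h'(0)=0$ (the coefficient of $x^0$ in the ODE, or directly $h'(0)=-n\cdot 0\cdot h(0)$), which gives $\tau_1=0$. As a check, the recurrence yields $\tau_2=-n$, $\tau_3=2(-n)=-2n$, and $\tau_4=3(-2n+n^2)$. These correspond to the series coefficients $-n/2$, $-n/3$ and $n(n-2)/8$, matching \eqref{E:Charlier}.

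There is no real obstacle in this argument. The only points needing care are the index bookkeeping in the coefficient extraction, where the factors $(m-1)!$ versus $(m-2)!$ must be tracked, and confirming that the ODE holds as a formal power series identity, which is immediate.
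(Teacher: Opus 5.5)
Your argument is correct, and it takes a different route from the paper. The paper never writes a differential equation for $h(x)=(e^x(1-x))^n$. Instead it uses $(1-x)^n=n![z^n]e^{(1-x)z}$ to rewrite the coefficient as a second coefficient extraction, $\tau_m(n)=n![z^n](n-z)^me^z$. It then expands $z\frac{d}{dz}\bigl((n-z)^{m-1}e^z\bigr)$ with $z=n-(n-z)$ and uses $[z^n]zf'(z)=n[z^n]f(z)$ to read off the recurrence.

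Your first-order equation $(1-x)h'(x)=-nx\,h(x)$ is shorter and more transparent. The coefficient comparison is a single line, and the initial values come from the same identity. Nothing depends on $n$ being a nonnegative integer: the equation holds as a formal power-series identity for arbitrary $n$, so the recurrence holds for all $n$. By induction, so does the polynomiality of $\tau_m$ with degree $\lfloor m/2\rfloor$.

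What the paper's detour buys is an explicit representation,
$\tau_m(n)=n![z^n](n-z)^me^z=\sum_{0\le i\le m}\binom{m}{i}(-1)^i n^{m-i}n^{\underline{i}}$.
This exhibits $\tau_m$ as a rescaled Charlier polynomial, tying it to the Poisson--Charlier expansion named in that section. Your proof gives the recurrence but not this structural identification.
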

The recurrence \eqref{E:tau-rr} implies, by induction, that \( \tau_m(n) \) is a polynomial in \( n \) of degree \( \lfloor m/2 \rfloor \).
\begin{proof}
By definition $\tau_m(n)=m![x^m]e^{nx}(1-x)^n$ and by the relation $(1 - x)^n = n![z^n] e^{(1 - x)z}$, we obtain
\[
	\tau_m(n)
	= m!n! [x^mz^n] \lpa{e^{nx} e^{(1 - x)z}} 
	= n! [z^n] \lpa{m![x^m] e^{x(n - z)} e^z} 
	= n![z^n] (n - z)^m e^z.
\]
Differentiating the function \( f(z) = (n - z)^{m-1} e^z \) and multiplying by \( z = n-(n-z) \), we obtain
\begin{align*}
    &z\frac{d}{dz}\lpa{(n-z)^{m-1} e^z}\\
	&\qquad =-(m-1)n(n-z)^{m-2} e^z
	+(n+m-1)(n-z)^{m-1}e^z-(n-z)^{m} e^z.
\end{align*}
Extracting the coefficients on both sides yields
\begin{align*}
  n\tau_{m-1}(n)
	= -(m-1)n\tau_{m-2}(n) + (n+m-1)\tau_{m-1}(n)-\tau_{m}(n).
\end{align*}
Then the recurrence \eqref{E:tau-rr} follows from rearranging terms.
\end{proof}

\subsection{The Poisson-Charlier expansion, I: Identity}

We now expand $S(n,k)$ using the polynomials $\tau_m$ as follows. 
\begin{prop} \label{P:PC}
The Stirling partition numbers satisfy the identity
\begin{align}\label{E:PC}
	S(n,k)
	= \sum_{m\ge0}\frac{\tau_m(n)}{m!k^m}\,Q_{n,k}(m),
\end{align}	
for $1\le k\le n$, where
\begin{equation}\label{E:Qmnk}
	Q_{n,k}(m) 
	= \sum_{0\le j \le k}\binom{k}{j}
	(-1)^j j^m e^{-\frac nkj}.
\end{equation}
\end{prop}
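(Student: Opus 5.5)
We prove \eqref{E:PC} by substituting the Taylor expansion \eqref{E:Charlier} of $g$ into the sieve formula and interchanging two sums. The only analytic point is to justify that interchange.

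\textbf{Step 1: rewrite each summand.} Starting from \eqref{E:Snk-sum0}, write each summand using the factorization \eqref{E:fg}:
\[
	\Lpa{1-\frac jk}^n = e^{-\frac nk j}\Lpa{e^{\frac jk}\Lpa{1-\frac jk}}^n
	= e^{-\frac nk j}\sum_{m\ge0}\frac{\tau_m(n)}{m!}\Lpa{\frac jk}^m ,
\]
where the inner series is \eqref{E:Charlier} evaluated at $x=j/k\in[0,1]$.

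\textbf{Step 2: convergence of the Taylor series.} The function $(e^x(1-x))^n$ is entire in $x$, since it is a polynomial times an exponential. Its Taylor series therefore converges absolutely for every complex $x$, and in particular at $x=j/k$ for each $0\le j\le k$.

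\textbf{Step 3: interchange the sums.} Multiply by $\binom kj(-1)^j$ and sum over $0\le j\le k$. The outer sum is finite and each inner series converges absolutely, so the order of summation can be swapped:
\[
	S(n,k)=\sum_{m\ge0}\frac{\tau_m(n)}{m!\,k^m}\sum_{0\le j\le k}\binom kj(-1)^j j^m e^{-\frac nk j},
\]
and the inner sum is exactly $Q_{n,k}(m)$ from \eqref{E:Qmnk}.

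\textbf{Step 4: absolute convergence of the $m$-series.} For completeness we show that $\sum_m |\tau_m(n)|\,|Q_{n,k}(m)|/(m!k^m)$ is finite. We have $|Q_{n,k}(m)|\le\sum_j\binom kj j^m\le 2^k k^m$. Cauchy's estimate on the circle $|x|=R$ for the entire function $(e^x(1-x))^n$ gives
\[
	\frac{|\tau_m(n)|}{m!}\le R^{-m}\max_{|x|=R}\bigl|e^x(1-x)\bigr|^n .
\]
Taking $R=2$ makes the terms decay like $2^{-m}$, which gives a convergent majorant. This also confirms the range $1\le k\le n$ imposes no restriction beyond $k\ge1$.

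I expect no real obstacle; the only care needed is the interchange in Step 3, which Steps 2 and 4 settle.
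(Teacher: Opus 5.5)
Your proof is correct and follows the paper's own argument: substitute the Taylor expansion \eqref{E:Charlier} at $x=j/k$ into the sieve sum and interchange the finite $j$-sum with the $m$-series, with your Step~2 supplying the convergence justification the paper leaves implicit. Step~4 is correct but unnecessary, since swapping a finite sum with finitely many convergent series requires only that each series converge, not absolute convergence of the resulting $m$-series.
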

The expansion is similar to \eqref{E:st2-lbnz} but with terms arranged differently. In addition, the true value of the expansion \eqref{E:PC} lies in its asymptotic nature; see \eqref{E:PC-an} below. 
\begin{proof}
The expansion \eqref{E:PC} is obtained by substituting the expansion \eqref{E:Charlier} into \eqref{E:Snk-sum}, and by 
interchanging the sums.
\end{proof}
\begin{lemma}
The quantity $Q_{n,k}(m)$ satisfies
\begin{align}\label{E:Qnkm}
  Q_{n,k}(m)= \Lpa{1-\frac{\lambda}k}^{k}
	\sum_{0\le l\le \min\{m,k\}}\Stirling{m}{l}
	\binom{k}{l}l!(-\Lambda)^l,
\end{align}
where $\Lambda$ is defined in \eqref{E:Lambda}.
\end{lemma}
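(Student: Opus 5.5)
Write $q:=e^{-\frac nk}=\frac{\lambda}{k}$, so that $Q_{n,k}(m)=\sum_{0\le j\le k}\binom{k}{j}(-1)^j j^m q^j$. The plan is to express $j^m$ through falling factorials and then evaluate the resulting sums in closed form. We use the classical expansion
\[
	j^m=\sum_{0\le l\le m}\Stirling{m}{l}\, j(j-1)\cdots(j-l+1).
\]
This identity holds because both sides count the maps from an $m$-set to a $j$-set, classified by the size $l$ of the image. The terms with $l>k$ vanish after multiplication by $\binom{k}{j}$, since $j\le k$ forces the falling factorial to be zero. Hence the summation index may be restricted to $l\le\min\{m,k\}$.

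Substituting and interchanging the finite sums gives
\[
	Q_{n,k}(m)=\sum_{0\le l\le \min\{m,k\}}\Stirling{m}{l}\sum_{l\le j\le k}\binom{k}{j}\frac{j!}{(j-l)!}(-q)^j .
\]
We then use $\binom{k}{j}\frac{j!}{(j-l)!}=\binom{k}{l}l!\binom{k-l}{j-l}$ and shift the index to $i=j-l$. The inner sum becomes
\[
	\binom{k}{l}l!\,(-q)^l\sum_{0\le i\le k-l}\binom{k-l}{i}(-q)^i=\binom{k}{l}l!\,(-q)^l(1-q)^{k-l}.
\]

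Finally we factor out $(1-q)^k=\bigl(1-\frac{\lambda}{k}\bigr)^k$. The remaining factor is $\bigl(\frac{-q}{1-q}\bigr)^l=(-\Lambda)^l$, by the definition \eqref{E:Lambda} of $\Lambda$, and this yields \eqref{E:Qnkm}.

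No step is a genuine obstacle, since the argument is finite algebra. The only points needing care are the range $l\le\min\{m,k\}$ and the conventions $0^0=1$ and $\Stirling{0}{0}=1$ for the case $m=0$, where the formula reduces to $Q_{n,k}(0)=(1-q)^k$.
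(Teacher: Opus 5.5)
Your proof is correct and follows the paper's argument. Both expand $j^m$ in falling factorials with coefficients $\Stirling{m}{l}$, interchange the finite sums, and evaluate the inner sum in closed form as $\binom{k}{l}l!(-q)^l(1-q)^{k-l}$. The only cosmetic difference is in that last evaluation: the paper computes the inner sum as $x^l\frac{d^l}{dx^l}(1-x)^k$ at $x=e^{-n/k}$, whereas you use $\binom{k}{j}\frac{j!}{(j-l)!}=\binom{k}{l}l!\binom{k-l}{j-l}$ and the binomial theorem, which is equivalent.
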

\begin{proof}
By the standard identity $j^m = \sum_{0\le l\le m} 
\Stirling{m}{l}j^{\underline{l}}$, we obtain
\begin{align*}
	Q_{n,k}(m) 
	&= \sum_{0\le l\le m}\Stirling{m}{l}
	\sum_{0\le j\le k}\binom{k}{j}(-1)^j
	j^{\underline{l}}e^{-\frac nkj},
\end{align*}
for $m\ge0$. The inner sum in the above identity can be expressed as
\[
	\sum_{0\le j\le k}\binom{k}{j}(-1)^j
	j^{\underline{l}}e^{-\frac nkj}
	= x^l\frac{d^l}{dx^l}(1-x)^k
	\Biggl|_{x=e^{-\frac{n}{k}}}
	=\binom{k}{l}l!(-1)^l e^{-\frac nkl}
	\lpa{1-e^{-\frac nk}}^{k-l},
\]
which then yields \eqref{E:Qnkm}.
\end{proof}

Although both \eqref{E:PC} and \eqref{E:Qnkm} are recursive through their dependence on $\Stirling{m}{l}$, the asymptotic usefulness of \eqref{E:PC} will become clear in the asymptotic form \eqref{E:PC-an} below; by contrast, \eqref{E:Qnkm} involves only $\Stirling{m}{l}$ with bounded $(m,l)$.

\subsection{Taylor remainder of $(e^x(1-x))^n$}

\begin{lemma}\label{tau_series_estimate}
For all \( 0\le x \le 1 \) and \( m_0 \ge 1 \), the following estimate holds:
\begin{align}\label{E:tau-effective}
	\left| \bigl(e^x(1 - x)\bigr)^n 
	- \sum_{0 \le m < m_0} \frac{\tau_m(n)}{m!}x^m \right| 
	\le m_0 2^{m_0}(x\sqrt{n})^{m_0}.
\end{align}
\end{lemma}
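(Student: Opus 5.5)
The plan is to combine the coefficient majorization already used in Lemma~\ref{E:Dnkj-bd2} with a two-case argument, according to whether $x\sqrt n$ is small or not. Write $y:=x\sqrt n$ throughout.

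\emph{Step 1 (coefficient bound).} Since $e^x(1-x)=\exp\bigl(-\sum_{m\ge2}\frac{x^m}{m}\bigr)$ for $|x|<1$, we have
\[
	\bigl(e^x(1-x)\bigr)^n=\exp\Bigl(-\sum_{m\ge2}\frac{n}{m}x^m\Bigr).
\]
For $m\ge2$ we have $n\le n^{m/2}$. Comparing coefficients termwise exactly as in \eqref{E:coeff-g} then gives
\[
	\Bigl|\frac{\tau_m(n)}{m!}\Bigr|
	\le [x^m]\exp\Bigl(\sum_{m'\ge2}\frac{(\sqrt n\,x)^{m'}}{m'}\Bigr)
	\le [x^m]\frac{1}{1-\sqrt n\,x}
	= n^{m/2},
\]
so that $\bigl|\frac{\tau_m(n)}{m!}x^m\bigr|\le y^m$ for all $m\ge0$. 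Moreover, $(e^x(1-x))^n$ is a polynomial times an exponential, hence entire, so its Taylor series \eqref{E:Charlier} converges to it for every $x\in[0,1]$.

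\emph{Step 2 (case $y\le\frac12$).} The left side of \eqref{E:tau-effective} is the Taylor tail, and it is bounded by
\[
	\sum_{m\ge m_0}y^m\le \frac{y^{m_0}}{1-y}\le 2y^{m_0}.
\]
Since $m_0\ge1$, this is at most $m_0 2^{m_0}y^{m_0}$.

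\emph{Step 3 (case $y>\frac12$).} Here I would not use the divergent tail and would instead bound the two pieces separately.
\begin{itemize}
	\item The function itself: $0\le e^x(1-x)\le1$ on $[0,1]$, because $1-x\le e^{-x}$. Hence $0\le(e^x(1-x))^n\le 1$.
	\item The partial sum: for $m<m_0$ we have $y^m\le 2^{m_0}y^{m_0}$. Indeed, if $y\ge1$ then $y^m\le y^{m_0}$, and if $\frac12<y<1$ then $y^m\le1<(2y)^{m_0}$.
\end{itemize}
The key saving is that $\tau_1(n)=0$ (Lemma~\ref{L:tau-rr}). So the partial sum has at most $m_0-1$ nonzero terms, namely $m=0$ and $2\le m<m_0$. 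Its absolute value is therefore at most $(m_0-1)2^{m_0}y^{m_0}$.

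Since both quantities are nonnegative-bounded in the appropriate sense, the difference is controlled by the larger of the two bounds, and in any case by their sum:
\[
	2^{m_0}y^{m_0}+(m_0-1)2^{m_0}y^{m_0}=m_0 2^{m_0}y^{m_0}.
\]
This is exactly the right side of \eqref{E:tau-effective}.

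The only delicate point is matching the constant $m_0$ exactly in Step 3. A naive count of $m_0$ partial-sum terms plus the function would give $(m_0+1)2^{m_0}$. The vanishing of $\tau_1$ supplies precisely the missing unit.
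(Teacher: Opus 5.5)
Your Steps 1 and 2 are the paper's argument: the same majorant $\frac{1}{1-\sqrt n\,x}$ gives $|\tau_m(n)/m!|\le n^{m/2}$, and the same geometric tail bound handles $x\sqrt n\le\frac12$. You differ from the paper in how Step~3 saves the extra unit, and that bookkeeping fails at the boundary case $m_0=1$. There the partial sum is $\tau_0(n)=1$, so it has one nonzero term, not $m_0-1=0$. The vanishing of $\tau_1$ gives you nothing, because there is no $m=1$ term to drop. Your sum bound then reads $|(e^x(1-x))^n|+1\le 2$, while the target is $2x\sqrt n$, which is smaller than $2$ whenever $\frac12<x\sqrt n<1$. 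Your other justification, that the difference is controlled by the larger of the two bounds, is not valid in general either. It requires the function and the partial sum to have the same sign, and the partial sum can be negative: for $m_0=3$ it equals $1-\frac n2x^2$, which is negative once $x\sqrt n>\sqrt2$. So only the sum bound is available, and it is short by one unit at $m_0=1$.

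The fix is the paper's device: pair the function with the constant term instead of bounding them separately. Since $0\le e^x(1-x)\le1$ on $[0,1]$, you have $|(e^x(1-x))^n-1|\le1$. Hence the left side of \eqref{E:tau-effective} is at most $1+\sum_{1\le m<m_0}(x\sqrt n)^m=\sum_{0\le m<m_0}(x\sqrt n)^m$. This in turn is at most $m_0\max\{1,(x\sqrt n)^{m_0}\}\le m_0 2^{m_0}(x\sqrt n)^{m_0}$. The argument holds uniformly for all $m_0\ge1$ and never needs $\tau_1(n)=0$. Alternatively, keep your argument for $m_0\ge2$, where the count $m_0-1$ is correct. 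Then treat $m_0=1$ directly: $|(e^x(1-x))^n-1|\le1<2x\sqrt n$.
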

\begin{proof}
Our proof relies on the coefficient bound 
\begin{equation}\label{E:tau-bound}
	\left|\frac{\tau_m(n)}{m!}\right| \le n^{\frac m2},
\end{equation}
which follows from the same proof as used in \eqref{E:coeff-g} by using the formal expansion
\[
	\sum_{m \ge 0} \frac{\tau_m(n)}{m!}x^m
	=\bigl(e^x(1 - x)\bigr)^n
	=\exp\left(-n\sum_{r\ge 2}\frac{x^r}{r}\right),
\] 
and then dominating its coefficients by those of \(\frac1{1-\sqrt{n}x}\). Since the function \(\bigl(e^x(1 - x) \bigr)^n\) is entire, the difference between the function and its truncated Taylor series is exactly the tail sum for all \(x\). We divide the analysis into two cases.

Consider first the case where \(x\sqrt{n} < \frac{1}{2}\). Bounding the remainder directly using \eqref{E:tau-bound} yields
\[
\begin{split}
	\left| \bigl(e^x(1 - x)\bigr)^n 
	- \sum_{0 \le m < m_0} 
	\frac{\tau_m(n)}{m!}x^m \right| 
	&\le \sum_{m \ge m_0} (x\sqrt{n})^m 
	= \frac{(x\sqrt{n})^{m_0}}{1 - x\sqrt{n}}\\
	&< 2(x\sqrt{n})^{m_0}
	\le m_0 2^{m_0}(x\sqrt{n})^{m_0},
\end{split}
\]
where the last inequality holds for all \(m_0 \ge 1\). This establishes \eqref{E:tau-effective}.

Now assume \(x\sqrt{n} \ge \frac{1}{2}\). In this regime, we separate the function and the truncated sum using the triangle inequality. Since \(\tau_0(n) = 1\) and \(0 \le e^x(1 - x) \le 1\) for \(x \in [0,1]\), we have \(\bigl|\bigl(e^x(1 - x)\bigr)^n - 1\bigr| \le 1\). Applying \eqref{E:tau-bound} to the remaining terms gives
\[
\begin{split}
	\left| \bigl(e^x(1 - x)\bigr)^n 
	- \sum_{0 \le m < m_0} \frac{\tau_m(n)}{m!}x^m \right| 
	&\le \bigl|\bigl(e^x(1 - x)\bigr)^n - 1\bigr| 
	+ \sum_{1 \le m < m_0} \left| 
	\frac{\tau_m(n)}{m!} \right| x^m \\
	&\le 1 + \sum_{1 \le m < m_0} (x\sqrt{n})^m \\
	&= \sum_{0 \le m < m_0} (x\sqrt{n})^m.
\end{split}
\]
We bound each term in this finite sum by the maximum possible value. Since \(x\sqrt{n} \ge \frac{1}{2}\) implies \(1 \le 2^{m_0}(x\sqrt{n})^{m_0}\), we obtain
\[
	\sum_{0 \le m < m_0} (x\sqrt{n})^m 
	\le m_0 \max\bigl\{1, (x\sqrt{n})^{m_0}\bigr\}
	\le m_0 2^{m_0}(x\sqrt{n})^{m_0}.
\]
This completes the proof.
\end{proof}

\subsection{Asymptotic nature of the Poisson-Charlier expansion, I}

We now justify the asymptotic nature of the Poisson-Charlier expansion \eqref{E:PC}. 
\begin{thm}\label{Poi_Char_expansion}
If 
\begin{align*}
	\frac{n}{W(n)}\le k\le \frac{2n}{\log n+6},
	\eqtext{or}
	1 \le \lambda \le \frac{2}{e^3}\cdot
	\frac{\sqrt{n}}{\log n+6},
\end{align*}
then
\begin{equation} \label{E:PC-an}
\begin{split}
	S(n,k) = \left( 1-\frac{\lambda}{k}\right)^{k} 
	\left(1+\sum_{2\le m<2s_1}\frac{\tau_m(n)}{m!k^m}
	\,\bar Q_{n,k}(m)
	+O\left(\left(\frac{\lambda\log n}{\sqrt{n}}\right)^{2s_1}
	\right)\right),
\end{split}
\end{equation}
for any fixed \(s_1\ge 1\), where $\bar Q_{n,k}(m) := Q_{n,k}(m) (1-e^{-\frac{n}{k}})^{-k}$ satisfies 
\[
	\bar Q_{n,k}(m)
	=\sum_{0\le j\le m}\binom{k}{j}\Stirling{m}{j}j!
	(-\Lambda)^j.
\]
\end{thm}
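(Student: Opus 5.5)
Our plan is to start from the exact sum \eqref{E:Snk-sum} written as $S(n,k)=\sum_{0\le j\le k}\binom{k}{j}(-1)^j e^{-\frac nk j}\,g(j)$, where $g(j)=(e^{j/k}(1-j/k))^n$.

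\textbf{Step 1: Taylor split.} Apply Lemma~\ref{tau_series_estimate} with $x=j/k\in[0,1]$ and $m_0=2s_1$. This gives
\[
	g(j)=\sum_{0\le m<2s_1}\frac{\tau_m(n)}{m!k^m}j^m + r_j,
	\qquad |r_j|\le 2s_1 2^{2s_1}\Lpa{\frac{j\sqrt n}{k}}^{2s_1}.
\]
Inserting this into the sum and interchanging the finite sums produces exactly
\[
	\sum_{0\le m<2s_1}\frac{\tau_m(n)}{m!k^m}Q_{n,k}(m).
\]
By \eqref{E:Qnkm} this equals $(1-\frac{\lambda}{k})^k\sum_{m<2s_1}\frac{\tau_m(n)}{m!k^m}\bar Q_{n,k}(m)$. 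The terms $m=0,1$ contribute $1$ and $0$, since $\tau_0=1$, $\bar Q_{n,k}(0)=1$ and $\tau_1=0$. The formula for $\bar Q_{n,k}(m)$ stated in the theorem is just \eqref{E:Qnkm} divided by $(1-e^{-n/k})^k$; note that $\Stirling{m}{j}=0$ for $j>m$.

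\textbf{Step 2: bounding the remainder.} The remainder term is $\sum_j\binom{k}{j}(-1)^je^{-\frac nkj}r_j$. The crude bound is
\[
	\sum_j\binom kj e^{-\frac nkj}|r_j|
	\ll \Lpa{\frac{\sqrt n}{k}}^{2s_1}\sum_j\binom kj j^{2s_1}e^{-\frac nkj}.
\]
This sum is $(1+e^{-n/k})^k$ times a polynomial of degree $2s_1$ in $\lambda$, i.e.\ $O(e^{\lambda}(1+\lambda)^{2s_1})$. Dividing by the main factor $(1-\lambda/k)^k\asymp e^{-\lambda}$ costs a factor $e^{2\lambda}$, and that is fatal when $\lambda$ is as large as $\sqrt n/\log n$.

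This is the main obstacle: the remainder must not be bounded absolutely. It has to retain the alternating cancellation.

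\textbf{Step 3: keeping the cancellation.} The remainder $r_j=\sum_{m\ge 2s_1}\frac{\tau_m(n)}{m!k^m}j^m$ is itself given by the convergent Taylor tail, since $g$ is entire. So we may write the exact remainder as $\sum_{m\ge2s_1}\frac{\tau_m(n)}{m!k^m}Q_{n,k}(m)$, which reduces the task to bounding $\bar Q_{n,k}(m)$ for large $m$.

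The cleaner route is to use the finite-difference machinery instead. Write the tail as $(1-\lambda/k)^k\sum_{j}\binom kj(-\Lambda)^j\nabla^j h(x)|_{x=j}$, obtained by the same Leibniz computation as in Proposition~\ref{P:st2-lbnz-prop}, where $h(x)=g(x)-\sum_{m<2s_1}\frac{\tau_m(n)}{m!}(x/k)^m$ is the Taylor tail. The Leibniz identity holds for any function in place of $g$, and $\nabla^j$ of a polynomial of degree $<2s_1$ vanishes for $j\ge 2s_1$. Then bound $|\nabla^jh(j)|$ in two regimes:
\begin{itemize}
\item For $j<j_1=c_1k/\sqrt n$, use Lemma~\ref{finite_diff_deriv} together with the coefficient bound \eqref{E:tau-bound}. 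This gives $|h^{(j)}(x)|\ll j!\,(\sqrt n/k)^{j}\max(1,(x\sqrt n/k)^{2s_1-j})/(1-x\sqrt n/k)^{j+1}$.
\item For larger $j$, use the trivial bound $2^j(1+O(1))$, as in Lemma~\ref{L:Dnkj-bd1}, since $h$ is bounded by $1$ plus a polynomial.
\end{itemize}

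\textbf{Step 4: summing.} Summing exactly as in the proof of Theorem~\ref{T:finite-diff-exp}, the $j<2s_1$ terms contribute $O(\sum_{j<2s_1}(k\Lambda)^j(\sqrt n/k)^{2s_1}(j/k\cdot\sqrt n)^0)$, which is $O((\lambda\sqrt n/k)^{2s_1})$ up to powers of $\lambda\ge1$ that are dominated. The $j\ge2s_1$ terms give a geometric series in $c_2\sqrt n\Lambda<\frac12$, which is $O((\sqrt n\Lambda)^{2s_1})$.

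Finally, $k\ge n/W(n)\ge n/\log n$ gives $\sqrt n\Lambda\ll\lambda\log n/\sqrt n$, which yields \eqref{E:PC-an}. The lower bound $\lambda\ge1$ is used so that the lower-order polynomial factors in $\lambda$ coming from the $j<2s_1$ terms are absorbed into $\lambda^{2s_1}$.
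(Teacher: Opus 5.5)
Your argument is correct and is essentially the paper's proof, reorganized. Writing $g=P+h$ and applying Leibniz to $fh$ is the same as truncating the finite-difference expansion \eqref{E:FD} at $s=2s_1$: your $j\ge 2s_1$ tail is exactly the paper's remainder $R_{2s_1}$, since $\nabla^j h=\nabla^j g$ there. Your $j<2s_1$ terms match the paper's step of expanding $D_{n,k}(j)$ via Lemma~\ref{tau_series_estimate}, with the main term identified through \eqref{E:Qnkm}.

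One slip: in your bound for $h^{(j)}$ the factor should be $\min\{1,(x\sqrt n/k)^{2s_1-j}\}$, not $\max$. This does not propagate, because your Step 4 in fact uses the correct bound $O((\sqrt n/k)^{2s_1})$ for $j<2s_1$.
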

Unlike Theorem~\ref{T:finite-diff-exp}, here we group the terms in pairs because $\tau_m(n)$ is a polynomial in $n$ of degree $\tr{\frac m2}$.
\begin{proof}
We apply Theorem~\ref{T:finite-diff-exp} with $s=2s_1ele$:
\[
\begin{split}
	S(n,k)= \left( 1-\frac{\lambda}k\right)^{k} 
	\Bigl(\sum_{0\le j<2s_1}\binom{k}{j}(-\Lambda)^j
	D_{n,k}(j)+O\Lpa{\Lpa{ 
	\frac{\lambda\log n}{\sqrt{n}}}^{2s_1}}\Bigr). 
\end{split}
\]
By Lemma~\ref{tau_series_estimate}, we can evaluate $D_{n,k}(j)$ as follows. 
\begin{align*}
	D_{n,k}(j)&=\nabla_{\!x}^j 
	\left(e^{\frac nkx}\Lpa{1-\frac{x}{k}}^n\right)
	\Bigl|_{x=j}
	=\sum_{0\le m<2s_1}\frac{\tau_m(n)}{m!k^m}
	\nabla_{\!x}^j x^m \Bigl|_{x=j} 
	+O\Lpa{\frac{n^{s_1}}{k^{2s_1}}}.
\end{align*}
Note that $\nabla_{\!x}^j x^m\bigl|_{x=j} =j! \Stirling{m}{j}$. We then have
\[
\begin{split}
	\sum_{0\le j<2s_1}\binom{k}{j}(-\Lambda)^jD_{n,k}(j)
	&=\sum_{0\le m<2s_1}\frac{\tau_m(n)}{m!k^m}
	\sum_{0\le j<2s_1}\binom{k}{j}(-\Lambda)^j
	j! \Stirling{m}{j}\\
	&\qquad+O\left(\left(\frac{n}{k^2}\right)^{s_1}
	\sum_{0\le j<2s_1}\frac{(k\Lambda)^j}{j!}\right).
\end{split}
\]
The dominant term here equals that of \eqref{E:PC-an} by \eqref{E:Qnkm}. Since $\lambda\ge1$, the $O$-term is also commensurate with the error term in \eqref{E:PC-an}.
\end{proof}

Note that the error term in \eqref{E:PC-an} should be $O(\lambda (\log n)^{2s_1}n^{-s_1})$ when $\lambda<1$ since $\bar Q_{n,k}(m)=O(\lambda)$ for $m\ge1$.

\subsection{Asymptotic nature of the Poisson-Charlier expansion, II}
For $m=1,2,3$, we have
\begin{align*}
	\bar{Q}_{n,k}(1) &= -\frac{\lambda}{1-\frac{\lambda}k},\quad
	\bar{Q}_{n,k}(2) = \frac{\lambda(\lambda-1)}
	{\lpa{1-\frac{\lambda}k}^2}, \quad
	\bar{Q}_{n,k}(3) = -\frac{\lambda(\lambda^2-3\lambda+1)
	+\frac{\lambda^2}{k}}{\lpa{1-\frac{\lambda}k}^3}.
\end{align*}
For large $k$, $\bar{Q}_{n,k}(m)\sim\sum_{1\le j\le m}\Stirling{m}{j}(-\lambda)^j$.

The first few terms of the expansion~\eqref{E:PC-an} are given as follows:
\begin{equation}\label{E:PC-two-terms}
	\frac{S(n,k)}{\lpa{1-\frac{\lambda}{k}}^k}
	=1-\frac{n\lambda(\lambda-1)}{2(k-\lambda)^2}
	\,+ \frac{n\lambda}{3(k-\lambda)^3}
	\Lpa{\lambda^2-3\lambda+1+\frac{\lambda}k}+ \cdots.
\end{equation}
The growth order of the second-order term is less transparent; in special cases, we have
\[
	\frac{S(n,k)}{\lpa{1-\frac{\lambda}{k}}^k}
	= \begin{cases}
		\lpa{1+O\lpa{n^{-1}((\log n)^{2c}+(\log n)^{c+1})}}, &
		\text{if }k=\frac{n}{\log n - c\log\log n},
		c\in\mathbb{R},\\
		\lpa{1+O\lpa{n^{-c}(n^{1-c}+\log n)}}, &
		\text{if }k=\frac{n}{c\log n},
		c>\frac12,\\
		\lpa{1+O\lpa{c^{-1}}}, &
		\text{if }k=\frac{2n}{\log n+\log c}, c\to\infty.
	\end{cases}
\]

The numerical fits are satisfactory when $k$ remains less than $\frac{2n}{\log n}$; see Figure~\ref{F:Poi_Char} for graphical renderings. 
\begin{figure}[!ht]
	\begin{center}
	\includegraphics[width=10cm]{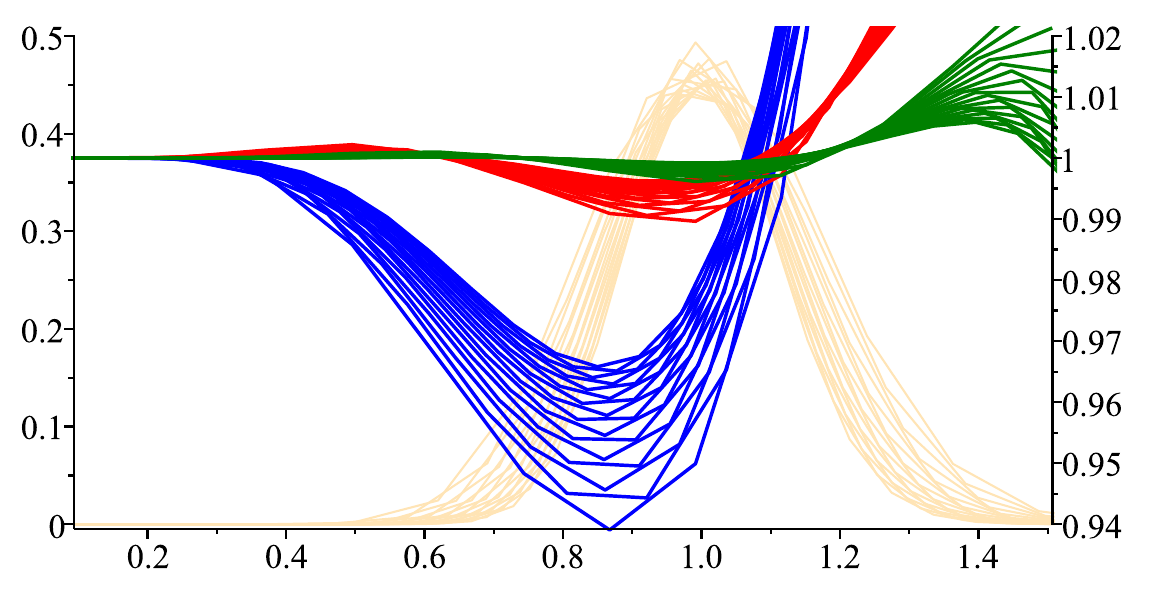}\\[0.5ex]
	{\footnotesize
	\textcolor{blue}{\raisebox{0.45ex}{\rule{1.6em}{1.0pt}}\, $s_1=1$} \quad
	\textcolor{red}{\raisebox{0.45ex}{\rule{1.6em}{1.0pt}}\, $s_1=2$} \quad
	\textcolor{green!50!black}{\raisebox{0.45ex}{\rule{1.6em}{1.0pt}}\, $s_1=3$} \quad
	\textcolor{orange!60!}{\raisebox{0.45ex}{\rule{1.6em}{1.0pt}}\, densities}}
	\end{center}
	\caption{\emph{Ratios of the right-hand side of \eqref{E:PC-an} and $\Stirling{n}{k}$ for $n=20,22,\dots,50$ (plotted against $\frac{n}{W(n)}-1$): blue curves correspond to $s_1=1$, red ones to $s_1=2$, and green ones to $s_1=3$. The yellow lines correspond to the densities of $\Stirling{n}{k}$ for each $n$, which show particularly the convergence of the histograms of $\Stirling{n}{k}$ towards normal for $k$ near the mode of the distribution.}}\label{F:Poi_Char}
\end{figure}

\subsection{Numerical comparisons}

The formulas \eqref{E:FD} in Theorem \ref{T:finite-diff-exp} and \eqref{E:PC-an} in Theorem \ref{Poi_Char_expansion} employ finite difference and Poisson-Charlier polynomial approaches, respectively, with numerical values shown in Figures \ref{F:fin_dif} and \ref{F:Poi_Char}. We abbreviate these as FD and PC. Since Theorem \ref{Poi_Char_expansion} extends Theorem \ref{T:finite-diff-exp}, their structures are similar. Using the first $s-1$ terms of FD and the first $2s_1-1$ terms of PC yields error terms of the same order; see Figure~\ref{contrast_FD_PC}. More figures below show that FD performs better below the mean while PC is superior above it, with PC's variability relative to FD in creasing dramatically as $k$ grows.

\begin{figure}[!ht]
\begin{center}
\begin{tabular}{cc}
\includegraphics[height=4.5cm]{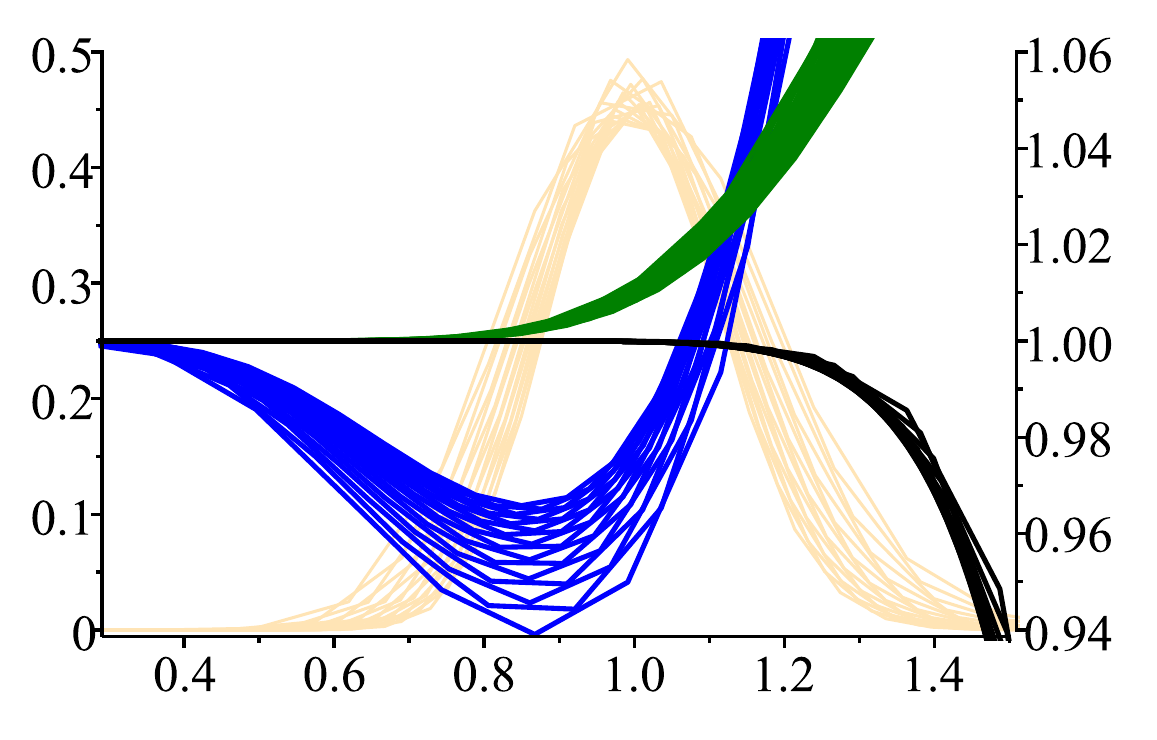}
& \includegraphics[height=4.5cm]{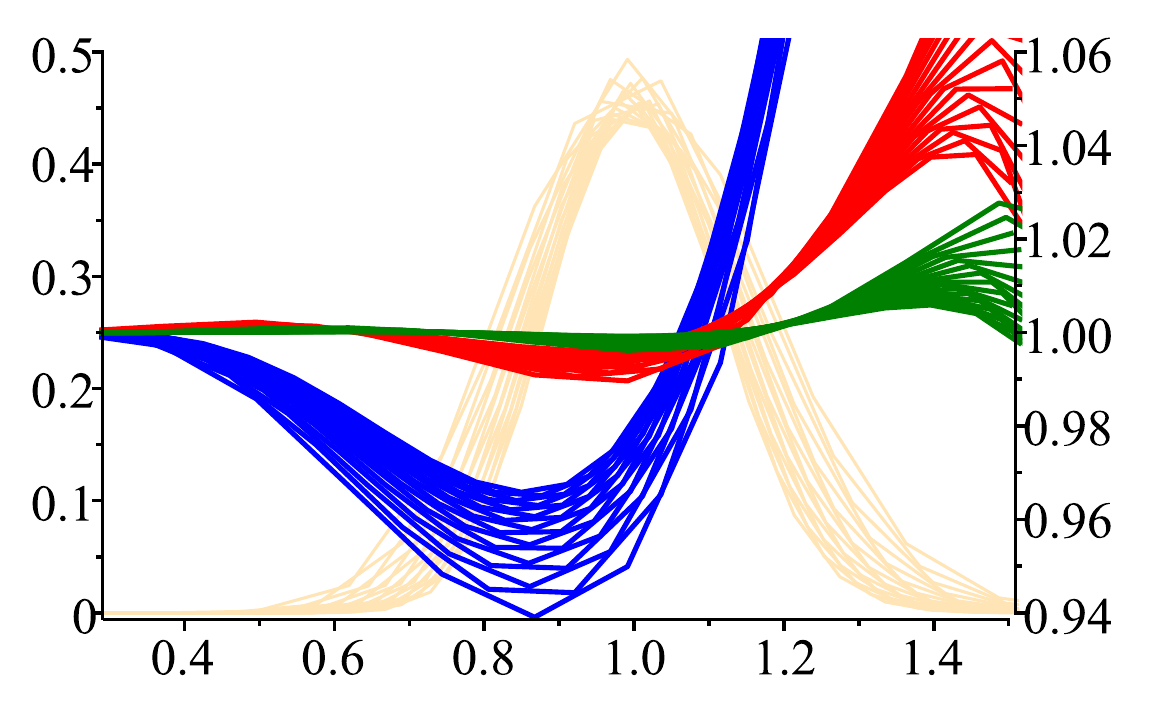}\\
FD in \eqref{E:FD} with&
PC in \eqref{E:PC-an} with\\
{\footnotesize
\textcolor{blue}{\raisebox{0.45ex}{\rule{1.6em}{1.0pt}}\, $s=1$}\,
\textcolor{green!50!black}{\raisebox{0.45ex}{\rule{1.6em}{1.0pt}}\, $s=3$}\,
\textcolor{black}{\raisebox{0.45ex}{\rule{1.6em}{1.0pt}}\, $s=5$}\,
\textcolor{orange!60}{\raisebox{0.45ex}{\rule{1.6em}{1.0pt}}\, densities}}&
{\footnotesize
\textcolor{blue}{\raisebox{0.45ex}{\rule{1.6em}{1.0pt}}\, $s_1=1$}\,
\textcolor{red}{\raisebox{0.45ex}{\rule{1.6em}{1.0pt}}\, $s_1=2$}\,
\textcolor{green!50!black}{\raisebox{0.45ex}{\rule{1.6em}{1.0pt}}\, $s_1=3$}\,
\textcolor{orange!60}{\raisebox{0.45ex}{\rule{1.6em}{1.0pt}}\, densities}}
\end{tabular}
\end{center}
\caption{\emph{Ratios of $\Stirling{n}{k}$ to their asymptotic estimates for $n=20, 22,\ldots,50$: FD with $s=1,3,5$ (left) and PC with $s_1=1,2,3$ (right).}}
\label{contrast_FD_PC}
\end{figure}

\begin{figure}[!ht]
  \centering
  \begin{subfigure}[t]{0.48\textwidth}
    \centering
    \includegraphics[width=7cm]{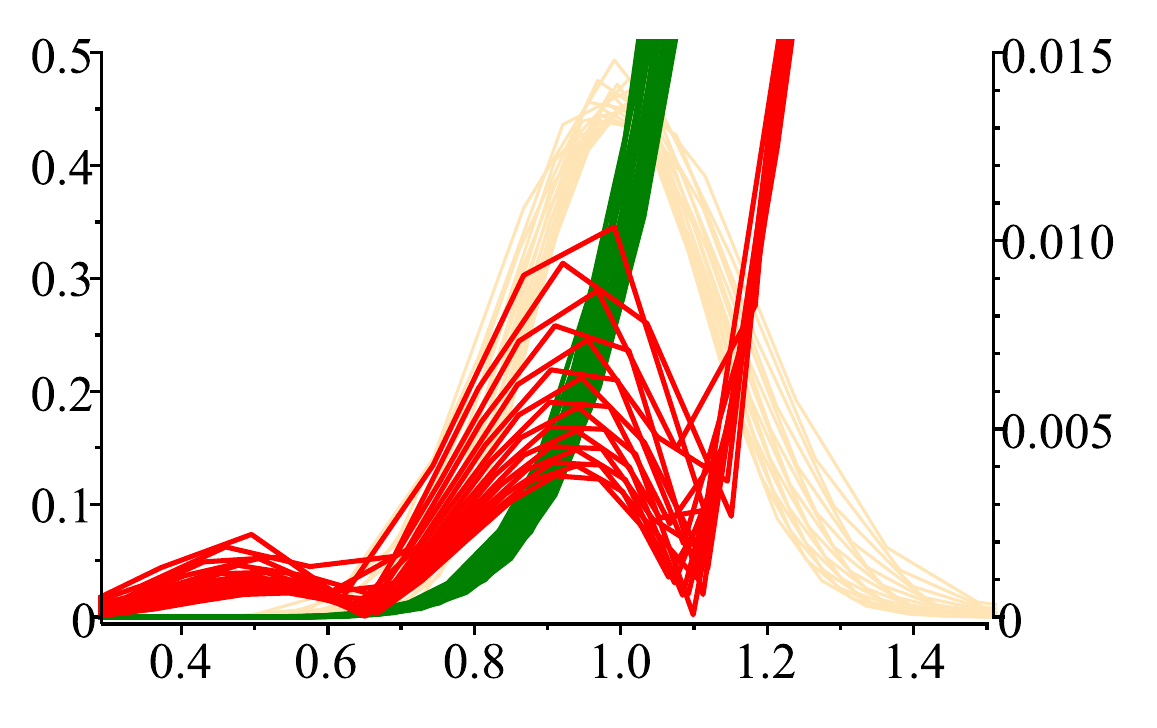}\\[-1ex]
    {\footnotesize
	\textcolor{green!50!black}{\raisebox{0.45ex}{\rule{1.6em}{1.0pt}}\,FD with $s=3$}
	\,vs\,
    \textcolor{red}{\raisebox{0.45ex}{\rule{1.6em}{1.0pt}}\,PC with $s_1=2$}}
  \end{subfigure}
  \hfill
  \begin{subfigure}[t]{0.48\textwidth}
    \centering
    \includegraphics[width=7cm]{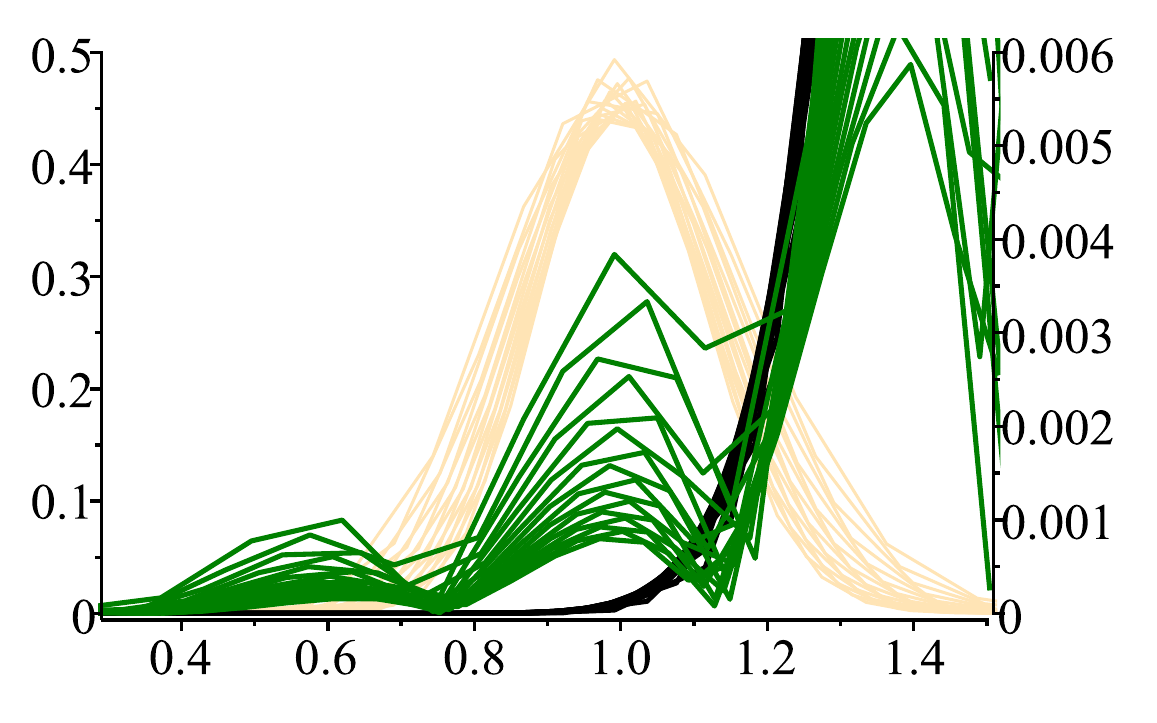}\\[-1.5ex]
    {\footnotesize
	\textcolor{black}{\raisebox{0.45ex}{\rule{1.6em}{1.0pt}}\,FD with $s=5$}
	\,vs\,
    \textcolor{green!50!black}{\raisebox{0.45ex}{\rule{1.6em}{1.0pt}}\,PC with $s_1=3$}}
  \end{subfigure}  
  \caption{\emph{Comparison of FD in \eqref{E:FD} and PC in \eqref{E:PC-an} for $n=20, 22,\ldots,50$.}}\label{F:comparison_combined}
\end{figure}

\section{Expansions for $k\le \frac{mn}{\log n}$ based on Laplace's saddle-point approximation}

For a better numerical comparison later, before discussing other elementary asymptotic expansions for $S(n,k)$ in \eqref{E:Lap3}, we begin with Laplace's saddle-point expansion for $S(n,k)$, which, although not proved by elementary approaches, provides a more uniform approximation.

The history of asymptotic approximations to Stirling numbers begins with Laplace's pioneering and far-reaching expansions that he derived in the 1780s \cite{Laplace1785, Laplace1786}, and later modified in his monumental book \enquote{Th\'eorie analytique des probabilit\'es}, published in three versions from 1812 to 1820 \cite{Laplace1820}. See our companion paper for a more detailed account. 

Laplace's pioneering and far-reaching saddle-point expansion (see \cite{Laplace1785, Laplace1820, David1962}), established by a formal approach using his saddle-point method\footnote{Note that Laplace's original approach in his 1785 paper \cite{Laplace1785} is based on the manipulation of real integrals and the foundation of complex analysis had to wait for about four decades to be established by Cauchy; see \cite{Bottazzini2013}. The formal nature of Laplace's analysis was later made rigorous by Cauchy \cite{Cauchy1844} whose approach is mostly complex analysis, although eventually he converts all complex integrals to real ones.}, is of the form:
\begin{equation}\label{E:Lap-SPE}
	\Stirling{n}{k} \frac{k!}{n!}
	= \frac{R^{-n}(e^R-1)^k}{\sqrt{2 \pi V(R)}} 
	\Lpa{1+\frac{d_1(R)}{V(R)}+\frac{d_2(R)}{V(R)^2}+\cdots},
\end{equation}
for some explicitly computable $d_1(R)$ and $d_2(R)$ (not needed in this paper), where $R>0$ solves the equation
\begin{align}\label{E:bj}
	\frac{1-e^{-R}}{R} = \frac{1}{\rho_*}, 
	\eqtext{and}
	V(R) := (n+1)(R+1-\rho_*),\eqtext{where}
	\rho_* := \frac{n+1}k.
\end{align}

Essentially, the same expansion was later derived by Bleick and Wang \cite{Bleick1974} by applying the saddle-point method to a complex integral along a vertical path. Such an integration path was ascribed by Wegner in \cite{Wegner2012} to Curt Meyer: \enquote{\emph{In 1966 C. Meyer had the idea to use an integration along a vertical line to represent the Stirling numbers of the second kind. The integral
representation (2.1) of this paper is based on this fruitful idea and is essential for the main results in the present paper and the earlier publication.}}
 
A similar approach (using instead integrals on a circle) was previously adopted by several authors; see, e.g., Arfwedson \cite{Arfwedson1951}, Moser and Wyman \cite{Moser1958}, Good \cite{Good1961}, and Ivchenko \cite{Ivchenko1965}. Briefly, the main difference in the resulting expansion is that the saddle-point equation becomes $\frac{1-e^{-r}}{r} = \frac{k}{n}$.

From Laplace's classical expansion \eqref{E:Lap-SPE}, we can then derive more explicit asymptotic approximations for $\Stirling{n}{k}$ when $k$ is of order $\frac{mn}{\log n}$ for any $m>0$. Strangely, we do not find the resulting expansions \eqref{E:mn-logn} and \eqref{E:R-to-rho} in the literature.

First, it is easy to prove the existence and uniqueness of the solution $R$ of the equation \eqref{E:bj} when $1\le k\le n$ ($R =0$ when $k=n+1$); see \cite{Moser1958, Bleick1974}. Moreover, $R\to\infty$ when $k=o(n)$ and $R\to0$ when $k\to n^{-}$. Indeed, Laplace himself already derived a useful expansion of $R$ in terms of the ratio of $n+1$ and $k$.

\begin{lemma} \cite[p.~197]{Laplace1820} The solution $R>0$ 
	to the equation \eqref{E:bj} satisfies 
\begin{align}\label{eq:rho}
	R = \rho_*-\sum_{j\ge1}\frac{j^{j-1}}{j!}\,
	\rho_*^j e^{-j\rho_*},
\end{align}
where $\rho_*$ is defined in \eqref{E:bj}. The series is absolutely convergent when $1\le k\le n+1$. 
\end{lemma}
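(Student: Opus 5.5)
The plan is to reduce the equation to the defining equation of the tree function and then read the series off its Lagrange expansion. Put $R=\rho_*-u$ and rewrite \eqref{E:bj} as
\[
	1-e^{-R}=\frac{R}{\rho_*}.
\]
Substituting gives $1-e^{-\rho_*}e^{u}=1-\frac{u}{\rho_*}$, that is,
\[
	u=\rho_*e^{-\rho_*}e^{u}.
\]
Setting $z:=\rho_*e^{-\rho_*}$, this is exactly the tree-function equation $u=ze^{u}$, equivalently $-u=W(-z)$ with $W$ the Lambert function.

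Lagrange inversion, or the classical Cayley expansion, then gives
\[
	T(z)=\sum_{j\ge1}\frac{j^{j-1}}{j!}z^j,
\]
which is the unique solution of $u=ze^u$ analytic at $0$ with $T(0)=0$. This yields the formula \eqref{eq:rho} with $u=T(\rho_*e^{-\rho_*})$. By Stirling's formula, $j^{j-1}/j!\sim e^j/(\sqrt{2\pi}\,j^{3/2})$, so the radius of convergence is $e^{-1}$. Moreover, the series converges absolutely on the closed disk $|z|\le e^{-1}$, since $\sum j^{-3/2}<\infty$. Because $\rho_*e^{-\rho_*}\le e^{-1}$ for every $\rho_*>0$, with equality only at $\rho_*=1$ (that is, $k=n+1$), absolute convergence holds throughout the stated range $1\le k\le n+1$.

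The main obstacle is \emph{branch selection}: $u=ze^u$ has two real solutions when $0<z<e^{-1}$, and we must show the series picks the one giving the positive root $R$ from the existence and uniqueness statement. On $[0,e^{-1}]$ the series $T(z)$ is the principal branch, which is increasing with $T(e^{-1})=1$, so $T(z)\le1$. The other real solution exceeds $1$. Since $\rho_*=\frac{n+1}{k}\ge1$, the value $u=\rho_*$ also solves $u=\rho_*e^{-\rho_*}e^{u}$, but it corresponds to the spurious root $R=0$.

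\begin{itemize}
\item For $\rho_*>1$ the two solutions are $\rho_*>1$ and $T(\rho_*e^{-\rho_*})<1$. Hence the series gives the other one, $u<1<\rho_*$, so $R=\rho_*-u>0$, and this is the unique positive root.
\item For $\rho_*=1$ the two solutions coincide, giving $R=0$, consistent with the stated convention.
\end{itemize}

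Finally, it suffices to note that $T(z)<1$ for $z<e^{-1}$, which follows from the monotonicity of $ue^{-u}$ on $[0,1]$.
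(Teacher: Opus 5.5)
Your proof is correct and is essentially the paper's argument: the paper applies Lagrange inversion to $e^{-R}=e^{-\rho_*+\rho_*e^{-R}}$, and your $u=\rho_*-R=\rho_*e^{-R}$ is just $\rho_*$ times that quantity, so both proofs reduce to the tree-function equation $u=\rho_*e^{-\rho_*}e^{u}$. Your branch-selection step, which shows the series picks the root $u<1$ rather than the spurious $u=\rho_*$, and your $j^{-3/2}$ bound for absolute convergence on $|z|\le e^{-1}$ supply details that the paper leaves implicit.
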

\begin{proof}
Since $R=0$ when $k=n+1$, we assume that $1\le k\le n$. The saddle-point equation \eqref{E:bj} can be written as 
\begin{align}\label{E:rho-lag}
	R = \rho_*(1-e^{-R})\eqtext{or}
  e^{-R} = e^{-\rho_*+\rho_* e^{-R}}.
\end{align}
By Lagrange inversion formula \cite[\S 3.8]{Comtet1974}, we get the 
expansion 
\[
	e^{-R} = \sum_{j\ge1}\frac{j^{j-1}}{j!}\,
	\rho_*^{j-1} e^{-j\rho_*}.
\]
Then equation \eqref{eq:rho} follows from the first equation in \eqref{E:rho-lag}.
\end{proof}
\begin{cor} For $1\le k\le n$, the inequalities
\begin{align}
	\rho_*-1\le R\le \rho_*
\end{align}
hold.
\end{cor}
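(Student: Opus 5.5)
Both inequalities follow directly from the saddle-point equation \eqref{E:bj} written as $R=\rho_*(1-e^{-R})$ in \eqref{E:rho-lag}; the Lagrange series \eqref{eq:rho} is not needed, although it gives an equivalent route for the upper bound.

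\emph{Upper bound.} For $R>0$ we have $0<1-e^{-R}<1$. Hence $R=\rho_*(1-e^{-R})\le \rho_*$. Alternatively, in \eqref{eq:rho} every term of the subtracted series $\sum_{j\ge1}\frac{j^{j-1}}{j!}\rho_*^je^{-j\rho_*}$ is nonnegative, so $R\le\rho_*$.

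\emph{Lower bound.} The equation gives the exact identity
\[
	\rho_*-R=\rho_*e^{-R}.
\]
So it suffices to show $\rho_*e^{-R}\le 1$, i.e.\ $e^{R}\ge \rho_*$. Substitute $\rho_*=R/(1-e^{-R})$ from \eqref{E:bj}. The required inequality becomes
\[
	e^R(1-e^{-R})\ge R, \quad\text{that is,}\quad e^R-1\ge R,
\]
which holds for every real $R$ by $e^R\ge 1+R$. Therefore $\rho_*-R\le 1$.

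No step here is a real obstacle. The only points to check are that $R>0$ exists and is unique for $1\le k\le n$, as noted above the statement, and that $\rho_*>1$ in this range. The latter holds because $\rho_*=(n+1)/k\ge (n+1)/n$, so the positive root is the relevant one. Equality in the lower bound is approached as $R\to0$, i.e.\ as $k\to n^{-}$.
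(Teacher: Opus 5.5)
Your proof is correct, but it takes a different route from the one the paper intends. The paper states the corollary without proof, as a consequence of Laplace's series \eqref{eq:rho}. There, the subtracted sum is $\rho_*e^{-R}=T(\rho_*e^{-\rho_*})$, where $T(z)=\sum_{j\ge1}\frac{j^{j-1}}{j!}z^j$ is the tree function. Since $0<\rho_*e^{-\rho_*}\le e^{-1}$, $T$ has nonnegative coefficients, and $T(e^{-1})=1$, one reads off $0\le\rho_*-R\le1$. You bypass the series and use only the fixed-point form $R=\rho_*(1-e^{-R})$ of \eqref{E:rho-lag}. The upper bound is then immediate, and the lower bound reduces, via $\rho_*-R=\rho_*e^{-R}$ and $\rho_*=R/(1-e^{-R})$, to $e^R-1\ge R$. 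Your argument is fully elementary and avoids two points the series route glosses over. The first is evaluating $T$ at the boundary point $z=e^{-1}$ of its disc of convergence. The second, more substantive, is knowing that the Lagrange series represents the \emph{positive} root. Indeed $u=\rho_*e^{-R}$ solves $u=ze^{u}$ with $z=\rho_*e^{-\rho_*}$. For $\rho_*>1$ this equation has two real roots, $T(z)\in(0,1)$ and $u=\rho_*$, the latter coming from the trivial root $R=0$. Your inequality $\rho_*e^{-R}\le 1$ is exactly what places the positive root $R$ on the $T$-branch. In return, the series route gives the exact formula $\rho_*-R=T(\rho_*e^{-\rho_*})$. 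Hence it yields the sharper estimate $\rho_*-R=\rho_*e^{-\rho_*}(1+O(\rho_*e^{-\rho_*}))$ for large $\rho_*$, which is the kind of information exploited later in deriving \eqref{E:R-to-rho}.
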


\begin{prop} If $1\le k=o(n)$, then 
\begin{align}\label{E:stir2-k-son}
	S(n,k)
	= \lpa{1-e^{-R}}^{k-n}e^{-ne^{-R}}
	\lpa{1+O\lpa{Re^{-R} + n^{-1}}},
\end{align}
where $R>0$ solves the equation \eqref{E:bj}.
\end{prop}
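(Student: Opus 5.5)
Taking Laplace's saddle-point expansion \eqref{E:Lap-SPE} at leading order as the starting point (the paper presents it as established, via Cauchy / Bleick--Wang), we have
\[
	S(n,k)=\frac{k!}{k^n}\Stirling{n}{k}
	=\frac{k!\,n!}{k^{n}k!}\cdot\frac{R^{-n}(e^R-1)^k}{\sqrt{2\pi V(R)}}\lpa{1+O(V(R)^{-1})}.
\]
The plan is purely to rewrite the right-hand side. The steps are: apply Stirling's formula to $n!$; use the saddle-point relation $R=\rho_*(1-e^{-R})$ to eliminate $R$ in favour of $1-e^{-R}$; and check that the Gaussian prefactor $\sqrt{2\pi V(R)}$ cancels against the $\sqrt{2\pi n}$ from Stirling up to the stated error.

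First, since $R=\frac{n+1}{k}(1-e^{-R})$,
\[
	R^{-n}=\Lpa{\frac{k}{n+1}}^n(1-e^{-R})^{-n},\qquad
	(e^R-1)^k=e^{kR}(1-e^{-R})^k .
\]
This gives the factor $(1-e^{-R})^{k-n}$ directly. The remaining terms are
\[
	\frac{n!\,k^n}{k^n(n+1)^n}\,e^{kR}
	=\frac{n!}{(n+1)^n}\,e^{kR}.
\]
Next we need $kR$. From the equation, $kR=(n+1)(1-e^{-R})=n+1-(n+1)e^{-R}$. Stirling's formula gives
\[
	\frac{n!}{(n+1)^n}=\sqrt{2\pi n}\,e^{-n}\Lpa{\frac{n}{n+1}}^n\lpa{1+O(n^{-1})}=\sqrt{2\pi n}\,e^{-n-1}\lpa{1+O(n^{-1})}.
\]
Multiplying by $e^{kR}=e^{n+1}e^{-(n+1)e^{-R}}$ yields $\sqrt{2\pi n}\,e^{-ne^{-R}}e^{-e^{-R}}$. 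The factor $e^{-e^{-R}}=1+O(e^{-R})$ is absorbed into the error term, because $R\to\infty$ when $k=o(n)$.

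Then the variance term. We have $V(R)=(n+1)(R+1-\rho_*)$, and $\rho_*-R=\rho_*e^{-R}=Re^{-R}/(1-e^{-R})$. Hence $V(R)=(n+1)\lpa{1-O(Re^{-R})}$, so
\[
	\sqrt{2\pi n}/\sqrt{2\pi V(R)}=1+O(Re^{-R}+n^{-1}).
\]
Finally, the correction $d_1(R)/V(R)$ in \eqref{E:Lap-SPE} has to be shown to be $O(n^{-1})$ uniformly in the range. Here I would need $d_1(R)$ bounded for large $R$. I expect this to hold because it is a combination of normalized third and fourth cumulants of a distribution that becomes nearly degenerate with variance $\sim n$.

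The main obstacle is uniformity. The error term in Laplace's expansion has to be uniform for all $1\le k=o(n)$, including $k$ bounded, where $R\approx\rho_*$ is huge. I would get this from the vertical-line integral representation of Bleick and Wang, whose remainder is controlled by $V(R)^{-1}$ with constants uniform in $R\ge R_0$. Given that, and $V(R)\asymp n$ in this range (since $R+1-\rho_*\ge 1-Re^{-R}/(1-e^{-R})\to1$), the three errors combine to $O(Re^{-R}+n^{-1})$.
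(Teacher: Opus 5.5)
Your proposal is correct and follows the paper's proof essentially step for step. Both arguments take Laplace's expansion \eqref{E:Lap-SPE} as given, use $R=\rho_*(1-e^{-R})$ together with Stirling's formula to extract $(1-e^{-R})^{k-n}e^{kR-n-1}$ with $e^{kR-n-1}=e^{-(n+1)e^{-R}}$, and show $\sqrt{(n+1)/V(R)}=\sqrt{(1-e^{-R})/(1-(1+R)e^{-R})}=1+O(Re^{-R})$. The paper passes silently over the extra factor $e^{-e^{-R}}$, the uniformity of Laplace's remainder, and the boundedness of $d_1(R)$; you absorb the first explicitly and flag the other two as assumptions, which adds care rather than indicating a gap.
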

\begin{proof}
First, by \eqref{E:Lap-SPE} and Stirling's formula,
\[
	\frac{n!R^{-n}(e^R-1)^k}
	{k^n\sqrt{2\pi V(R)}}
	= \sqrt{\frac{n+1}{V(R)}}\,
	\lpa{1-e^{-R}}^{k-n}e^{kR-n-1}
	\lpa{1+O\lpa{n^{-1}}}.
\]
Then, by \eqref{E:Lap-SPE}, \eqref{E:rho-lag} and the saddle-point equation \eqref{E:bj}, we have 
\[
	V(R) = \frac{kR(1-(R+1)e^{-R})}{(1-e^{-R})^2},
\]
and then
\[
	\sqrt{\frac{n+1}{V(R)}}
	= \sqrt{\frac{n+1}{kR}\cdot \frac{(1-e^{-R})^2}
	{1-(1+R)e^{-R}}}
	= \sqrt{\frac{1-e^{-R}}
	{1-(1+R)e^{-R}}}
	= 1+O\lpa{Re^{-R}}.
\]
Finally, 
\[
	e^{kR-n-1} = \exp\Lpa{-\frac{kRe^{-R}}{1-e^{-R}}}
	= e^{-(n+1)e^{-R}}.
\]
Thus \eqref{E:stir2-k-son} follows. 
\end{proof}

\begin{cor} If $k \le \frac{mn}{\log n+\eta_n}$ for any sequence 
	$\eta_n\to\infty$, $m\ge2$, then 
\begin{align}\label{E:mn-logn}
	S(n,k)
	= \lpa{1-e^{-R}}^{k} 
	\exp\llpa{n\sum_{2\le l<m}
	\frac{e^{-lR}}{l}}
	\lpa{1+O\lpa{Re^{-R}+ ne^{-mR}+n^{-1}}}.
\end{align}
\end{cor}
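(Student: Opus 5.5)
The corollary follows from \eqref{E:stir2-k-son} by expanding the factor $(1-e^{-R})^{-n}$ via $-\log(1-y)=\sum_{l\ge1}y^l/l$. Writing $y:=e^{-R}$, we have
\[
	\lpa{1-e^{-R}}^{k-n}e^{-ne^{-R}}
	= \lpa{1-e^{-R}}^{k}\exp\Lpa{n\sum_{l\ge1}\frac{y^l}{l}-ny}
	= \lpa{1-e^{-R}}^{k}\exp\Lpa{n\sum_{2\le l<m}\frac{y^l}{l}
	+ n\sum_{l\ge m}\frac{y^l}{l}}.
\]
The $l=1$ term cancels exactly against $e^{-ne^{-R}}$, which produces the stated main factor. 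The proof then reduces to two points: the hypothesis $k\le\frac{mn}{\log n+\eta_n}$ must place us in the range $k=o(n)$ where \eqref{E:stir2-k-son} applies, and the tail $n\sum_{l\ge m}y^l/l$ must be small.

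First, since $m$ is fixed and $\eta_n\to\infty$, we have $k=O(n/\log n)=o(n)$, so \eqref{E:stir2-k-son} holds with error $O(Re^{-R}+n^{-1})$. Second, I bound the tail in terms of $\rho$. By the corollary $\rho_*-1\le R\le\rho_*$ and $\rho_*=(n+1)/k\ge\rho$, we get $R\ge\rho-1$, so $e^{-R}\le e\,e^{-n/k}$. The hypothesis gives $n/k\ge(\log n+\eta_n)/m$, hence
\[
	e^{-mR}\le e^m n^{-1}e^{-\eta_n},
\]
and therefore $ne^{-mR}=O(e^{-\eta_n})=o(1)$. Because $y\le 1/2$ for large $n$, the tail satisfies $n\sum_{l\ge m}y^l/l\le 2ny^m/m=O(ne^{-mR})$. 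This quantity is $o(1)$, so $\exp(\cdot)=1+O(ne^{-mR})$.

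Combining these, the product $(1+O(Re^{-R}+n^{-1}))(1+O(ne^{-mR}))$ equals $1+O(Re^{-R}+ne^{-mR}+n^{-1})$, using that every error is $o(1)$. Note that $Re^{-R}\to0$ because $R\ge\rho-1\to\infty$. The only step needing care is translating the range condition into $ne^{-mR}=o(1)$ through the relation between $R$ and $\rho$. This is essentially immediate from the two-sided bound on $R$, so I expect no serious obstacle. The main subtlety is checking that $m$ is fixed, so that constants like $e^m$ are harmless and the hypotheses of \eqref{E:stir2-k-son} are met uniformly.
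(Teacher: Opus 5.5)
Your proof is correct and follows the paper's argument: you expand $(1-e^{-R})^{-n}$ in \eqref{E:stir2-k-son}, cancel the $l=1$ term against $e^{-ne^{-R}}$, and bound the tail $l\ge m$ by $O(ne^{-mR})$, which is $o(1)$ under the range condition. The paper's one-line proof states only this last fact. You supply the details it leaves implicit, notably $R\ge\rho_*-1\ge\rho-1$, which gives $ne^{-mR}\le e^{m}e^{-\eta_n}$.
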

\begin{proof}
If $\frac nk-\frac1m\log n\to\infty$, then $ne^{-mR}=o(1)$.
\end{proof}
The expression \eqref{E:mn-logn} with $m=2$ first appeared in Laplace's book \cite[p.~199]{Laplace1820}.

We now express \eqref{E:mn-logn} in terms of $\rho_*$, which results in a slightly smaller range in $k$.
\begin{cor} If $k \le \frac{mn}{\log n+(m-2)\log\log n+\eta_n}$ 
for any sequence $\eta_n\to\infty$, $m\ge2$, or 
\[
	\frac{n}{k}-\frac1m\lpa{\log n+(m-2)\log\log n}\to\infty,
\]
for $m=2,3,\dots$, then 
\begin{equation} \label{E:R-to-rho}
	\begin{split}
		S(n,k)
		&= \lpa{1-e^{-\rho_*}}^{k}
		\exp\Lpa{-\frac{(n+1) e^{-2\rho_*}}{2}
		\sum_{0\le h\le m-3}J_h(\rho_*)e^{-h\rho_*}}\\
		&\qquad
		\times\lpa{1+O\lpa{\rho_* e^{-\rho_*}+k\rho_*^{m-1}e^{-m\rho_*}+n^{-1}}}.
	\end{split}
\end{equation}
where $J_h$ is a polynomial of degree $h$ defined by 
\[
	J_h(z):=\frac2{(h+2)^2z}
	[x^{h+1}]\frac{e^{(h+2)z x}-1}{(1-x)^2}
    =2\sum_{0\le l\le h}
	\frac{(h+2)^{l-1}(h+1-l)}{(l+1)!}\,z^l.
\]

\end{cor}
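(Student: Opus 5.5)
We start from \eqref{E:mn-logn} and replace $R$ by its expansion in $\rho_*$ from \eqref{eq:rho}. Write $\delta:=\rho_*-R$. Then $\delta=\sum_{j\ge1}\frac{j^{j-1}}{j!}\rho_*^je^{-j\rho_*}$, and under the stated range $\delta=O(\rho_*e^{-\rho_*})$. There are two quantities to expand: the factor $(1-e^{-R})^k$, and the exponent $n\sum_{2\le l<m}e^{-lR}/l$. In both we substitute $e^{-R}=e^{-\rho_*}e^{\delta}$ and expand in powers of $e^{-\rho_*}$, keeping only terms of total exponential order below $e^{-m\rho_*}$ once multiplied by $n\asymp k\rho_*$.

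\emph{The leading factor.} We write $k\log(1-e^{-R})=k\log(1-e^{-\rho_*})+k\log\frac{1-e^{-R}}{1-e^{-\rho_*}}$. The second term equals $-k\sum_{l\ge1}\frac{1}{l}\lpa{e^{-lR}-e^{-l\rho_*}}$ up to the standard correction. By \eqref{E:rho-lag}, $R=\rho_*(1-e^{-R})$, so $kR=(n+1)(1-e^{-R})$. This lets every term with a factor $k$ be rewritten as $(n+1)/\rho_*$ times a power series in $x:=e^{-\rho_*}$ with coefficients polynomial in $\rho_*$. It is cleaner to work directly with $\log(1-e^{-R})$ via Lagrange inversion.

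\emph{Lagrange inversion for the combined exponent.} By \eqref{E:rho-lag}, $w:=e^{-R}$ satisfies $w=x\,e^{\rho_* w}$. For any analytic $H$, Lagrange--Bürmann gives
\[
[x^{N}]H(w)=\tfrac1N[w^{N-1}]H'(w)e^{N\rho_* w}.
\]
We apply this to $H(w)=k\log(1-w)+n\sum_{2\le l<m}w^l/l$, after first using $n+1=k\rho_*(1-w)^{-1}\cdot R/\rho_*$ to express everything in the single variable $w$. The coefficient of $x^{h+2}$ should then take the form $-\frac{n+1}{2}J_h(\rho_*)$, with
\[
J_h(z)=\frac{2}{(h+2)^2z}[x^{h+1}]\frac{e^{(h+2)zx}-1}{(1-x)^2}.
\]
The factor $(1-x)^{-2}$ arises from differentiating $\log(1-w)$ together with the Jacobian, and the term $-1$ removes the $l=0$ contribution already absorbed in $(1-e^{-\rho_*})^k$. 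The linear term $[x^1]$ should cancel exactly, because $kR=(n+1)(1-e^{-R})$ absorbs it; this cancellation is why the sum in \eqref{E:R-to-rho} starts at $e^{-2\rho_*}$. The explicit polynomial form of $J_h$ then follows by expanding $e^{(h+2)zx}$ and $(1-x)^{-2}=\sum(r+1)x^r$ and collecting the coefficient of $x^{h+1}$.

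\emph{Truncation and error terms.} We truncate at $h\le m-3$, that is, at powers $x^{\le m-1}$. The tail of the series is bounded by $k\rho_*^{m-1}e^{-m\rho_*}$, since $[x^{N}]$ contributes $O(k\rho_*^{N-1})$. This tail is $o(1)$ exactly when $\frac nk-\frac1m(\log n+(m-2)\log\log n)\to\infty$, which explains the slightly smaller range compared with \eqref{E:mn-logn}. The errors $Re^{-R}$ and $ne^{-mR}$ inherited from \eqref{E:mn-logn} become $\rho_*e^{-\rho_*}$ and $k\rho_*^{m-1}e^{-m\rho_*}$, because $e^{-R}\asymp e^{-\rho_*}$ (as $\delta=o(1)$).

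\textbf{Main obstacle.} The hardest step is the bookkeeping in the Lagrange inversion: verifying the cancellation of the $x^1$ term and obtaining exactly the generating function $\frac{e^{(h+2)zx}-1}{(1-x)^2}$ with the normalization $\frac{2}{(h+2)^2z}$. My first attempt would be to check the cases $h=0,1$ by direct expansion (where $J_0=1$) and then match the general pattern.
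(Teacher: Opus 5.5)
Your route is the paper's. You set $w=e^{-R}$, $x=e^{-\rho_*}$, $w=xe^{\rho_*w}$, expand by Lagrange inversion, merge $k\log\frac{1-e^{-R}}{1-e^{-\rho_*}}$ with $n\sum_{2\le l<m}e^{-lR}/l$ via $n+1=k\rho_*$, and truncate at $h\le m-3$. The paper inverts the two pieces separately while you invert their sum, which makes no difference.

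\textbf{The gap.} What is missing is the step that actually constitutes the proof. You only announce that the coefficient of $x^{h+2}$ ``should'' equal $-\frac{n+1}{2}J_h(\rho_*)$, and checking $h=0,1$ and matching a pattern would not prove it for general $h$.

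Some of your heuristics for this step are also not the real mechanism:
\begin{itemize}
\item $[x^1]$ vanishes trivially, because $w=x+O(x^2)$ and the power sum starts at $l=2$; no cancellation through $kR=(n+1)(1-e^{-R})$ is involved.
\item The identity you use to ``express everything in $w$'' is just $n+1=k\rho_*$, since $R/\rho_*=1-w$.
\item $(1-y)^{-2}$ does not come from a Jacobian. It merely generates the weights $N-i$ that appear when the two pieces are merged.
\end{itemize}

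\textbf{Closing it.} With your own Lagrange--B\"urmann formula the computation is immediate:
\[
[x^N]\log\frac{1-w}{1-x}=-\frac1N\sum_{1\le i<N}\frac{(N\rho_*)^i}{i!},
\qquad
[x^N]\frac{w^l}{l}=\frac1N\,\frac{(N\rho_*)^{N-l}}{(N-l)!}\quad(2\le l\le N).
\]
Replace $n$ by $k\rho_*=n+1$; the difference $\sum_{2\le l<m}w^l/l=O(e^{-2\rho_*})$ is absorbed into $O(\rho_*e^{-\rho_*})$. Then pair the term $l$ of the second sum with $i=N+1-l$ of the first, so that $\frac{1}{(i-1)!}-\frac{N}{i!}=-\frac{N-i}{i!}$. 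This gives, for $2\le N<m$,
\[
[x^N]\Bigl(k\log\frac{1-w}{1-x}+k\rho_*\sum_{2\le l<m}\frac{w^l}{l}\Bigr)
=-\frac kN\sum_{1\le i<N}\frac{N^{i-1}(N-i)}{i!}\,\rho_*^i
=-\frac{k}{N^2}\,[y^{N-1}]\frac{e^{N\rho_*y}-1}{(1-y)^2}
=-\frac{n+1}{2}\,J_{N-2}(\rho_*),
\]
using $k=(n+1)/\rho_*$.

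\textbf{The tail.} Your estimate ``$[x^N]=O(k\rho_*^{N-1})$'' is not enough, because you sum over all $N\ge m$; you need a bound uniform in $N$.
\begin{itemize}
\item For $N\ge m$, the same two formulas show that the merged coefficient is a sum of terms each at most $\frac kN\frac{(N\rho_*)^i}{i!}$ in absolute value, for $1\le i<N$.
\item These terms are nondecreasing in $i$ when $\rho_*\ge1$. Hence the coefficient is bounded by $k\frac{(N\rho_*)^{N-1}}{(N-1)!}\le \frac{k}{\rho_*}(e\rho_*)^N$.
\item The tail is therefore a geometric series with ratio $e\rho_*e^{-\rho_*}\to0$, hence $O\lpa{k\rho_*^{m-1}e^{-m\rho_*}}$.
\end{itemize}
This is exactly the error term claimed, and it is $o(1)$ precisely in the stated range.
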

\begin{remark} 
In particular, $J_0(z)=1$, $J_1(z)=\frac{3z+4}3$, and $J_2(z)= \frac{8z^2+12z+9}6$. This implies that:
\begin{align*}
	\frac{S(n,k)}{(1-e^{-\rho_*})^k} \sim 
	\begin{cases}
		1, 
		& \text{if }k\le \frac{2n}{\log n+\eta_n},\\
		\exp\lpa{-\frac{n+1}2\,e^{-2\rho_*}},
		& \text{if }k\le \frac{3n}{\log n+\log\log n+\eta_n},\\
		\exp\lpa{-\frac{n+1}2\,e^{-2\rho_*}
		-\frac{n+1}6(3\rho_*+4)e^{-3\rho_*}},
		& \text{if }k\le \frac{4n}{\log n+2\log\log n+\eta_n},
	\end{cases}
\end{align*}
for any $\eta_n\to\infty$.
\end{remark}

\begin{proof}
By Lagrange inversion formula and the relation $R=\rho_*(1-e^{-R})$, 
we have
\[
	\log\frac{1-e^{-R}}{1-e^{-\rho_*}}
	= \sum_{h\ge2}d_h e^{-h\rho_*},
\]
where
\begin{align*}
	d_h &= \frac1h[z^{h-1}]
	e^{h\rho_* z}\frac{\mathrm{d}}{\mathrm{d} z}
	\log\frac{1-z}{1-ze^{-\rho_* z}}
	= -\frac1h\sum_{1\le l<h}\frac{(h\rho_*)^l}{l!}.
\end{align*}
Similarly, 
\[
	\sum_{2\le l<m}\frac{e^{-lR}}{l}
	= \sum_{h\ge2}e_{m,h}e^{-h\rho_*},
 \eqtext{where}
	e_{m,h}
	= \frac1h\sum_{2\le l\le \min\{m-1,h\}}
	\frac{(h\rho_*)^{h-l}}{(h-l)!}.
\]
It follows that
\begin{align*}
	&k\log\frac{1-e^{-R}}{1-e^{-\rho_*}}+n\sum_{2\le l<m}
	\frac{e^{-lR}}{l}\\
	&\qquad= -k\sum_{2\le h<m}\frac{e^{-h\rho_*}}{h}
	\sum_{1\le l<h}\frac{h^{l-1}(h-l)}{l!}\,\rho_*^l\\
	&\qquad\qquad -k\sum_{h\ge m}\frac{e^{-h\rho_*}}{h}
	\llpa{\sum_{h-m+2\le l\le h-1}\frac{h^{l-1}\rho_*^l}{l!}(h-l)
	+\sum_{1\le l\le h-m+1}
	\frac{(h\rho_*)^{l}}{l!}}.
\end{align*}
The second double-sum is of order $O(k\rho_*^{m-1}e^{-m\rho_*})$ when 
$\rho_*\to\infty$. This proves \eqref{E:R-to-rho}.
\end{proof}

\section{A modern synthesis of classical asymptotic expansions}

This section compiles the main known asymptotic approximations for $\Stirling{n}{k}$ that involve only simple elementary functions and hold uniformly for $k$ in the range 
\[
    \frac{(1-\ve)n}{\log n}\le k\le \frac{(2-\ve)n}{\log n}, 
\]
which we refer to as \emph{the central range} because it encompasses the LLT regime for the Stirling partition numbers. We show that all known approximations in the central range can be derived in a unified, purely formal manner. Building on these existing formulas, we introduce several new or modified versions that improve accuracy; their numerical performance is evaluated in Section~\ref{S:comparison}. The derivations of these new expansions follow the same approach as the Poisson--Charlier expansion \eqref{E:PC-an} and are deferred to Appendices~\ref{S:App-B}, \ref{S:App-C}, and~\ref{S:App-D}. For the numerical comparison in Section~\ref{S:comparison}, at most the first two terms of each expansion are used, whose justifications follow directly from either \eqref{E:FD2} or \eqref{E:PC-an}.

\subsection{Four types of expansions}

Beyond the less explicit saddle-point approximations, the more explicit asymptotic expressions for $S(n,k)$ in the central range typically fall into one of the two dominant forms: 
\begin{align}\label{E:bino-and-exp}
	\text{Binomial:}\;\;\Lpa{1-\frac{\lambda}k}^k,
	\eqtext{or} \text{Exponential:}\;\; e^{-\lambda},
\end{align}
depending on whether the binomial coefficient $\binom{k}{j}$ in \eqref{E:Snk-sum} is retained in its original form or approximated by $\frac{k^j}{j!}$.

When these are further combined with different approximations to the arithmetic factor $1-jt$, specifically, by a geometric progression of the form $e^{-jt}$ (exponential) or $(1-t)^j$ (binomial), we obtain four distinct asymptotic patterns. These are summarized in the tree diagram, with the parameters $\lambda := ke^{-\frac nk}$ and $\lambda_b := k(1-\frac 1k)^n$.
\begin{equation}\label{E:Lap3}
\begin{minipage}{0.9\linewidth}
\centering
\begin{tikzpicture}[grow=down, scale=0.7, transform shape,
level 1/.style={sibling distance=250pt, level distance=50pt, 
        nodes={rounded corners=5pt, thick, align=center, 
				fill=yellow!10, text=black, draw=blue}},
level 2/.style={sibling distance=120pt, level distance=50pt,
        nodes={rounded corners=5pt, align=center}},
level 3/.style={sibling distance=80pt, level distance=50pt,
        nodes={rounded corners=5pt, align=center}},
level 4/.style={sibling distance=80pt, level distance=50pt,
        nodes={rounded corners=5pt, align=center}},
edge from parent/.style={draw=black, shorten >=2pt, thick, 
shorten <=2pt}, every node/.style={rectangle},
nodeopt/.style={inner sep=7pt}
]

\node[nodeopt, inner sep=8pt, thick, draw=blue, fill=yellow!10] 	
{$S(n,k) := \sum\limits_{0\le j\le k}\binom{k}{j}(-1)^j(1-\frac jk)^n$}
  child {node[nodeopt, inner sep=7pt] 
	  {Binomial \\ $\binom{k}{j}=\binom{k}{j}$}
    child {node[nodeopt] {$1-\frac jk \sim e^{-\frac jk}$}
      child {node[nodeopt] {Bino-Exp: \\
			  $\lpa{1-\frac{\lambda}{k}}^k$}
			  child {node[nodeopt] {  
				Eqs.~\eqref{E:FD} \& \eqref{E:PC-an}}}}
    }
    child {node[nodeopt] {$1-\frac jk \sim (1-\frac1k)^j$}
      child {node[nodeopt] 
			  {Bino-Bino: \\
				$\lpa{1-\frac{\lambda_b}k}^k$}
        child {node[nodeopt, yshift=-10pt] {de Moivre \cite{deMoivre1712}\\
				Laplace \cite{Laplace1820}\\ 
				David-Barton \cite{David1962}}}
      }
    }
  }
  child {node[nodeopt, inner sep=7pt] 
  {Exponential\\
	$\binom{k}{j}\sim \frac{k^j}{j!}$}
	    child {node[nodeopt] {$1-\frac jk \sim e^{-\frac jk}$} 
	      child {node[nodeopt] {Exp-Exp: \\ $e^{-\lambda}$}
				  child {node[nodeopt] {Laplace \cite{Laplace1785}\\ 
					Cayley \cite{Cayley1888}}}}
	    }
	    child {node[nodeopt] {$1-\frac jk \sim (1-\frac1k)^j$}
	      child {node[nodeopt] 
				  {Exp-Bino: \\ $e^{-\lambda_b}$}
	        child {node[nodeopt] {Cayley \cite{Cayley1888}}}
	      }
	    }
	  };
\end{tikzpicture}	
\end{minipage}
\end{equation}

\medskip

Surprisingly, these elementary approximations have remained little known since their first appearance in the 18th and 19th centuries, and the binomial forms discussed here (Eqs.~\eqref{E:FD} and \eqref{E:PC-an}) appear not to have been treated in earlier work.

We show how these four asymptotic forms in \eqref{E:Lap3} can be understood and constructed in a unified framework, which is different from Laplace's original derivation via \eqref{E:Lap-SPE} or Cayley's formal arguments used in \cite{Cayley1888}. For simplicity of presentation and page length, we only outline the underlying formal ideas. 

The use of $e^{-jt}$ or $(1-t)^j$ to approximate $1-jt$ is not unique, and in theory any functions $\phi(t)$ analytic at the origin with the property that 
\[
	\phi(t)^j = 1-jt + O(t^2),\qquad (t\sim0)
\]
may be adopted. For simplicity and historical reasons, we restrict our discussion below to the two cases $\phi(t) = e^{-t}$ and $\phi(t) = 1-t$.

\subsection{Expansions of binomial type: de Moivre and Laplace}
We show in this subsection how de Moivre and Laplace derive the binomial-type expansions for $S(n,k)$.

\subsubsection{Replacing $1-jt$ with $(1-t)^j$}
The first approximation to alternating sums of the form \eqref{E:Snk-sum} already appears, as early as 1712, in de Moivre’s memoir on the measurement of chance (see \cite{deMoivre1712} and the English translation in \cite{Hald1984}). The crucial idea is to replace the factor $1-jt$ with $(1-t)^j$, yielding the approximation 
\begin{equation}\label{E:bino-exp}
	\begin{split}
		\sum_{0\le j\le k'}\binom{k'}{j}(-1)^j
		\Lpa{1-\frac jk}^n 
		&\approx \sum_{0\le j\le k'}\binom{k'}{j}(-1)^j
		\Lpa{1-\frac 1k}^{jn}\\
		&= \Lpa{1-\Lpa{1-\frac1k}^n}^{k'}
		= \Lpa{1-\frac{\lambda_b}k}^{k'}.
	\end{split}
\end{equation}
Here de Moivre claims that this approximation is useful when $k'\ll k$. In the preface to his later published book \cite[Preface]{deMoivre1756}, de Moivre attributed this \enquote{artifice} of \enquote{changing an arithmetic progression into a geometric one} to Edmond Halley (1656–1742), calling it \enquote{\emph{a very remarkable method of solution}.}

Laplace later applied the same substitution in a more general setting, taking $k'=k$, to estimate the probability of collecting all $k$ distinct tickets after $n$ stages, where at each stage $s\ge1$ tickets are randomly selected. He began with the exact expression
\begin{align}
	\sum_{0\le j\le k}
	\binom{k}{j}(-1)^j
	\llpa{\frac{\binom{k-j}{s}}{\binom{k}{s}}}^n
	&= \sum_{0\le j\le k}
	\binom{k}{j}(-1)^j\prod_{0\le i<s}
	\left(1-\frac{j}{k-i}\right)^n\nonumber \\
	&\approx \sum_{0\le j\le k}
	\binom{k}{j}(-1)^j\Lpa{1-\frac sk}^{jn}\nonumber\\
	&= \Lpa{1-\Lpa{1-\frac sk}^n}^{k}, 
	\label{E:Lap-lotto}
\end{align}
valid when $k$ is large. Here Laplace extended de Moivre's restriction of $k'\ll k$ in \eqref{E:bino-exp} to $k'=k$.

Then Laplace used this approximation with $k=90$ and $s=5$ to estimate the smallest $n$ such that the probability of obtaining a complete collection exceeds one-half. From this, he obtained the estimate $n\approx 85.204$ \cite[p.~203]{Laplace1820}. Today we can compute the exact probabilities numerically:
\begin{align*}
  \sum_{0\le j\le k}
	\binom{k}{j}(-1)^j
	\llpa{\frac{\binom{k-j}{s}}{\binom{k}{s}}}^n
    \approx \begin{cases}
	0.48909\,90163, & \text{if } (n,k,s) = (85,90,5),\\
	0.50930\,98536, & \text{if } (n,k,s) = (86,90,5).
    \end{cases}
\end{align*}
Laplace described the procedure \eqref{E:Lap-lotto} as \enquote{\emph{an extremely simple and very accurate method for obtaining the value of $n$ (un moyen fort simple et tr\`es approché d'obtenir la valeur de $n$)}.}

\subsubsection{Usefulness of the approximation}
The above Bino-Bino type of approximation was later critically examined in David and Barton's book \cite[Ch.~16]{David1962}, where they refer to \eqref{E:Lap-lotto} with $s=1$ as de Moivre's approximation. After a numerical comparison with several other expansions, they offered a sharply contrasting assessment \cite[p.~317]{David1962}:
\begin{quote}
\enquote{\emph{De Moivre's approximation is not at all useful. While the approximations of Laplace become better with increasing $n$, that put forward by de Moivre will actually become worse, and it is suggested that this approximation should never be used.}}	
\end{quote}

This marked discrepancy in judgements is largely due to the distinction between \emph{pointwise accuracy} and \emph{uniform closeness}. De Moivre and Laplace used their approximation to identify the threshold value of $n$ for a fixed $k$ such that the probability exceeds $\frac12$, and the numerical efficiency was satisfactory for their needs. In contrast, David and Barton conducted a numerical comparison of eight different asymptotic expansions (including saddle-point approximations) for $n=20$ and varying $k$ between $1$ and $15$, focusing particularly on the uniformity of the approximations across all $k$ in this range. 

Approximations like the one above, such as \eqref{E:bino-exp}, are valid primarily in the range $k\le \frac{(2-\ve)n}{\log n}$, making them inferior in uniform accuracy to saddle-point methods. However, saddle-point approximations tend to involve cumbersome and unwieldy computation when solving equations like $S(n,k)=\frac12$. In such cases, simpler approximations like \eqref{E:Lap-lotto} remain valuable for their ease of use and effective performance in estimating numerical thresholds.
 
Methodologically, while the justification of the formal expansion \eqref{E:Lap-lotto} (and its extended version to an expansion) requires different approaches and deeper analysis (as we synthesized in Section~\ref{sec:numerical}), its back-of-the-envelope nature makes it the right choice in giving a first-order estimate when no other finer ones are available. On the other hand, to apply the saddle-point method (as Laplace already worked out formally in \cite{Laplace1785}) with $s>1$, one is naturally led to work on multidimensional complex integrals which complicates the analysis. Alternatively, Laplace switched to a different idea in \cite{Laplace1820} by asymptotically reducing the multivariate finite-difference problem to an expansion of univariate finite-difference one.

Finally, it is worthwhile to mention that Bernstein used induction in his book \cite[p.~75]{Bernstein1934} to derive, by a conditional argument, the upper bound
\[
	S(n,k) < \Lpa{1-\Lpa{1-\frac1k}^n}^k.
\]
Bernstein deduced this bound from the probabilistic interpretation of $S(n,k)$ in the context of the occupancy problem. Let $n$ distinct balls be distributed independently and uniformly among $k$ distinct bins. If $E_i$ denotes the event that bin $i$ is non-empty for $i=1, \dots, k$, then the probability that all bins are occupied is given by
\[
  P\left( \bigcap_{1\le i\le k} E_i \right) = \frac{k! }{k^n}\Stirling{n}{k}=S(n,k).
\]
Using the chain rule, we decompose this intersection into conditional probabilities:
\[
  P\left( \bigcap_{1\le i\le k} E_i \right) = P(E_1) \prod_{2\le m\le k} P(E_m \mid E_1 \cap \dots \cap E_{m-1}).
\]
Intuitively, conditioning on the event that the first $m-1$ bins are occupied implies that at least $m-1$ balls have been 'expended' to satisfy those conditions, thereby stochastically reducing the number of balls available to occupy bin $m$. Since $P(E_i) = 1 - (1 - 1/k)^n$ for each $i$, it follows that
\begin{equation}
\label{ineq:neg-asssociation}
 P(E_m \mid E_1 \cap \dots \cap E_{m-1}) < P(E_m).  
\end{equation}
yielding the desired inequality
\[
  {S(n,k)} < \prod_{1\le i\le k} P(E_i) = \left( 1 - \left( 1 - \frac{1}{k} \right)^n \right)^k.
\]
The inequality \eqref{ineq:neg-asssociation} can be made rigorous by invoking negative association theory; see e. g. \cite{doubhashi_devdatt_ranjan_1998}.

\subsection{A generic formal construction of the Bino-Bino expansion}

We now show how de Moivre's and Laplace's original ideas for deriving the asymptotic approximations \eqref{E:bino-exp} and \eqref{E:Lap-lotto} can be extended to obtain a full asymptotic expansion. 

For simplicity, we consider only the case when $s=1$ or $S(n,k)$. Let $t := k^{-1}$, $n=\frac{\rho}t$ and $\lambda_b := k(1-\frac1k)^n=t^{-1}(1-t)^{\frac\rho t}$. Then $(1-\frac{j}{k})^n=(1-jt)^{\frac\rho t}$. Now expand (with respect to $t$) the ratio 
\begin{align*}
	\frac{(1-\frac jk)^n}{(1-\frac1k)^{jn}}
	&= (1-jt)^{\frac\rho t}(1-t)^{-\frac{j\rho}t}
	:= \sum_{m\ge0}c_{m}^{[\text{bb}]}(j) t^m\\
	&= 1-\frac{\rho j(j-1)t}{2}
	+\frac{\rho j(3\rho j^3 - 2(3\rho+4)j^2+
	3\rho j+8)t^2}{24}+\cdots,
\end{align*}
where $c_{m}^{[\text{bb}]}(j)$ is a polynomial in $j$ of degree $2m$. With this expansion, we then obtain formally the approximation (with $t=k^{-1}$)
\begin{align}
	S(n,k) &\sim \sum_{m\ge0}k^{-m}
	\sum_{0\le j\le k}\binom{k}{j}\Lpa{-\frac{\lambda_b}k}^j
	c_{m}^{[\text{bb}]}(j) \nonumber \\
	&= \Lpa{1-\frac{\lambda_b}k}^k
	\biggl(1-\frac{\rho \lambda_b^2(k-1)}{2(k-\lambda_b)^2}
	\label{E:bb}\\
	&\qquad+\frac{\rho\lambda_b^2(k-1)}{24(k-\lambda_b)^4}
	\left(\begin{array}{l}
		\lpa{3\rho(\lambda_b^2-4\lambda_b+2)+8(\lambda_b-3)}k^2\\
		-\lambda_b(3\rho+8)(\lambda_b-4)k-8\lambda_b^2
	\end{array}
	\right)+\cdots\biggr).\nonumber
\end{align}
It can be shown that 
\[
  \sum_{0\le j\le k}\binom{k}{j}\Lpa{-\frac{\lambda_b}k}^j
	c_{m}^{[\text{bb}]}(j) 
	= O\Lpa{\Lpa{1-\frac{\lambda_b}k}^{k-m}\rho^m \lambda_b^{2m}}
  \qquad(m=0,1,\dots).
\]
Thus the (outer) sum in \eqref{E:bb} is expected to be an asymptotic expansion as long as $\rho\lambda_b^2=o(k)$ or $ne^{-2n/k}=o(1)$ for any $\ve>0$.

\subsubsection{Replacing $1-jt$ with $e^{-jt}$}
Another natural choice as we used above \eqref{E:fg} for approximating the factor $1-jt$ is $e^{-jt}$. The construction of such Bino-Exp type is not unique, as already visible from the two expansions \eqref{E:FD} and \eqref{E:PC-an} that we analyzed above. They have not appeared explicitly in the literature as far as we know although they are implicit in Laplace's analysis in the form $(1-e^{-R})^k$ \cite[p.~199]{Laplace1820}, as resulting from his saddle-point expansion, where $R>0$ solves the equation $(n+1)(1-e^{-R})=kR$. 

In addition to the above two expansions \eqref{E:FD} and \eqref{E:PC-an}, we describe yet another one which follows more or less the idea we used above to construct \eqref{E:bb}. By the same notations with $\lambda := ke^{-\frac nk}=t^{-1}e^{-\rho}$, we can expand the ratio (with respect to $t$) 
\begin{align*}
	\Lpa{1-\frac jk}^ne^{\frac{jn}k}
	&= (1-jt)^{\frac \rho t}e^{j\rho}
	:= \sum_{m\ge0}c_{m}^{[\text{be}]}(j) t^m
	= 1-\frac{\rho j^2t}{2}
	+\frac{\rho j^3(3\rho j - 8)t^2}{24}+\cdots,
\end{align*}
where $c_{m}^{[\text{be}]}(j)$ is a polynomial in $j$ of degree $2m$. With this definition, we then obtain the formal expansion (with $t=k^{-1}$)
\begin{align}
	S(n,k) &\sim \sum_{m\ge0}k^{-m}
	\sum_{0\le j\le k}\binom{k}{j}\lpa{-\frac{\lambda}k}^j
	c_{m}^{[\text{be}]}(j)\nonumber \\
	&= \Lpa{1-\frac{\lambda}k}^k
	\biggl(1-\frac{\rho\lambda(\lambda-1)k}{2(k-\lambda)^2}
	\label{E:be}\\
	&\quad+\frac{\rho{\lambda}}{24(k-\lambda)^4}
	\left(\begin{array}{l}
		\lpa{3(\lambda^3 - 6\lambda^2+7\lambda-1)\rho
		+ 8(\lambda^2 - 3\lambda +1)}k^2\\
		+4\lambda\lpa{3(\lambda-1)\rho - 2\lambda(\lambda-3)}k
		- (3\rho + 8)\lambda^2
	\end{array}
	\right)+\cdots\biggr).\nonumber
\end{align}
This expansion is to be compared with \eqref{E:PC-two-terms}: the leading term and the first correction in equation \eqref{E:be} agree with those in equation \eqref{E:PC-two-terms}, while the next correction differs. It can be shown that 
\[
  \sum_{0\le j\le k}\binom{k}{j}\Lpa{-\frac{\lambda}k}^j
	c_{m}^{[\text{be}]}(j) 
	= O\Lpa{\Lpa{1-\frac{\lambda}k}^{k-m}\rho^m {\lambda}^{2m}}
  \qquad(m=0,1,\dots).
\]
Thus the (outer) sum in \eqref{E:be} is expected to be an asymptotic expansion as long as $\rho{\lambda}^2=o(k)$ or $ne^{-2n/k}=o(1)$ for any $\ve>0$.

\subsection{Expansions of exponential type: Laplace and Cayley}
The expansions above retain the binomial coefficients $\binom{k}{j}$ in the sum \eqref{E:Snk-sum}, which, when further approximated by exponential factors of the form $\frac{k^j}{j!}$, yields expansions with leading terms of the form $e^{-\lambda}$. The first such approximations appeared in Laplace's memoirs \cite[\S~XXVI]{Laplace1785} and \cite[\S~XLIV]{Laplace1786} as a simple consequence of his saddle-point expansion \eqref{E:Lap-SPE}. Other variants are listed in the following table.
\medskip
\begin{center}
\begin{tabular}{c|cccc}
	Reference & \makecell{Laplace 
	\cite{Laplace1785,Laplace1786}} 
	& \makecell{Laplace \cite{Laplace1820}}
	& \makecell{Cayley \cite{Cayley1888}}
	& \makecell{Menon \cite{Menon1973}} \\ 
	Year & 1785--1786 & 1820 & 1888 & 1973 \\ \hline
	$S(n,k)\sim$ & $e^{-ke^{-\frac nk}}$ 
	& $e^{-ke^{-\frac{n+1}k}}$ 
	& $e^{-ke^{-\frac nk}}$
	& $e^{-ke^{-\frac nk+\frac1{2k}-\frac1{12k^2}}}$
\end{tabular}	
\end{center}
\medskip

Motivated by these forms, it is natural to consider the generic form:
\begin{equation}\label{E:ee-gen}
	S(n,k) \sim e^{-\lambda(\alpha)},\eqtext{with}
	\lambda(\alpha) := k\exp\Lpa{-\frac nk -\frac{\alpha}k}
	= ke^{-\rho-\alpha t},
\end{equation}
for some constant $\alpha\in\mathbb{R}$. Then we begin with the expansion
\begin{equation}\label{E:bnep}
	\begin{split}
	\binom{k}{j}\Lpa{1-\frac jk}^n
	&= \frac{\lambda(\alpha)^j}{j!}
	(1-jt)^{\frac \rho t}e^{j(\rho+\alpha t)}
	\prod_{1\le l<j}(1-lt)\\
	&:= \frac{\lambda(\alpha)^j}{j!}
	\sum_{m\ge0}c_{m}^{[\text{ee}]}(j) t^m,
	\end{split}
\end{equation}
where $c_{m}^{[\text{ee}]}(j)$ is a polynomial in $j$ of degree $2m$. The first few terms are given by 
\begin{align*}	
	\sum_{m\ge0}c_{m}^{[\text{ee}]}(j) t^m
	= 1-\frac{jt}{2}\lpa{(\rho+1)j-2\alpha-1}
	+\frac{jt^2}{24}
	\left(\begin{array}{l}
		3(\rho+1)^2j^3\\
		-2(6(\rho+1)\alpha+7\rho+5)j^2\\
		+3(4\alpha^2+4\alpha+3)j-2
	\end{array}\right)+\cdots.
\end{align*} 
With these polynomials, we then obtain the formal expansion
\begin{align}
	S(n,k) &\sim \sum_{m\ge0}k^{-m}
	\sum_{j\ge0}\frac{(-\lambda(\alpha))^j}{j!}\,
	c_{m}^{[\text{ee}]}(j)\nonumber\\
	&= e^{-\lambda(\alpha)}\Lpa{1-
	\frac{\lambda(\alpha)}{2k}\lpa{(\lambda(\alpha)-1)\rho
	+\lambda(\alpha)+2\alpha}
	+\cdots}.\label{E:ee0}
\end{align}
In particular, the first error term satisfies 
\begin{align}\label{E:varying-d}
	-\frac{1}{2k}\times
	\begin{cases}
		\lambda\lpa{(\lambda-1)\rho+\lambda}, 
		&\text{if }\alpha=0 \text{ (Laplace, Cayley)};\\
		\lambda(1)\lpa{(\lambda(1)-1)\rho+\lambda(1)+2}, 
		&\text{if }\alpha=1 \text{ (Laplace)};\\
		\lambda(-\frac12)(\lambda(-\frac12)-1)(\rho+1), 
		&\text{if }\alpha=-\frac12 \text{ (Menon)}.
	\end{cases}
\end{align}
Menon \cite{Menon1973} obtained a more refined expansion than Laplace's (with $\lambda_e' := ke^{-\frac nk+\frac1{2k}-\frac1{12k^2}}$)
\begin{align}\label{E:menon-2}
	S(n,k) = e^{-\lambda_e'}
	\Lpa{1-\frac{\lambda_e'(\lambda_e'-1)}{k^2}
	\Lpa{\frac{n+k}2-\frac14}+\cdots},
\end{align}
which corresponds, up to an error of order $k^{-2}$, to $\alpha=-\frac12$; see \S~\ref{S:error-reduced} for an extension.

\medskip

\noindent\textbf{Error-reduced expansions.} While other values of $\alpha$ in \eqref{E:varying-d} may be selected, none of these seems to be \emph{optimal} for varying $k$ in the sense of making the error term as small as possible. Instead, if we take (recursively)
\begin{align}\label{E:d}
	\alpha
	=\frac{\rho}2-\frac{(\rho+1)}2\,\lambda(\alpha),
\end{align}
then the first error term after $1$ in \eqref{E:ee0} (involving $\alpha$) becomes zero, which provides better numerical efficiency. With this choice of $\alpha$, the expansion \eqref{E:ee0} becomes
\begin{align}\label{E:ee}
	S(n,k) &= e^{-\lee}\Lpa{1-
	\frac{\lee}{24k^2}
	\lpa{3\rho^2\lee(\lee - 2) 
	-2\rho(\lee^2-6\lee+4)-\lee^2}
	+\cdots}.
\end{align}
Here $\lee$ is given \emph{recursively} by, in view of \eqref{E:ee-gen} and \eqref{E:d}, 
\[
	\lee
	= k\exp\Lpa{-\frac{n}{k}-
	\frac{\rho}{2k}+\frac{(\rho+1)}{2k}\,\lee},
\]
which is solved as
\begin{align}\label{E:La}
	\lee = \frac{2k}{\rho+1}\,
	T\Lpa{\frac{(\rho+1)e^{-\rho-\frac\rho{2k}}}2},
\end{align}
where $T(z)=-W(-z)$ denotes the Cayley tree function and satisfies the equation $T(z)=ze^{T(z)}$ with the Taylor expansion $T(z) = \sum_{j\ge1}\frac{j^{j-1}}{j!}z^j$.
              
For large $\rho = \frac nk>\frac12\log n$, the special form \eqref{E:La} has the Taylor expansion
\begin{equation}\label{E:lambda-star-k}
	\begin{split}
	\lee
	&= \sum_{h\ge1}t^{h-1}
	\sum_{1\le m\le h}\frac{m^{h-1}
	(\rho+1)^{m-1}\lambda^m(-\rho)^{h-m}}
	{m!(h-m)!2^{h-1}}\\
	&= \lambda +\frac{\lambda(\lambda-1)\rho+\lambda^2}
	{2k}+\frac{\lambda}{8k^2}
	\lpa{(\lambda-1)\rho+\lambda}
	\lpa{(3\lambda-1)\rho+3\lambda}+\cdots.
	\end{split}
\end{equation}
We see that the major error term \eqref{E:varying-d} is now \enquote{incorporated} into $\lee$ (as the second-order term in the right-hand side of \eqref{E:lambda-star-k}). Such an \enquote{error shift} or \enquote{error reduction} technique is completely general (at least formally) and can be applied to other expansions; see Section~\ref{S:error-reduced}.

While the solution of $\lee$ in \eqref{E:La} looks more complicated than the original $\lambda$, its numerical evaluation is rather straightforward in most modern symbolic systems; on the other hand, one may use the expansion \eqref{E:lambda-star-k} for a similar numerical purpose if Lambert $W$-function is not available. Additionally, the error terms in the two expansions \eqref{E:ee0} and \eqref{E:ee} show further advantages of \eqref{E:ee} because of wider range of uniformity. Note that the terms inside the large parentheses in \eqref{E:ee} are of order $1+O(e^{-\eta})$ when $k\le \frac{3n}{\log n+\log\log n+\eta}$, which approaches $1$ as $\eta$ increases. 

\subsubsection{Replacing $1-jt$ with $(1-t)^{-j}$}
In the course of re-deriving Laplace's exponential approximation $S(n,k)\sim e^{-\lambda}$, Cayley \cite{Cayley1888} first argues that $S(n,k)$ is close to $e^{-\lambda_b}$, where $\lambda_b := k(1-\frac1k)^n$, and then approximates such an expansion in terms of $\lambda$. We construct the full asymptotic expansion using $\lambda_b$ by the same approach used above, beginning with
\[
	(1-jt)^{\frac\rho t}
	(1-t)^{-\frac{j\rho}t}\prod_{1\le l<j}(1-lt)
	= \sum_{m\ge0}c_m^{[eb]}(j) t^m.
\]
Then from this we derive the expansion
\begin{align}
	S(n,k) &\sim \sum_{m\ge0}k^{-m}
	\sum_{j\ge0}\frac{(-\lambda_b)^j}{j!}\,
	c_{m}^{[\text{eb}]}(j)\nonumber\\
	&= e^{-\lambda_b}\Lpa{1-
	\frac{(\rho+1)\lambda_b^2}{2k}
	+\frac{\lambda_b^2}{24k^2}
	\lpa{3(\rho+1)^2\lambda_b^2-4(3\rho^2+4\rho+2)\lambda_b
	+6\rho(\rho-2)}
	+\cdots}\label{E:eb}.
\end{align}

\section{Numerical discussions}
\label{S:comparison} 

In this section, we first present graphical comparisons of the various approximations to $S(n,k)$ discussed above. We then analyze their principal differences within the central range, with particular emphasis on explaining the discrepancies observed in Figure~\ref{F:compare_all}.

\subsection{Graphical renderings}

We assess the accuracy of the six types of asymptotic expansions for $S(n,k)$ introduced in the preceding sections, using the following notation:
\begin{align}\label{E:lll}
	\lambda := ke^{-\frac nk}, \quad
	\lambda_b:= k(1-\tfrac1k)^n\eqtext{and}
	\lambda_e' := ke^{-\frac nk+\frac1{2k}-\frac1{12k^2}},
\end{align}
and the expansions:
\begin{equation}\label{E:six}
	\begin{cases}
		\text{de Moivre \eqref{E:bb}:}&D(n,k)
		\sim\lpa{1-\frac{\lambda_b}k}^k
		\lpa{1-\frac{\rho \lambda_b^2(k-1)}{2(k-\lambda_b)^2}}\\
		\text{Finite difference \eqref{E:FD2}:}& F(n,k)
		\sim \lpa{1-\frac{\lambda}{k}}^k
		\lpa{1-k\Lambda D_{n,k}(1)+\binom{k}{2}
		\Lambda^2D_{n,k}(2)}\\
		\text{Poisson-Charlier \eqref{E:PC-two-terms}:} 
		&P(n,k) \sim \lpa{1-\frac{\lambda}{k}}^k
		\lpa{1-\frac{\rho k\lambda(\lambda-1)}
		{2(k-\lambda)^2}}\\
		\text{Laplace \eqref{E:ee0}:}&L_\alpha(n,k)
		\sim e^{-\lambda(\alpha)}\lpa{1-
		\frac{\lambda(\alpha)}{2k}\lpa{(\lambda(\alpha)-1)\rho
		+\lambda(\alpha)+2\alpha}}\\
		\text{Cayley \eqref{E:eb}:} &C(n,k) 
		\sim e^{-\lambda_b}\lpa{1-
		\frac{(\rho+1)\lambda_b^2}{2k}}\\
		\text{Menon \eqref{E:menon-2}:} &M(n,k) \sim
		e^{-\lambda_e'}\lpa{1-\frac{(2(\rho+1)k-1)
		\lambda_e'(\lambda_e'-1)}{4k^2}}.
	\end{cases}
\end{equation}
The finite difference version ($F$) and the Poisson-Charlier ($P$) have the same leading term, but with different errors. 

Our numerical comparisons are based on the absolute error measure:
\begin{equation}\label{E:error-value}
	\Delta_f(n,k) :=\left|\frac{S(n,k)}{f(n,k)}-1\right|,
\end{equation}
where $f(n,k)$ denotes one of the six approximations in \eqref{E:six}. Their smallness is summarized and illustrated through Figure~\ref{F:compare_all},

 \begin{figure}[!ht]
 \begin{center}
 \begin{minipage}[t]{0.49\textwidth}
 \begin{overpic}[width=\textwidth]{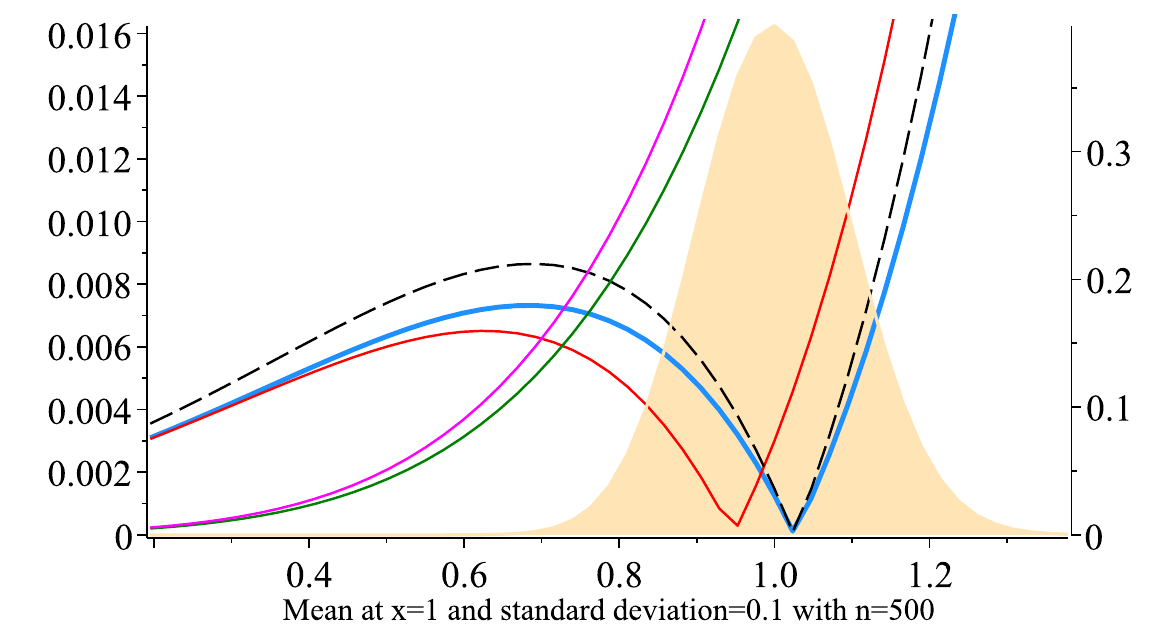}
 \put(79,33){\color{blue}{$P\& F$}}
 \put(37,33){\color{blue}{$M$}}
 \put(51,16){\color{blue}{$L_{0}$}}
 \put(52,42){\color{blue}{$C$}}
 \put(58,35){\color{blue}{$D$}}
 \end{overpic}
 \end{minipage}
 \hfill
 \begin{minipage}[t]{0.49\textwidth}
 \begin{overpic}[width=\textwidth]{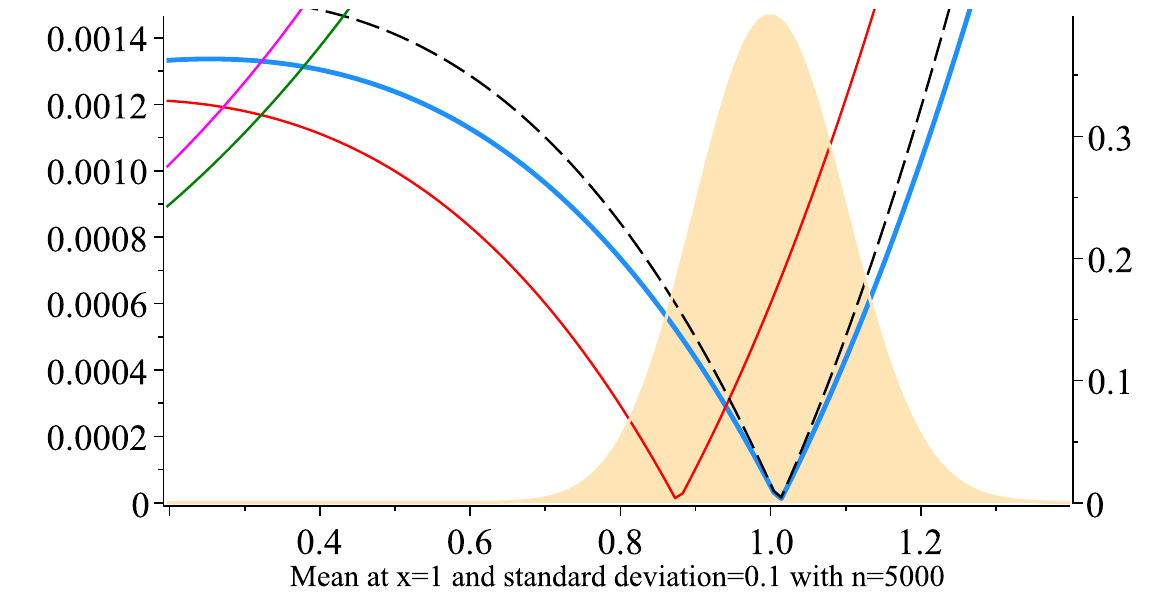}
 \put(35,38){\color{blue}{$P\& F$}}
 \put(38,47){\color{blue}{$M$}}
 \put(33,31){\color{blue}{$L_{0}$}}
 \put(20,49){\color{blue}{$C$}}
 \put(16,32){\color{blue}{$D$}}
 \end{overpic}
 \end{minipage}
 
\caption{\emph{Closeness $\Delta_f$ of the different approximations to $S(n,k)$, as defined in \eqref{E:six}: Cayley ($C$), de Moivre ($D$), Laplace ($L$), Menon ($M$), and our Poisson--Charlier ($P$). The plots correspond to $n=500$ (left) and $n=5000$ (right). The left vertical axis shows the scale of the error terms $\Delta_f$, while the right vertical axis represents the distribution of $\frac{1}{B_n}\Stirling{n}{k}$, shown in soft yellow for ease of comparison. The mean is located at $x=1$, and each increment of $0.1$ on the horizontal axis corresponds to one standard deviation.}} \label{F:compare_all}
\end{center}
\end{figure}

\subsection{Asymptotics of $\Delta_f(n,k)$ for $k$ near the mean}

To clarify why the errors introduced by the two binomial approximations: Bino-Bino $D(n,k)$ and Exp-Bino $C(n,k)$ (i.e., those based on approximating $1-jt$ by $(1-t)^j$), behave so differently from the others, we observe the following pattern: their errors are smaller than those of all other approximations when $k$ is smaller than and away from the mean of the distribution, but they grow much more rapidly and fluctuate more strongly as $k$ approaches the mean. We begin by examining the case when
\[
  k\approx k_c := \frac{n}{\log n-c\log\log n},
	\qquad(c\in\mathbb{R}).
\]
In this case, 
\[
	\lambda_b \sim \lambda\Lpa{1-\frac{n}{2k_c^2}},
	\eqtext{and}
	\lambda_e' \sim \lambda\Lpa{1+\frac{1}{2k_c}},
\]
and the errors in the approximations \eqref{E:six} have the orders:
\begin{align}
	n\Delta_f(n,k)
	\asymp (\log n)^{c}\times
	\begin{cases}
		(\log n)^c, & \text{if }f\in\{D,C\}\\
		|\log n-c\log\log n|, & \text{if }f=F\\
		|(\log n)^c-\log n+c\log\log n|, 
		& \text{if }f\in\{P,L_\alpha,M\}.
	\end{cases}
\end{align}
In particular, when $c=1$, 
\begin{align}
	n\Delta_f(n,k)
	\asymp \log n\times
	\begin{cases}
		\log n, & \text{if }f\in\{D,F,C\}\\
		\log\log n, & \text{if }f\in\{P,L_\alpha,M\}.
	\end{cases}
\end{align}
Thus, for $c<1$ the binomial approximations are more accurate, whereas for $c>1$ the finite-difference expansion $F$ yields superior performance. Near the central range, the exponential approximations achieve the smallest errors.

\subsection{The cusps in Figure~\ref{F:compare_all}}

Two distinct behaviors of $\Delta_f$ are observed in Figure~\ref{F:compare_all} for $k$ near the central range. The errors either increase monotonically (as in the cases of $\Delta_D$ and $\Delta_C$), or exhibit a sign change (as for $\Delta_P$, $\Delta_F$, $\Delta_{L_\alpha}$, and $\Delta_M$), which gives rise to the observed \enquote{cusps} in their absolute values. These phenomena are direct consequences of the following asymptotic approximations:
\begin{equation}
	\begin{split}
	&n\Lpa{\frac{S(n,k)}{f(n,k)}-1}\\
	&\quad\sim \begin{cases}
		-\frac12(\log n)^{2c},
		& \text{if }f \in\{D,C\};\\
 		-\frac12(\log n)^c\lpa{(\log n)^c-\log n+c\log\log n}
    ,& \text{if }f \in\{P,F,L_\alpha,M\}.
	\end{cases}
	\end{split}	
\end{equation}

\subsection{Intersections between $D$ and others}
We begin by rewriting all the error terms in \eqref{E:six} in terms of $\lambda$:
\begin{equation}\label{E:six-2}
	\begin{array}{llclcl}
		\text{de Moivre \eqref{E:bb}}&D(n,k)
		&\sim&\lpa{1-\frac{\lambda_b}k}^k&\times&
		\lpa{1-\frac{\rho\lambda^2}{2k}}\\
		\begin{array}{@{}l@{}}
		\text{Finite difference \eqref{E:FD2}}\\
		\text{\& Poisson-Charlier \eqref{E:PC-two-terms}}
		\end{array} & F(n,k) &\sim& 
		\lpa{1-\frac{\lambda}{k}}^k&\times&
		\lpa{1-\frac{\rho\lambda^2}{2k}
		+\frac{\rho\lambda}{2k}}\\[1ex]
		\text{Laplace \eqref{E:ee0}}&L_\alpha(n,k)
		&\sim& e^{-\lambda(\alpha)} &\times& \lpa{1-
		\frac{\rho\lambda^2}{2k}
		+\frac{\rho\lambda}{2k}
		-\frac{\lambda^2}{2k}
		-\frac{\alpha\lambda}{k}}\\
		\text{Cayley \eqref{E:eb}} &C(n,k) 
	  &\sim& e^{-\lambda_b} &\times& \lpa{1-
		\frac{\rho\lambda^2}{2k}\;\,
		\phantom{+\frac{\rho\lambda}{2k}}
		-\frac{\lambda^2}{2k}}\\
		\text{Menon \eqref{E:menon-2}:} & M(n,k)
		&\sim& e^{-\lambda_e'} &\times& \lpa{1-
		\frac{\rho\lambda^2}{2k}
		+\frac{\rho\lambda}{2k}
		-\frac{\lambda^2}{2k}
		+\frac{\lambda}{2k}}.
	\end{array}
\end{equation}
Since our definition of $\Delta_f$ is based on absolute values, any intersection of two curves observed in Figure~\ref{F:compare_all} occurs asymptotically either when their error terms are equal or when they differ only by a sign. Among these possibilities, we are interested in the smaller such intersection point. For instance, the intersection of the curves corresponding to $D$ and $F$ in Figure~\ref{F:compare_all} occurs asymptotically at the point where
\[
	\frac{\rho\lambda^2}{2k}
	\sim -\frac{\rho\lambda^2}{2k}
	+\frac{\rho\lambda}{2k},\eqtext{or}
	\lambda\sim\frac12, \eqtext{or}k\sim\frac n{W(2n)}.
\]
Similarly, for the intersection of $D$ and $L_\alpha$ on Figure~\ref{F:compare_all}, we have
\[
	\lambda\sim\frac{\rho-2\alpha}{2\rho+1},
	\eqtext{or} \rho = W(2n) 
	+ \frac{4\alpha+1}{2W(2n)}+\cdots.
\]
Finally, the two curves $D$ and $M$ intersect asymptotically at 
\[
	\lambda\sim\frac{\rho+1}{2\rho+1}
	\eqtext{or} \rho = W(2n)
	- \frac{1}{2W(2n)}+\cdots.
\]
Since $W(2n)=\log n -\log\log n + \log 2+ o(1)$, we see that these intersections occur before the mean of the distribution (asymptotically at $\frac{n}{W(n)}-1$), as is visible from Figure~\ref{F:compare_all}.

\section{Refined error-reduced expansions}
\label{S:error-reduced}

Motivated by the expansion \eqref{E:ee} and its use of a free parameter for error reduction, we consider further expansions obtained by the same procedure.

\subsection{Refined binomial expansions}

As in the derivation of \eqref{E:ee}, define
\[
	\lambda_b(d) = k\left(1-\frac{1}{k}\right)^{n+d}.
\]
The value of $d$ that eliminates the leading error term in \eqref{E:bb} is given by (with $x=\lambda_b(d)$)
\[
	d = -\frac{\rho(k-1)x}
	{2(k-x)}.
\]
This choice then leads to the recursive equation satisfied by the optimal $x=\lambda_b(d)$:
\[
	x = k\Lpa{1-\frac1k}^{n-\frac{x(k-1)n}
	{2k(k-x)}},
\]
with $x>0$. As no simple exact solution is available for such an equation for $x$, we use the approximate equation 
\[
	x = k\Lpa{1-\frac1k}^{n-\frac{xn}{2k}},
\]
yielding the solution
\[
	\lbb = \frac{2k}{n\log\frac1{1-\frac1k}}
	\,T\llpa{\frac n2\Lpa{1-\frac1k}^n\log\frac1{1-\frac1k}}.
\]
With this $\lbb$, we then have 
\begin{equation}\label{E:stir-lambda-bb}
  S(n,k)
  = \Lpa{1-\frac{\lbb} k}^k
	\llpa{1-
	\frac{\rho\lbb^2(3\rho{\lbb} 
	- 6\rho - 8{\lbb} + 18)}{24(k-{\lbb})^2}
	+\cdots}
\end{equation}
Asymptotically, $\lbb$ satisfies, with $\lambda_b = k(1-\frac1k)^n$,
\begin{align*}
	\lbb &= \sum_{m\ge0}\frac{t^m}{m!}
	\sum_{0\le j\le m}\stirling{m}{j}
	(j+1)^{j-1}\lambda_b^{j+1}\Lpa{\frac\rho2}^j
	= \lambda_b +\frac{\rho\lambda_b^2}{2k}
	+\frac{\rho\lambda_b^2(3\rho\lambda_b+2)}{8k^2}+\cdots,
\end{align*}
where $\stirling{m}{j}$ denotes the signless Stirling numbers of the first kind. In terms of $\lambda$, this yields 
\[
	\lbb = \lambda +\frac{\rho\lambda(\lambda-1)}
	{2k}+\frac{3\rho^2\lambda(3\lambda-1)(\lambda-1)
	+2\rho\lambda(3\lambda-4)}{24k^2}
	+\cdots.
\]

The same procedure applies to extend the Bino-Exp expansion \eqref{E:be}, and we obtain 
\begin{equation}\label{E:stir-lambda-be}
	S(n,k) = \Lpa{1-\frac{\lbe}k}^k
	\llpa{1-\frac{3\rho^2\lbe^2(\lbe - 2) - 
	8\rho\lbe(\lbe^2 - 3\lbe + 1)}
	{24(k-\lbe)^2}+\cdots},
\end{equation}
where $\lbe>0$ solves the equation
\[
	x = k e^{-\frac{n}{k}+\frac{(x-1)n}{2k^2}}.
\]
The solution is given by
\[
	\lbe = \frac{2k^2}{n}
	T\Lpa{\frac{n}{2k}e^{-\frac nk-\frac{n}{2k^2}}},
\]
which has the same form as \eqref{E:La}. Thus, by \eqref{E:lambda-star-k}, we have 
\begin{align*}
	\lbe 
	&= \sum_{h\ge1}t^{h-1}\Lpa{\frac{\rho}2}^{h-1}
	\sum_{1\le m\le h}\frac{(-1)^{h-m}\lambda^m m^{h-1}}
	{m!(h-m)!}\\
	&= \lambda+\frac{\rho\lambda(\lambda-1)}{2k}
	+\frac{\rho^2\lambda(3\lambda-1)(\lambda-1)}
	{8k^2} +\cdots.
\end{align*}

\subsection{Refined exponential expansions}

Refining the Exp-Bino expansion \eqref{E:eb} by the same error-reduction procedure, we obtain 
\begin{equation}\label{E:stir-lambda-eb}
	S(n,k) = e^{-\leb}
	\llpa{1-\frac{\leb^2}{24(k-\leb)^2}
	\lpa{3\rho^2(\leb - 2) -2\rho(\leb-3)
	-\leb-6}+\cdots},
\end{equation}
where $\leb>0$ solves the equation
\[
	x = k\Lpa{1 - \frac1k}^{n
	- \frac{\rho+1}2x},
\]
with the solution
\[
	\leb = \frac{2k}{(n+k)\log\frac1{1-\frac1k}}
	T\Lpa{\frac{n+k}2\Lpa{1-\frac1k}^{n}
	\log\frac1{1-\frac1k}}.
\] 
When $k\le \frac{2n}{\log n}$, we have 
\begin{align*}
	\leb &= \sum_{m\ge0}\frac{t^m}{m!}
	\sum_{0\le j\le m}\stirling{m}{j}
	(j+1)^{j-1}\lambda_b^{j+1}\Lpa{\frac{\rho+1}2}^j\\
	&= \lambda_b+\frac{(\rho+1)\lambda_b^2}{2k}
	+\frac{(\rho+1)\lambda_b^2(3\rho\lambda_b+3\lambda_b+2)}
	{8k^2} +\cdots.
\end{align*}

Following Menon~\cite{Menon1973}'s expansion \eqref{E:menon-2} with
\[
  \lambda_e'
	:= k\exp\Lpa{-\frac nk+\frac1{2k}-\frac1{12k^2}},
\]
we can extend the same error-reduction technique by considering $\hat \lambda = ke^{-\rho-\alpha t-\beta t^2}$ and then identifying the optimal choices for $(\alpha,\beta)$: $\alpha$ is given in \eqref{E:d} and 
\[
	\beta = -\frac{3\hat\lambda(\hat\lambda -2)\rho^2
	-2({\hat\lambda}^2-6\hat\lambda+4)
	\rho-\hat{\lambda}^2}{24}.
\]
While we cannot solve the resulting equation for 
\begin{align}\label{E:hat-lambda}
	\hat\lambda = \lambda\exp\Lpa{- 
	\frac{\rho-(\rho+1)\hat\lambda}{2k}
	-\frac{8\rho+6\rho(\rho-2)\hat\lambda-
	(\rho-1)(3\rho+1){\hat\lambda}^2}{24k^2}},
\end{align}
its asymptotic and numerical values can be readily computed:
\begin{align*}
	\hat\lambda 
	&= \lambda +\frac{\lambda((\lambda-1)\rho+\lambda)}{2k}
	+\frac{\lambda\left\{3(4\lambda^2-6\lambda+1)\rho^2+8(2\lambda^2-1)\rho+8\lambda^2\right\}}{24k^2}
	+\cdots 
\end{align*}
Note that the equation \eqref{E:hat-lambda} is of the form 
\begin{align*}
	\hat\lambda&=y e^{d_1\hat\lambda+d_2{\hat\lambda}^2}
	\eqtext{or}
	m[y^m]\hat\lambda = [t^{m-1}]
	e^{m(d_1t+d_2t^2)},
\end{align*}
where 
\[
	(y,d_1,d_2) := \Lpa{\lambda 
	e^{-\rho(\frac1{2k}+\frac1{3k^2})},
	\frac{\rho+1}{2k}
	-\frac{\rho(\rho-2)}{4k^2},
	\frac{(\rho-1)(3\rho+1)}{24k^2}}.
\]
Thus by Lagrange inversion formula:
\begin{align*}
	\hat\lambda 
	= \sum_{m\ge1}\frac{y^m}m
	\sum_{0\le j\le\tr{\frac12(m-1)}}
  \frac{m^{m-1-j}d_2^jd_1^{m-1-2j}}{j!(m-1-2j)!},
\end{align*}
which is expressible in terms of Hermite polynomials.

These choices then give
\begin{equation}\label{E:stir-lambda-menon}
  S(n,k)
  = e^{-\hat{\lambda}}
	\Lpa{1+O\lpa{k^{-3} \rho^3 (\hat{\lambda}
	+{\hat{\lambda}^{4}})}},
\end{equation}
where the $O$-term is $o(1)$ in the range $k\le \frac{4n}{\log n+(2+\ve)\log\log n}$.

\subsection{Numerical efficiency}

How do these refined expansions compare numerically? Since all four parameters $\lambda_{xy}$ with $x,y\in\{b,e\}$ are asymptotically equivalent to $\lambda$, we can express every first error term in terms of $\lambda$ alone, as shown in Table~\ref{tab:first_error_term}.
\begin{table}[htbp]
\centering
\renewcommand*{\arraystretch}{1.5}	
\caption{First error terms for the four types of expansions} 
\begin{tabular}{clcccc}
\hline
Type & First error term times $-\frac{24k^2}{\lambda}$ \\ \hline
Exp-Exp & $3\rho^2\lambda(\lambda - 2) -2\rho(\lambda^2-6\lambda+4) -\lambda^2$ \\
Exp-Bino & $ 3\rho^2\lambda(\lambda - 2) -2\rho(\lambda^2-3\lambda) -\lambda(\lambda+6)$\\
Bino-Bino & $3\rho^2\lambda(\lambda - 2) +2\rho(2\lambda^2+3\lambda)$\\
Bino-Exp & $3\rho^2\lambda(\lambda - 2) + 2\rho(2\lambda^2 + 6\lambda - 4)$\\ 
\hline
\end{tabular}
\label{tab:first_error_term}
\end{table}

Taking $n = 1000$ as a representative case, Figure~\ref{fig:four_images} displays the error functions \eqref{E:error-value} for both the original and modified estimates across four distribution scenarios: Exp-Exp \eqref{E:ee0} vs.~\eqref{E:ee}, Bino-Exp \eqref{E:be} vs.~\eqref{E:stir-lambda-be}, Bino-Bino \eqref{E:bb} vs.~\eqref{E:stir-lambda-bb}, and Exp-Bino \eqref{E:eb} vs.~\eqref{E:stir-lambda-eb}. To highlight the significant reduction in error, each subplot uses dual-axis scaling: the left $y$-axis corresponds to the original estimate (solid lines), while the right $y$-axis corresponds to the modified version (dashed lines).

\begin{figure}[!ht]
  \centering
  \begin{subfigure}[t]{0.45\textwidth}
    \centering
    \includegraphics[width=\textwidth]{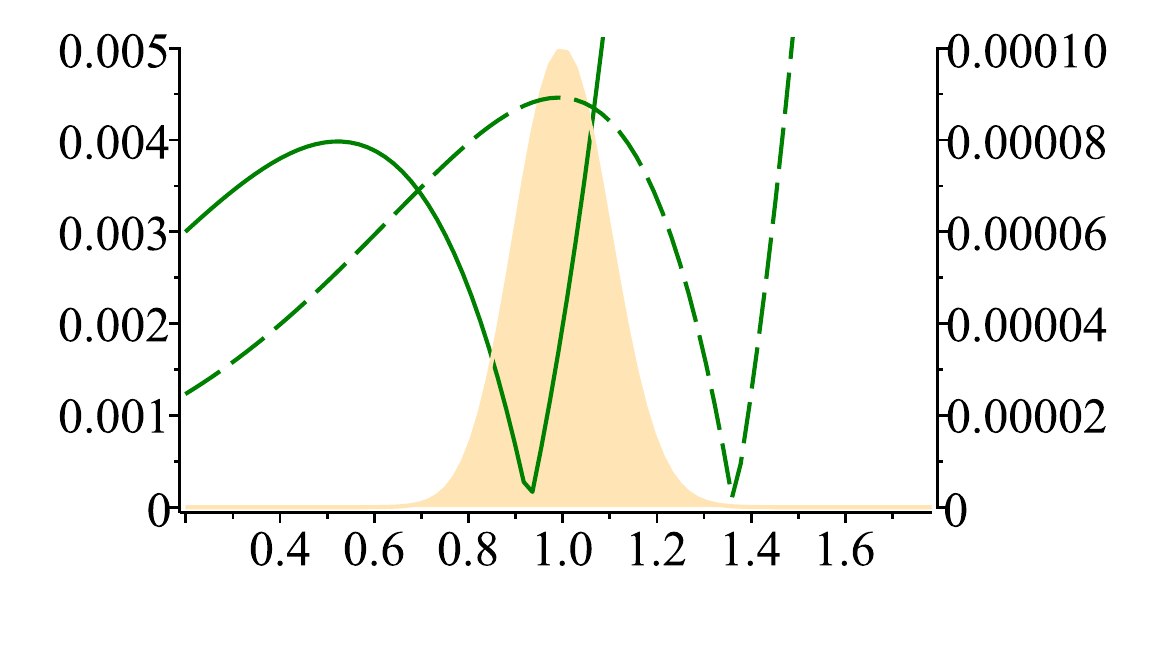}\\[-1.5ex]
    {\footnotesize
	\textcolor{green!50!black}{\raisebox{0.45ex}{\rule{1.6em}{1.0pt}}\, \eqref{E:ee0}} \,with $\alpha=0$ \,vs\,
	\textcolor{green!50!black}{\raisebox{0.45ex}{\rule{0.4em}{1pt}\hspace{0.2em}\rule{0.4em}{1pt}\hspace{0.2em}\rule{0.4em}{1pt}}\, \eqref{E:ee}}}
    \caption{Exp-Exp}
  \end{subfigure}
  \hfill
  \begin{subfigure}[t]{0.45\textwidth}
    \centering
    \includegraphics[width=\textwidth]{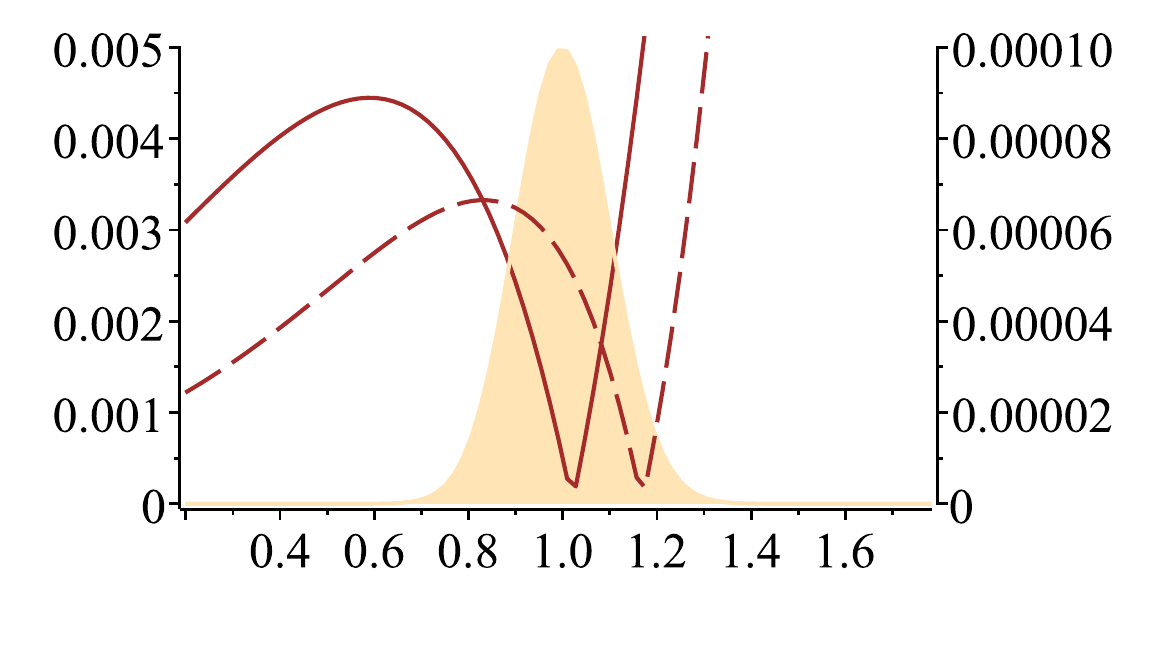}\\[-1.5ex]
    {\footnotesize
	\textcolor{brown}{\raisebox{0.45ex}{\rule{1.6em}{1.0pt}}\, \eqref{E:be}}
    \,vs\,
	\textcolor{brown}{\raisebox{0.45ex}{\rule{0.4em}{1pt}\hspace{0.2em}\rule{0.4em}{1pt}\hspace{0.2em}\rule{0.4em}{1pt}}\, \eqref{E:stir-lambda-be}}}
    \caption{Bino-Exp}
  \end{subfigure}

  \vspace{10pt}

  \begin{subfigure}[t]{0.45\textwidth}
    \centering
    \includegraphics[width=\textwidth]{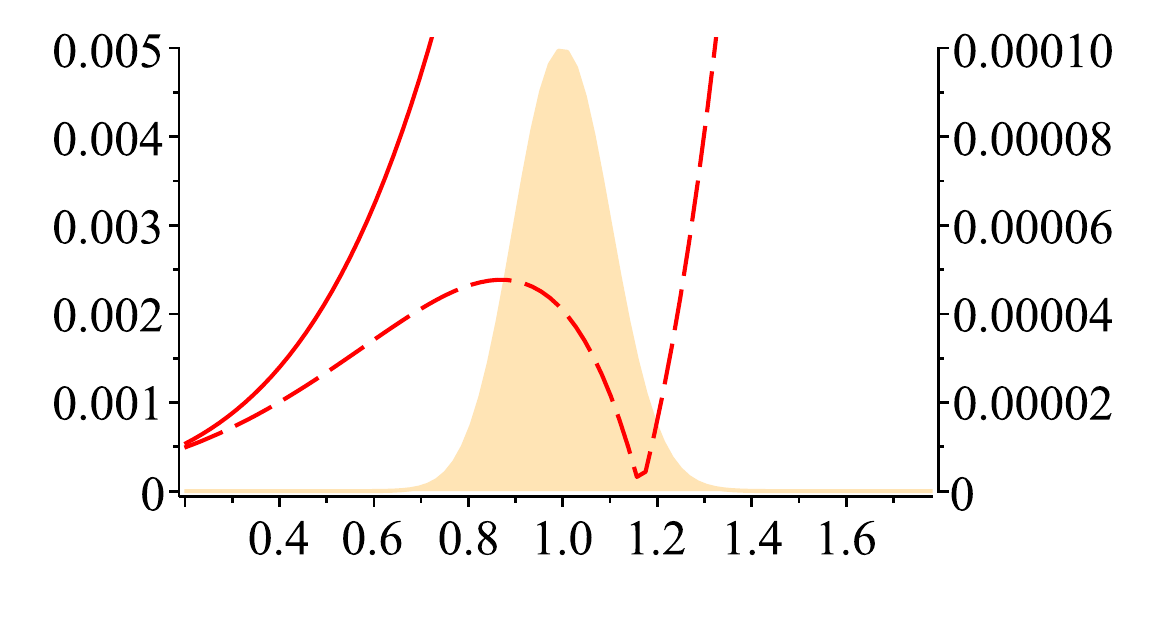}\\[-1.5ex]
    {\footnotesize
	\textcolor{red}{\raisebox{0.45ex}{\rule{1.6em}{1.0pt}}\, \eqref{E:bb}}
    \,vs\,
	\textcolor{red}{\raisebox{0.45ex}{\rule{0.4em}{1pt}\hspace{0.2em}\rule{0.4em}{1pt}\hspace{0.2em}\rule{0.4em}{1pt}}\, \eqref{E:stir-lambda-bb}}}
    \caption{Bino-Bino}
  \end{subfigure}
  \hfill
  \begin{subfigure}[t]{0.45\textwidth}
    \centering
    \includegraphics[width=\textwidth]{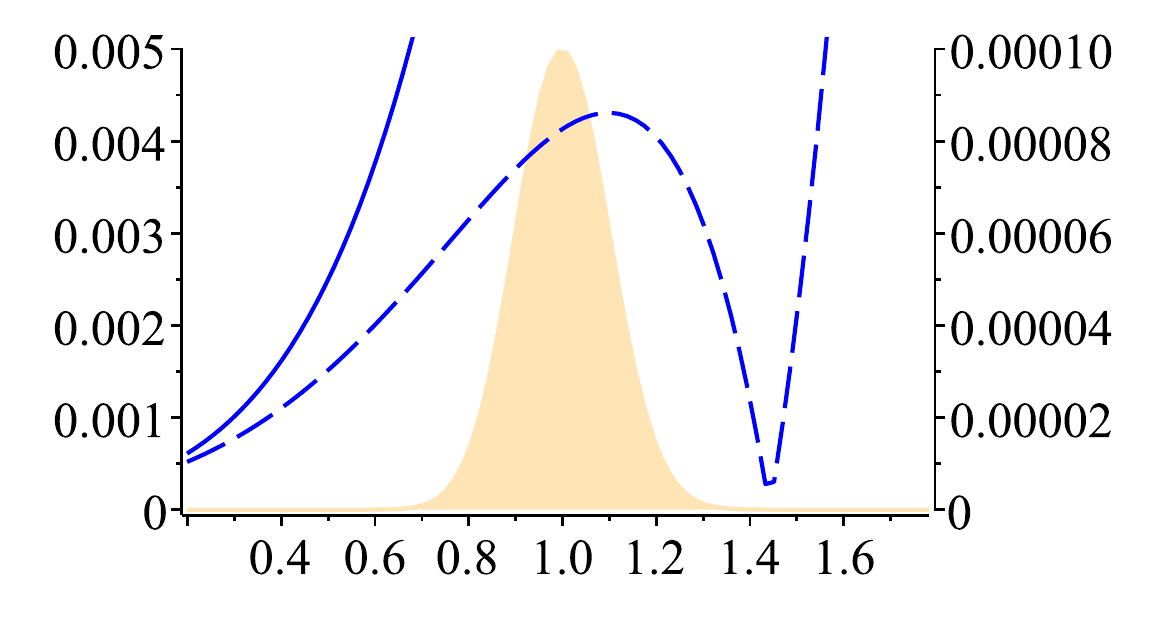}\\[-1.5ex]
    {\footnotesize
	\textcolor{blue}{\raisebox{0.45ex}{\rule{1.6em}{1.0pt}}\, \eqref{E:eb}}
    \,vs\,
	\textcolor{blue}{\raisebox{0.45ex}{\rule{0.4em}{1pt}\hspace{0.2em}\rule{0.4em}{1pt}\hspace{0.2em}\rule{0.4em}{1pt}}\, \eqref{E:stir-lambda-eb}}}
    \caption{Exp-Bino}
  \end{subfigure}
  
  \caption{All four subplots share the same $x$-axis scale (mean at $x=1$, $\mathrm{SD}=0.1$, $n=1000$). The left $y$-axis corresponds to the original error (solid lines) and the right $y$-axis to the modified error (dashed lines). The background soft yellow curve is a normalized histogram of the distribution $\frac{1}{B_n}\Stirling{n}{k}$, included for reference only; its values are not tied to either $y$-axis.
}\label{fig:four_images}
\end{figure}

Figure~\ref{fig:numerical-table} (with $n = 1000$) provides a final verification by comparing theoretical expectations with numerical results. The left panel isolates the modified estimates from Figure~\ref{fig:four_images} (specifically \eqref{E:ee}, \eqref{E:stir-lambda-be}, \eqref{E:stir-lambda-bb}, and \eqref{E:stir-lambda-eb}), while the right panel plots the actual numerical errors computed from the values in Table~\ref{tab:first_error_term}. The close agreement between the two panels confirms that our modified estimates accurately capture the true error behavior across all four distribution models.

\begin{figure}[!ht]
  \centering
  \begin{subfigure}[t]{0.45\textwidth}
    \centering
    \includegraphics[width=\textwidth]{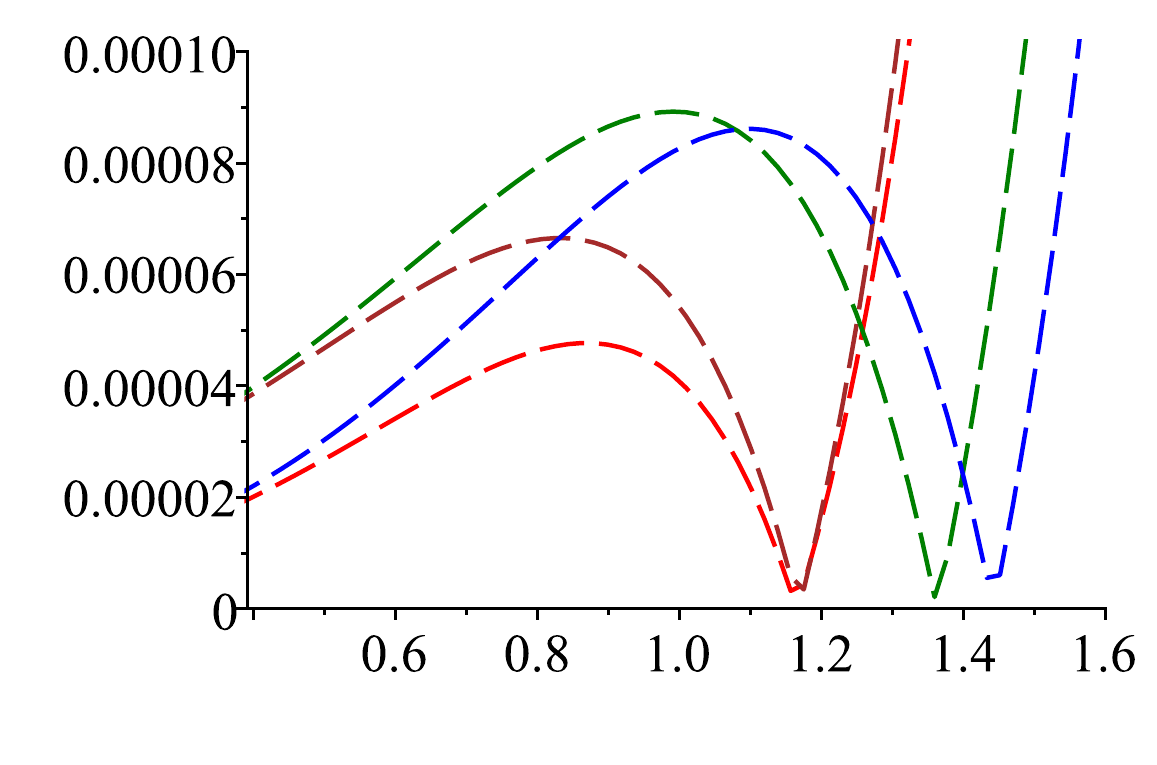}\\[-1.5ex]
    {\footnotesize
	\textcolor{green!50!black}{\raisebox{0.45ex}{\rule{0.4em}{1pt}\hspace{0.2em}\rule{0.4em}{1pt}\hspace{0.2em}\rule{0.4em}{1pt}}\,E-E \eqref{E:ee}}\hspace*{20pt}
    \textcolor{brown}{\raisebox{0.45ex}{\rule{0.4em}{1pt}\hspace{0.2em}\rule{0.4em}{1pt}\hspace{0.2em}\rule{0.4em}{1pt}}\,B-E \eqref{E:stir-lambda-be}}\\
    \textcolor{red}{\raisebox{0.45ex}{\rule{0.4em}{1pt}\hspace{0.2em}\rule{0.4em}{1pt}\hspace{0.2em}\rule{0.4em}{1pt}}\,B-B \eqref{E:stir-lambda-bb}}\hspace*{20pt}
    \textcolor{blue}{\raisebox{0.45ex}{\rule{0.4em}{1pt}\hspace{0.2em}\rule{0.4em}{1pt}\hspace{0.2em}\rule{0.4em}{1pt}}\,E-B \eqref{E:stir-lambda-eb}}}
    \caption{Comparison of the four modified error terms in Figure \ref{fig:four_images}.}
  \end{subfigure}
  \hfill
  \begin{subfigure}[t]{0.45\textwidth}
    \centering
    \includegraphics[width=\textwidth]{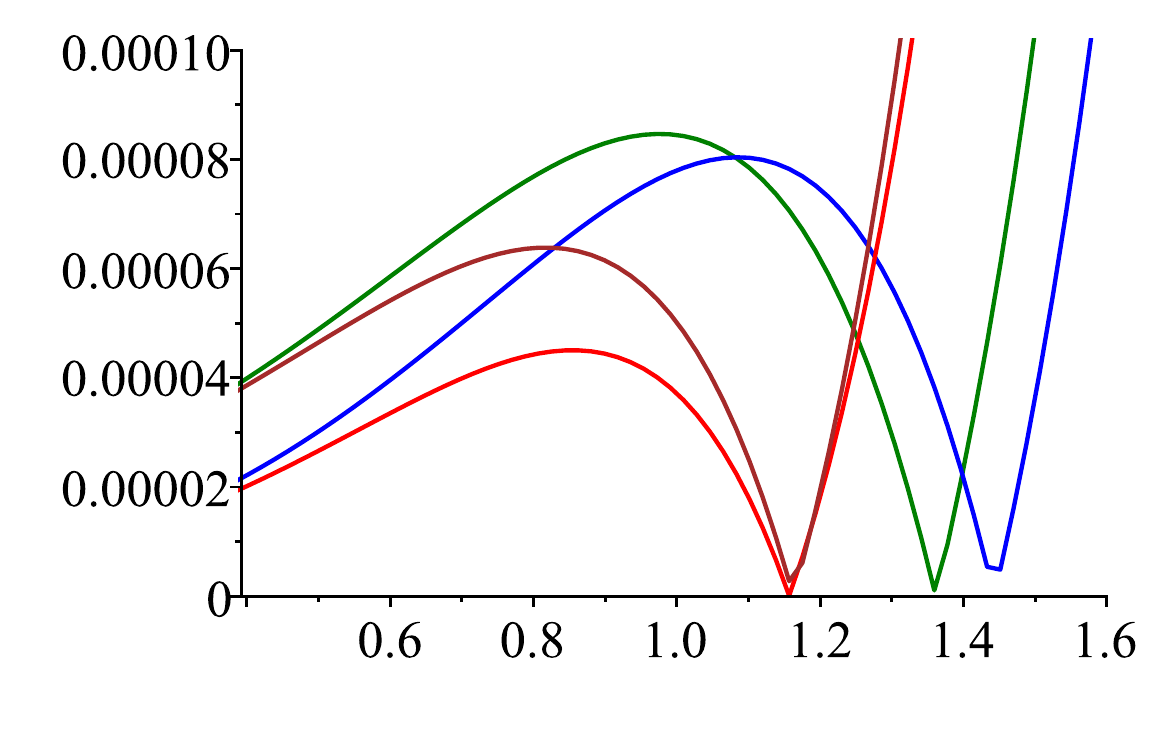}\\[-1.5ex]
    {\footnotesize
	\textcolor{green!50!black}{\raisebox{0.45ex}{\rule{1.6em}{1.0pt}}\,E-E}\hspace*{40pt}
    \textcolor{brown}{\raisebox{0.45ex}{\rule{1.6em}{1.0pt}}\,B-E}\\
    \textcolor{red}{\raisebox{0.45ex}{\rule{1.6em}{1.0pt}}\,B-B}\hspace*{40pt}
    \textcolor{blue}{\raisebox{0.45ex}{\rule{1.6em}{1.0pt}}\,E-B}}
    \caption{Comparison of the four leading error terms in Table \ref{tab:first_error_term}.}
  \end{subfigure}  
  \caption{With $n=1000$, the modified errors from Figure~\ref{fig:four_images} numerically coincide with the leading error terms in Table~\ref{tab:first_error_term}. The axis scaling and color-to-type mapping are consistent with Figure~\ref{fig:four_images}.}\label{fig:numerical-table}
\end{figure}

Figure~\ref{F:menon}, also with $n = 1000$, compares the error functions for the original Menon estimate \eqref{E:menon-2} (solid line, left $y$-axis) and its modified counterpart \eqref{E:stir-lambda-menon} (dashed line, right $y$-axis). The dual-axis scaling again reveals that the modification reduces errors by several orders of magnitude across the relevant range.

\begin{figure}[!ht]
\begin{center}
\includegraphics[width=0.6\textwidth]{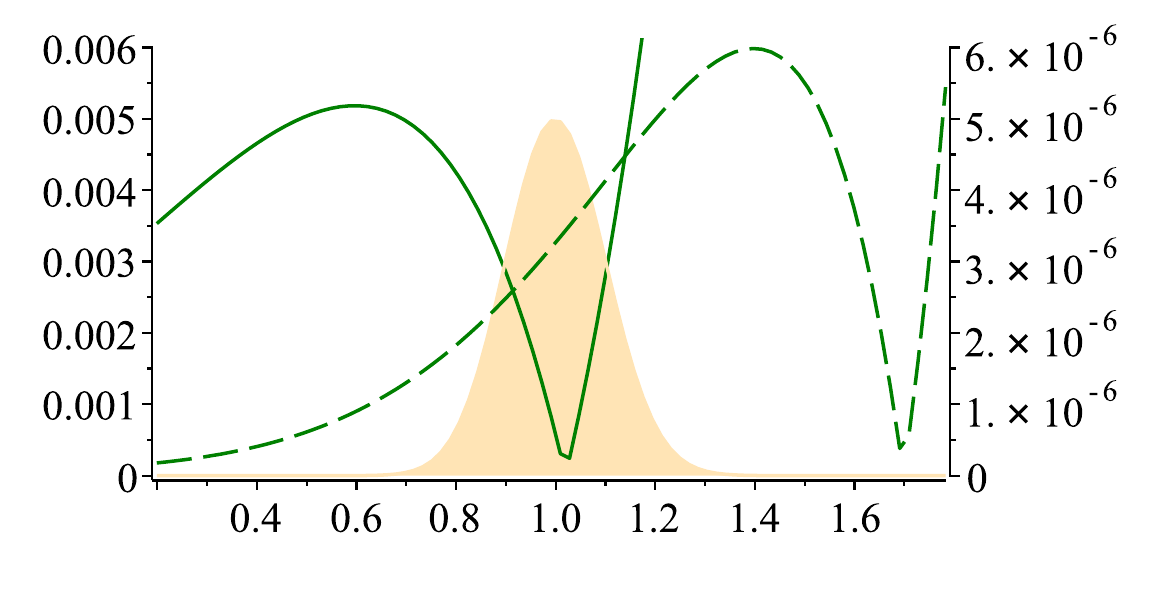}\\[-1.5ex]
{\footnotesize
	\textcolor{green!50!black}{\raisebox{0.45ex}{\rule{1.6em}{1.0pt}}\, \eqref{E:menon-2}}
    \quad vs \quad
	\textcolor{green!50!black}{\raisebox{0.45ex}{\rule{0.4em}{1pt}\hspace{0.2em}\rule{0.4em}{1pt}\hspace{0.2em}\rule{0.4em}{1pt}}\, \eqref{E:stir-lambda-menon}}}
\end{center}
\caption{\emph{Original vs.\ modified Menon error terms (mean $x=1$, $\mathrm{SD}=0.1$, $n=1000$). The left $y$-axis corresponds to the original error (solid) and the right $y$-axis to the modified error (dashed). The background soft yellow curve is a normalized histogram of $\frac{1}{B_n}\Stirling{n}{k}$, included for reference only; its values are not tied to either $y$-axis.}}\label{F:menon}
\end{figure}

\section*{Acknowledgements}

The authors used AI-assisted tools for language polishing, LaTeX editing, and consistency checks. All mathematical and historical claims were independently verified by the authors, who bear sole responsibility for the content of the paper.

\appendix

\numberwithin{equation}{section}

\section{Finer approximations for $\mathbb{E}(X_n)$ and $\mathbb{V}(X_n)$}
\label{S:App1}

Our crude analysis above does not provide optimal error terms in both mean and variance approximations \eqref{E:mu-var2}. By other analytic approaches, we can derive the finer asymptotic expansions:
\begin{align*}
	\mathbb{E}(X_n)
	&= \frac{n}{\omega_n}-1+\frac{\omega_n}{2(\omega_n+1)^2}
	+\frac{\omega_n^2(2\omega_n^3 + 8\omega_n^2 + 11\omega_n + 20)}
	{24(\omega_n + 1)^5n}+O\Lpa{\frac1{n^2}},\\
	\mathbb{V}(X_n)
	&= \frac{n}{\omega_n(\omega_n+1)}
	-1 +\frac{\omega_n(\omega_n - 1)}
	{2(\omega_n + 1)^4} -\frac{\omega_n^2(2\omega_n^3
	+10\omega_n^2-27\omega_n+40)}
	{24(\omega_n+1)^7n}+O\Lpa{\frac{1}{(n\,\omega_n)^2}}.
\end{align*}
These can be derived by at least three different approaches. 
\begin{itemize}
\item \emph{Direct saddle-point method}: one begins with the generating functions 
\[
	\sum_{n\ge0}\frac{z^n}{n!}
	\sum_{0\le k\le n}\Stirling{n}{k}e^{ks}
    = e^{e^s(e^z-1)}
	= e^{e^z-1}\lpa{1+(e^z-1)s + \tfrac12 e^z(e^z-1)s^2
	+ \cdots},
\]
and then apply the saddle-point method to derive asymptotic expansions for each of the  coefficients $n![z^n](e^z-1) e^{e^z-1}$ and $n![z^n]e^z (e^z-1)e^{e^z-1}$, and then normalize by the asymptotic expansion \eqref{E:Bn} of the Bell numbers.

\item \emph{Bell numbers asymptotics}: alternatively, in such special cases, we have the simpler relations in terms of Bell numbers:
\[
	\left\{
	\begin{split}
		n![z^n](e^z-1)e^{e^z-1}
		&= B_{n+1}-B_n,\\
		n![z^n]e^z(e^z-1)e^{e^z-1}
		&= B_{n+2}-2B_{n+1},
	\end{split}\right.
\]
so that the mean and the variance satisfy 
\[
	\mathbb{E}(X_n)
	= \frac{B_{n +1}}{B_n}-1,
	\eqtext{and}
	\mathbb{V}(X_n)
	= \frac{B_{n +2}}{B_n}
	- \Lpa{\frac{B_{n +1}}{B_n}}^2-1,
\]
respectively. We next apply \eqref{E:Bn} with \(n\) replaced by \(n+\ell\). Writing \(\omega_m:=W(m)\) for the principal Lambert \(W\) function (so \(W(x) e^{W(x)}=x\)), we expand \(W(n+\ell)\) about \(n\) via Taylor’s theorem. Using the standard derivatives
\[
W'(x)=\frac{W(x)}{x(1+W(x))},\qquad
W''(x)=-\frac{W(x)^2\,(W(x)+2)}{x^2(1+W(x))^3},
\]
we obtain, for \(\ell=O(1)\),
\[
\omega_{n+\ell}
= \omega_n
+ \frac{\ell\,\omega_n}{(\omega_n+1)\,n}
-\frac{\ell^2\,\omega_n^2\,(\omega_n+2)}{2\,(\omega_n+1)^3\,n^2}
+ O\left(\frac{|\ell|^3}{n^3}\right).
\]
The ensuing substitutions into \eqref{E:Bn} are straightforward but algebraically lengthy; they can be carried out routinely (e.g.\ with symbolic computation). See also \cite{Canfield1995, Czabarka2011} for closely related expansions.

\item \emph{Bivariate asymptotics and Quasi-powers framework}: It is also possible to derive first a uniform asymptotic approximation by saddle-point method to the (Touchard) polynomials (see \cite{Elbert2001}):
\[
	\sum_{1\le k\le n}\Stirling{n}{k}v^k
	= n![z^n]e^{v(e^z-1)},
\]
when $v=e^s$ lies in a neighborhood of unity, and then expand locally the resulting approximation at $s=0$, similar to the calculations of moments under the Quasi-powers framework; see \cite{Canfield1975, Canfield1977, Hwang1994} for related ideas.
\end{itemize}

\section{Justification of the Bino-Bino expansion}
\label{S:App-B}

Define polynomials \(c_m(w,j)\) by
\begin{equation}\label{eq:1.1}
\frac{(1-jt)^{w/t}}{(1-t)^{jw/t}}
=
\sum_{m\ge 0} c_m(w,j)\,t^m .
\end{equation}
Equivalently, the left side equals \(\exp\bigl(-wj\,d_j(t)\bigr)\), where
\[
d_j(t):=\sum_{r\ge 1}\frac{j^r-1}{r+1}\,t^r
\qquad (j\ge 1,\ 0\le t<1/j).
\]
Since every monomial in \(c_m(w,j)\) has the form \(w^u j^v\) with \(0\le u\le m\) and \(0\le v\le m+u\le 2m\), we have
\(\deg_w c_m\le m\) and \(\deg_j c_m\le 2m\).

\begin{lemma}\label{lem:bb-inequality}
Fix \(N\ge 0\). For \(w\ge 1\), integer \(j\ge 0\), and \(0\le jt\le \tfrac{1}{2}\),
\begin{equation}\label{eq:1.6}
\left|
\frac{(1-jt)^{w/t}}{(1-t)^{jw/t}}
-
\sum_{0\le m \le N} c_m(w,j)\,t^m
\right|
\le C_N\,(j^2wt)^{N+1},
\end{equation}
where \(C_N\le 2^{N+1}e^{1/2}\).
\end{lemma}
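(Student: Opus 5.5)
We follow the same majorant-series approach as in \eqref{E:coeff-g} and Lemma~\ref{tau_series_estimate}. Write the left side of \eqref{eq:1.6} as $\exp(-z)$ with $z:=wj\,d_j(t)$. Since the remainder of the truncated expansion is a tail in $t$, we bound it in two regimes, according to whether $z$ is small or not.

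\textbf{Majorant for $z$.} For $j\ge1$ and $r\ge1$ we have $0\le j^r-1\le j^r$, and $\frac{j^r-1}{r+1}=0$ when $j=1$. Also $j^r-1=(j-1)(j^{r-1}+\cdots+1)\le r\,j^{r-1}(j-1)$. This gives
\[
0\le wj\,d_j(t)\le w\sum_{r\ge1}\frac{r}{r+1}\,j^{r+1}t^r\le wj^2t\sum_{r\ge0}(jt)^r\le 2wj^2t
\]
for $jt\le\frac12$. More usefully, $wj\,d_j(t)$ is dominated coefficientwise in $t$ by $wj^2t/(1-jt)$. Put $u:=j^2wt$. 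Since $w\ge1$ and $j\ge1$ imply $jt\le j^2wt=u$, the function $\exp(wj\,d_j(t))$ has coefficients dominated by those of
\[
\exp\Lpa{\frac{u}{1-jt}}\le \exp\Lpa{\frac{j^2wt}{1-j^2wt}}.
\]
The key coefficient bound is then $|c_m(w,j)|t^m\le 2^m u^m$, or something of this shape. To obtain it I would note that $\sum_m |c_m|t^m\le \exp\bigl(wj\,d_j(t)\bigr)$ evaluated termwise, and take Cauchy-type bounds on the radius $|t|=1/(2j^2w)$. There $wj\,|d_j|\le 2wj^2|t|/(2-\cdots)\le 1$ roughly, so $|c_m|\le e^{\cdot}(2j^2w)^m$.

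\textbf{Case $u\le\frac14$.} Here the tail $\sum_{m>N}|c_m|t^m\le e^{c}\sum_{m>N}(2u)^m\le 2e^{c}(2u)^{N+1}$. This is the constant $2^{N+1}e^{1/2}$ up to adjusting the radius. I would choose the radius so that the majorant exponent is at most $\frac12$, which produces the stated $e^{1/2}$.

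\textbf{Case $u>\frac14$.} Here the argument mimics Lemma~\ref{tau_series_estimate}. The exact function lies in $[0,1]$, because $(1-jt)\le(1-t)^j$ by Bernoulli's inequality, so the ratio is at most $1$. The partial sum is bounded by $\sum_{m\le N}e^{1/2}(2u)^m$. Then $1\le(4u)^{N+1}$ absorbs everything into $C_N u^{N+1}$, possibly with a slightly worse constant.

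\textbf{Main obstacle.} The hard step is the precise coefficient bound $|c_m(w,j)|\le e^{1/2}(2j^2w)^m$, uniformly in $w\ge1$ and $j$. The issue is that $d_j(t)$ has radius $1/j$ rather than $1/(j^2w)$, so the Cauchy circle must be chosen as $|t|=1/(2j^2w)\le 1/(2j)$, where $|wj\,d_j(t)|\le wj\cdot\frac{j|t|}{1-j|t|}\le 1/2\cdot\ldots$. Verifying $wj\,|d_j(t)|\le\frac12$ on that circle (using $j^r-1\le rj^{r-1}(j-1)$ and $w\ge1$) is the computation I expect to be delicate. It is also what pins down the constant $C_N\le 2^{N+1}e^{1/2}$. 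The two-case matching of constants then follows as in Lemma~\ref{tau_series_estimate}.
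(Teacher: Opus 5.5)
Your sketch does prove a bound $|F(t)-P_N(t)|\le C_N'(j^2wt)^{N+1}$, where $F$ is the ratio in \eqref{eq:1.6} and $P_N=\sum_{m\le N}c_m(w,j)t^m$. That $O$-form is all Appendix~\ref{S:App-B} uses. It does not give the stated $C_N\le 2^{N+1}e^{1/2}$, and the loss comes from your case split, not from the coefficient bound.

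With $|c_m(w,j)|\le e^{1/2}(2j^2w)^m$, put $u:=j^2wt$.
\begin{itemize}
\item In the regime $u\le\frac14$ you get the tail $e^{1/2}(2u)^{N+1}/(1-2u)$. This exceeds the target $e^{1/2}(2u)^{N+1}$ by the factor $1/(1-2u)$, which reaches $2$ at $u=\frac14$.
\item In the regime $u>\frac14$ your bound $|F|+|P_N|\le 1+e^{1/2}\sum_{m\le N}(2u)^m$ is at least $1$. Yet $2^{N+1}e^{1/2}u^{N+1}$ is close to $e^{1/2}2^{-(N+1)}<1$ for $u$ just above $\frac14$. Absorbing via $1\le(4u)^{N+1}$ leaves a constant of order $(N+1)4^{N+1}$, not $2^{N+1}e^{1/2}$.
\end{itemize}
The step you call the main obstacle is actually immediate. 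From $\frac1{r+1}\le\frac12$ you get $wj|d_j(t)|\le\frac{wj^2|t|}{2(1-j|t|)}=\frac{1}{4(1-1/(2jw))}\le\frac12$ on $|t|=\frac1{2j^2w}$.

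The missing idea is not to expand the composite function in $t$ all at once. Put $x:=wj\,d_j(t)$. For $0\le jt\le\frac12$ this is real, with $0\le x\le wj\cdot\frac{jt}{2(1-jt)}\le j^2wt$. The Lagrange remainder then gives $\bigl|e^{-x}-\sum_{s\le N}(-x)^s/s!\bigr|\le x^{N+1}/(N+1)!\le (j^2wt)^{N+1}/(N+1)!$, with no smallness assumption on $u$.

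Since $d_j(t)=O(t)$, $P_N$ is obtained from $\sum_{s\le N}\frac{(-wj)^s}{s!}d_j(t)^s$ by truncating each $d_j(t)^s$ at degree $N$. The coefficients of $d_j^s$ are dominated by those of $\bigl(\frac{jt}{2(1-jt)}\bigr)^s$. So the positive-coefficient tail estimate at radius $jt=\frac12$ bounds each truncation error by $2^{N+1-s}(jt)^{N+1}$.

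Weight these by $\frac{(wj)^s}{s!}$ and use $w^sj^{N+1+s}\le(j^2w)^{N+1}$ for $1\le s\le N$; this is the only place $w\ge1$ is needed. The result is at most $(e^{1/2}-1)2^{N+1}(j^2wt)^{N+1}$. The total constant is therefore $\frac1{(N+1)!}+(e^{1/2}-1)2^{N+1}\le 2^{N+1}e^{1/2}$.

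In this route the large parameter $wj$ enters only through a real Taylor remainder, and the inner truncation uses only $jt\le\frac12$. That is why the paper needs no case distinction and loses nothing in the constant.
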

\begin{proof}
The case \(j=0\) is trivial; assume \(j\ge 1\). Since \(j^r-1\le j^r\),
\[
0\le d_j(t)\le \sum_{r\ge 1}\frac{(jt)^r}{r+1}
\le \frac{jt}{2(1-jt)}\le jt.
\]
Setting \(x=wj\,d_j(t)\) in the standard Taylor remainder for \(e^{-x}\) gives
\begin{equation}\label{eq:1.8}
\left|
\frac{(1-jt)^{w/t}}{(1-t)^{jw/t}}
-
\sum_{0\le s\le N}\frac{(-1)^s}{s!}(wj)^s d_j(t)^s
\right|
\le
\frac{(j^2wt)^{N+1}}{(N+1)!}.
\end{equation}

Write \(d_j(t)^s=\sum_{m\ge s}b_{s,m}\,t^m\). The coefficients of \(d_j(t)\) are bounded by those of \(\frac{jt}{2(1-jt)}\), so the standard tail bound for a power series
\(\phi(x)=\sum_{\ell\ge0}\phi_\ell x^\ell\) with nonnegative coefficients converging at \(x=R\),
\[
\left|\phi(x)-\sum_{0\le \ell\le N}\phi_\ell x^\ell\right|
\le \left(\frac xR\right)^{N+1}\phi(R),
\]
applied to \(\phi(x)=x^s(1-x)^{-s}\) with \(x=jt\), \(R=\frac{1}{2}\), gives
\[
\left|
d_j(t)^s-\sum_{s\le m\le N}b_{s,m}\,t^m
\right|
\le 2^{N+1-s}(jt)^{N+1}.
\]
Substituting into \eqref{eq:1.8}, the additional truncation error is at most
\[
\sum_{1\le s\le N}\frac{(wj)^s}{s!}\,2^{N+1-s}(jt)^{N+1}
\le (e^{1/2}-1)2^{N+1}\,(j^2wt)^{N+1},
\]
where we used \(w^s j^{N+1+s}\le (j^2w)^{N+1}\) for \(1\le s\le N\). Combining with \eqref{eq:1.8} proves the lemma.
\end{proof}

For nonnegative integers \(x\le k/2\), the lemma gives (with \(t=\frac1k\) and \(w=\rho=\frac nk\))
\begin{equation}\label{eq:lem}
\frac{(1-\frac{x}{k})^n}{(1-\frac{1}{k})^{nx}}
=
\sum_{0\le m\le N}\frac{c_m(\rho,x)}{k^m}
+ O\left(\left(\frac{nx^2}{k^2}\right)^{\!N+1}\right).
\end{equation}

\begin{thm}
Assume
\begin{align}\label{E:kk3}
\frac{n}{\log n}\le k\le \frac{2n}{\log n+6},
\eqtext{or in terms of $\lambda$}
\frac{1}{\log n}\le \lambda\le \frac{2}{e^3}\cdot \frac{\sqrt n}{\log n+6},
\end{align}
where \(\lambda:=k e^{-n/k}\). Then, for every fixed \(N\ge 0\),
\begin{equation}\label{eq:main-binobino1}
S(n,k)
=
\sum_{0\le j\le k}(-1)^j\binom{k}{j}\left(1-\frac{1}{k}\right)^{nj}
\left(\sum_{0\le m\le N}\frac{c_m(\rho,j)}{k^m}\right)
+ R_N(n,k),
\end{equation}
with
\[
R_N(n,k)
= O\left(\left(1-\frac{\lambda}{k}\right)^k
\left(\frac{\lambda\log n}{\sqrt n}\right)^{2N+1}\right),
\]
where the implied constant depends only on \(N\).
\end{thm}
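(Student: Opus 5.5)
Since the defining sum satisfies $S(n,k)=\sum_j(-1)^j\binom kj(1-\frac1k)^{nj}\cdot\frac{(1-j/k)^n}{(1-1/k)^{nj}}$, the remainder is
\[
R_N(n,k)=\sum_{0\le j\le k}(-1)^j\binom kj\Lpa{1-\frac1k}^{nj}r_N(j),
\]
where $r_N(j)$ is the truncation error in \eqref{eq:1.1} with $t=\frac1k$ and $w=\rho$. Bounding this by the triangle inequality and Lemma~\ref{lem:bb-inequality} only gives $\sum_j\binom kj\lambda_b^jk^{-j}(nj^2/k^2)^{N+1}$. This is of order $e^{\lambda}(n\lambda^2/k^2)^{N+1}$, which loses the factor $(1-\lambda/k)^k\approx e^{-\lambda}$ and also loses half the power. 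So the cancellation must be kept. My plan is to mimic the proof of Theorem~\ref{T:finite-diff-exp}: write the sum as a $k$-th finite difference and apply Leibniz's rule \eqref{E:Leibniz}, now with the geometric factor $f(x)=(1-\frac1k)^{nx}$ in place of $e^{-nx/k}$.

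\textbf{Step 1 (exact Bino-Bino identity).} Set $h(x):=(1-\frac xk)^n(1-\frac1k)^{-nx}$, so that $S(n,k)=(-1)^k\nabla_x^k\bigl(f(x)h(x)\bigr)|_{x=k}$ by \eqref{E:stir-leib}. The finite differences of $f$ are explicit, exactly as in the proof of Proposition~\ref{P:st2-lbnz-prop}. This gives
\[
S(n,k)=\Lpa{1-\frac{\lambda_b}k}^k\sum_{0\le j\le k}\binom kj(-\Lambda_b)^j\nabla_x^jh(x)\big|_{x=j},
\qquad
\Lambda_b:=\frac{\lambda_b}{k-\lambda_b}.
\]
Apply the same identity to the polynomial truncation $h_N(x):=\sum_{m\le N}c_m(\rho,x)k^{-m}$ in place of $h$; this reproduces the main term of \eqref{eq:main-binobino1}. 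Hence
\[
R_N=\Lpa{1-\frac{\lambda_b}k}^k\sum_{j}\binom kj(-\Lambda_b)^j\,\nabla^j_x(h-h_N)\big|_{x=j}.
\]
Both $\Lambda_b\asymp\lambda/k$ and $(1-\lambda_b/k)^k\asymp(1-\lambda/k)^k$ hold in range \eqref{E:kk3}, since $\lambda_b=\lambda(1+O(n/k^2))$ there.

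\textbf{Step 2 (bounding the differences).} For $j<c\,k/\sqrt n$ I will use Lemma~\ref{finite_diff_deriv}, which bounds $|\nabla^j(h-h_N)|$ by $\max_{[0,j]}|(h-h_N)^{(j)}|$. Writing $h=\exp(-\rho x\,d_x(1/k))$ as a power series in $x$, with coefficients dominated as in \eqref{E:coeff-g}, the remainder $h-h_N$ collects the terms $\rho^u x^v k^{-m}$ with $m>N$. I will apply Cauchy estimates on a disc of radius $\asymp k/\sqrt n$, following the proof of Lemma~\ref{E:Dnkj-bd2}. For $j\le 2N+1$ this should give $|\nabla^j(h-h_N)|\ll(\sqrt n/k)^{2N+2}$, because $j$ differences kill polynomial degree $<j$. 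For larger $j$ the $j$-th derivative bound is $\ll j!(\sqrt n/k)^{j}$. For $j\ge ck/\sqrt n$ I will use the crude bound $|h|\le 1$ (valid for $x\le k$ up to harmless factors) together with $|h_N(x)|\ll(1+x^2\rho/k)^{N}$, giving $|\nabla^j(h-h_N)|\ll 2^j\,\mathrm{poly}(k)$.

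\textbf{Step 3 (summation).} Since $k\Lambda_b\sqrt n/k\asymp\lambda\sqrt n/k\le\lambda\log n/\sqrt n<\frac12 c'$ in range \eqref{E:kk3}, the terms with $j$ small are $O\bigl((n/k^2)^{N+1}\lambda^{\,j}\bigr)$. The terms with $j\ge 2N+1$ are bounded geometrically by $(\lambda\sqrt n/k)^{j}$, and the tail $j\ge ck/\sqrt n$ is exponentially small, exactly as for $S_2$ in the proof of Theorem~\ref{T:finite-diff-exp}. Both contributions are $O\bigl((\lambda\log n/\sqrt n)^{2N+1}\bigr)$, using $n/k^2\le(\log n)^2/n$ and $\lambda\ge1/\log n$, which absorbs the low powers of $\lambda$. \textbf{Main obstacle:} the precise derivative estimate for $h-h_N$ in Step 2. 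One needs the truncation remainder to carry the factor $(\sqrt n/k)^{\max(j,2N+2)}$ uniformly in $j$, which requires a careful majorant for the two-variable series in $(\rho x^2/k,\,x/k)$ rather than the one-variable Lemma~\ref{lem:bb-inequality}.
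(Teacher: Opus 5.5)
Your proposal is correct, but it splits the summand differently from the paper. You put the geometric factor $(1-\frac1k)^{nx}$ directly into Leibniz's rule. This yields a Bino--Bino analogue of \eqref{E:st2-lbnz} with $\lambda_b,\Lambda_b$ and $h(x)=(1-\frac xk)^n(1-\frac1k)^{-nx}$. Since $h_N$ is a polynomial of degree at most $2N$, the same identity applied to $h_N$ returns the main term of \eqref{eq:main-binobino1} exactly, so only $\nabla_{\!x}^j(h-h_N)$ has to be estimated.

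The paper instead keeps the exponential split $f(x)=e^{-nx/k}$ and quotes Theorem~\ref{T:finite-diff-exp} to truncate at $j\le 2N$. It then writes $g(x)=g(1)^x h(x)$ and uses Lemma~\ref{lem:bb-inequality} at the integers $x\le 2N$ to replace $g$ by $g(1)^xG_N(x)$. Finally it re-collapses the sum via $(\lambda/k)^xg(1)^x=(1-\frac1k)^{nx}$. Because $g(1)^xG_N(x)$ is not a polynomial, the paper must pay for an extra tail $E_N^{[2]}$ over $2N+1\le j\le k$, which it controls through $1-g(1)=O(n/k^2)$.

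So the paper reuses Theorem~\ref{T:finite-diff-exp} and needs no new derivative estimates. Your route has no re-extension tail and gives a clean exact identity, at the cost of redoing that theorem's estimates for $h$ in place of $g$.

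The ``main obstacle'' you flag does not actually arise, and no two-variable majorant is needed.
\begin{itemize}
\item For $j\le 2N$, the difference $\nabla_{\!x}^j(h-h_N)$ at $x=j$ involves only the values of $h-h_N$ at the integers $0,\dots,j\le 2N$. Lemma~\ref{lem:bb-inequality} then gives $O(2^j(n/k^2)^{N+1})$ directly, exactly as in the paper.
\item For $j>2N$ one has $\nabla_{\!x}^jh_N\equiv0$, so only $h$ matters. Write $h(x)=e^{\alpha x}g(x)$ with $\alpha=-\log g(1)=O(n/k^2)$. The coefficient majorant of Lemma~\ref{E:Dnkj-bd2} then survives up to a factor $e^{\alpha k/\sqrt n}=1+o(1)$.
\item Bernoulli's inequality gives $0\le h(l)\le 1$ at the integers $0\le l\le k$. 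Hence $|\nabla_{\!x}^jh(x)|\le 2^j$ at $x=j$, with no $\mathrm{poly}(k)$ factor.
\item The $j=2N+1$ term is bounded by $(\lambda\sqrt n/k)^{2N+1}$, which already meets the target. So the sharper bound $(\sqrt n/k)^{2N+2}$ you aimed for there is unnecessary.
\item Since $\lambda_b=\lambda e^{-\alpha}\le\lambda$, we get $\Lambda_b\le\Lambda$, and the geometric-series argument of Theorem~\ref{T:finite-diff-exp} applies verbatim.
\item Also $\lambda-\lambda_b=O(\lambda n/k^2)=O(\log n/\sqrt n)$, which gives $(1-\lambda_b/k)^k\asymp(1-\lambda/k)^k$.
\end{itemize}
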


\begin{proof}
Recall \(g(x):=e^{nx/k}(1-x/k)^n\) (see \eqref{E:fg}). By Theorem~\ref{T:finite-diff-exp},
\begin{equation}\label{eq:trunc}
S(n,k)
=
(-1)^k\sum_{0\le j\le 2N}\binom{k}{j}
\Bigl(\nabla_x^{k-j}\Lpa{\frac{\lambda}k}^x\Bigr)\Big|_{x=k}\,
\bigl(\nabla_x^j g(x)\bigr)\big|_{x=j}
+ O\Lpa{\Lpa{1-\frac{\lambda}{k}}^k\Lpa{\frac{\lambda\log n}{\sqrt n}}^{2N+1}}.
\end{equation}
Set \(G_N(x):=\sum_{0\le m\le N}c_m(\rho,x)\,k^{-m}\). By \eqref{eq:lem}, for integers \(0\le x\le 2N\),
\[
g(x)=g(1)^x\,G_N(x)+E_N(x),
\eqtext{and}
E_N(x)=g(1)^x\,O\bigl((nx^2/k^2)^{N+1}\bigr).
\]
Since \(0\le g(1)\le 1\), \(x\le 2N\), and
\(\nabla_x^{k-j}u^x\big|_{x=k}=(-1)^{k-j}(1-u)^k\bigl(\frac{u}{1-u}\bigr)^j\), the contribution of \(E_N\) to \eqref{eq:trunc} is
\(O\bigl((1-\frac{\lambda}k)^k(n/k^2)^{N+1}(1+\lambda)^{2N}\bigr)\).
Under \eqref{E:kk3}, we have \(n/k^2\asymp (\log n)^2/n\) and
\(1/\log n\le \lambda=O(\sqrt n/\log n)\). Hence
\[
\left(\frac{n}{k^2}\right)^{N+1}(1+\lambda)^{2N}
= O\Lpa{\Lpa{\frac{\lambda\log n}{\sqrt n}}^{2N+1}},
\]
so this contribution is absorbed into the remainder in \eqref{eq:trunc}.

\medskip
\noindent\textbf{Main term.}
Write \(\Sigma_{2N}\) for the sum in \eqref{eq:trunc} with \(g(x)\) replaced by \(g(1)^xG_N(x)\), and decompose \(\Sigma_{2N}=\Sigma_{\le k}-E_N^{[2]}\), where \(\Sigma_{\le k}\) extends the sum to \(j\le k\) and \(E_N^{[2]}\) is the tail \(2N+1\le j\le k\). By Leibniz's formula \eqref{E:Leibniz},
\[
\Sigma_{\le k}
=(-1)^k\nabla_x^k\bigl(\lpa{\tfrac{\lambda}k}^xg(1)^xG_N(x)\bigr)\big|_{x=k}
=(-1)^k\nabla_x^k\bigl(\bigl(1-\tfrac{1}{k}\bigr)^{nx}G_N(x)\bigr)\big|_{x=k},
\]
which expands to the right side of \eqref{eq:main-binobino1}.

\medskip
\noindent\textbf{Tail bound.}
Every monomial in \(c_m(\rho,x)k^{-m}\) has the form \(n^ux^vk^{-m-u}\) with \(v\le m+u\) and \(u\le m\), so \(\nabla_x^s G_N(x)\equiv 0\) for \(s>2N\) and, for \(0\le s\le 2N\),
\[
\nabla_x^sG_N(x)\big|_{x=s}
= O\Bigl(\sum_{\lceil s/2\rceil\le m\le N}\Lpa{\frac n{k^2}}^m\Bigr)
= O\bigl(n^{-s/2}(\log n)^{2N}\bigr).
\] 
Also,
\[
\bigl|\nabla_x^r g(1)^x\bigr|\big|_{x=j}
= g(1)^{\,j-r}|1-g(1)|^r
= O\lpa{n^{-r}(\log n)^{2r}},
\]
since \(1-g(1)=O(n/k^2)=O((\log n)^2/n)\).
By Leibniz's rule,
\[
\bigl|\nabla_x^j\bigl(g(1)^xG_N(x)\bigr)\bigr|\big|_{x=j}
= O\bigl((\log n)^{2N}\,(4/\sqrt{n})^{\,j}\bigr)
\qquad(j\ge 2N+1).
\]
Since \(\frac{k\cdot\frac{\lambda}k}{1-\frac{\lambda}k}\asymp\lambda\), substituting into the bound for \(E_N^{[2]}\) gives
\[
|E_N^{[2]}|
= O\left(\Lpa{1-\frac\lambda k}^k(\log n)^{2N}\sum_{j\ge 2N+1}\frac{1}{j!}\Bigl(\frac{4\lambda}{\sqrt n}\Bigr)^{\!j}\right)
= O\left(\left(1-\frac{\lambda}{k}\right)^k\left(\frac{\lambda\log n}{\sqrt n}\right)^{2N+1}\right),
\]
completing the proof.
\end{proof}

Since \(c_m^{[\mathrm{bb}]}(j)=c_m(\rho,j)\), the theorem is precisely the Bino-Bino expansion \eqref{E:bb} in the range \eqref{E:kk3}.

\section{Bino--Exp case (outline)}
\label{S:App-C}

\begin{lemma}[Bino--Exp truncation bound]\label{lem:be-analogue}
Fix \(N\ge 0\). For \(w\ge 1\), integer \(x\ge 0\), and \(0\le xt\le \frac{1}{2}\),
\begin{equation}\label{eq:be-tail}
\left|
(1-xt)^{w/t}e^{xw}
-
\sum_{0\le m\le N} c_m^{[\mathrm{be}]}(x,w)\,t^m
\right|
\le C_N\,(x^2wt)^{N+1},
\end{equation}
where \(C_N\le 2^{N+1}e^{1/2}\).
\end{lemma}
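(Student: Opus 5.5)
The plan is to copy the proof of Lemma~\ref{lem:bb-inequality}, replacing the exponent function $d_j$ by its Bino--Exp analogue. For $0\le xt<1$ we have
\[
	(1-xt)^{w/t}e^{xw}=\exp\Lpa{-\frac wt\sum_{r\ge2}\frac{(xt)^r}{r}}
	=\exp\lpa{-wx\,e_x(t)},
\]
where
\[
	e_x(t):=\sum_{r\ge1}\frac{x^{r}t^{r}}{r+1}.
\]
This $e_x(t)$ has nonnegative coefficients, which are dominated termwise by those of $\frac{xt}{2(1-xt)}$. Hence for $0\le xt\le\frac12$ we get $0\le e_x(t)\le xt$, so the argument $y:=wx\,e_x(t)$ satisfies $0\le y\le x^2wt$. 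The case $x=0$ is trivial, since both sides reduce to $1$; so assume $x\ge1$. Expanding $c_m^{[\mathrm{be}]}$ from $\sum_{s}\frac{(-1)^s}{s!}(wx)^s e_x(t)^s$ shows that every monomial is of the form $w^u x^v t^m$ with $u\le m$ and $v\le m+u$. This is the same degree structure as before.

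\textbf{Step 1 (exponential truncation).} Apply the Taylor remainder $\bigl|e^{-y}-\sum_{s\le N}(-y)^s/s!\bigr|\le y^{N+1}/(N+1)!$, valid for $y\ge0$. This bounds the difference between the function and $\sum_{0\le s\le N}\frac{(-wx)^s}{s!}e_x(t)^s$ by $(x^2wt)^{N+1}/(N+1)!$.

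\textbf{Step 2 (truncating each power).} Write $e_x(t)^s=\sum_{m\ge s}b_{s,m}t^m$. Because the coefficients are dominated by those of $\phi(z)=z^s(1-z)^{-s}$ with $z=xt$, the tail estimate for power series with nonnegative coefficients, taken at radius $R=\frac12$, gives
\[
	\Bigl|e_x(t)^s-\sum_{s\le m\le N}b_{s,m}t^m\Bigr|\le 2^{N+1-s}(xt)^{N+1}.
\]
Multiply by $\frac{(wx)^s}{s!}$ and sum over $1\le s\le N$. Using $w^s x^{N+1+s}\le (x^2w)^{N+1}$, which holds because $w\ge1$, $x\ge1$ and $s\le N+1$, the total is at most $(e^{1/2}-1)2^{N+1}(x^2wt)^{N+1}$.

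\textbf{Step 3 (identification).} The truncated double sum $\sum_{s\le N}\frac{(-wx)^s}{s!}\sum_{m\le N}b_{s,m}t^m$ is exactly $\sum_{m\le N}c_m^{[\mathrm{be}]}(x,w)t^m$. This holds because the terms with $s>N$ only contribute to $t^m$ with $m>N$ (recall $b_{s,m}=0$ for $m<s$). Adding the bounds from Steps 1 and 2 gives the constant $\frac1{(N+1)!}+(e^{1/2}-1)2^{N+1}\le 2^{N+1}e^{1/2}$.

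The only point needing care is Step 3, namely checking that the two truncations line up so that no coefficient with $m\le N$ is lost. Everything else transfers verbatim from Lemma~\ref{lem:bb-inequality}. The Bino--Exp case is in fact slightly easier, because $e_x(t)$ has exactly the coefficients $x^r/(r+1)$ rather than $(x^r-1)/(r+1)$.
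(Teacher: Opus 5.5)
Your proof is correct and is essentially the paper's argument. The paper writes the function as $\exp\bigl(-w\,d_x^{[\mathrm{be}]}(t)\bigr)$ with $d_x^{[\mathrm{be}]}(t)=\sum_{r\ge1}\frac{x^{r+1}}{r+1}t^r=x\,e_x(t)$, bounds this exponent by $x^2t$, applies the exponential Taylor remainder, and then truncates each power using the positive-coefficient tail estimate at $R=\frac12$, exactly as in your Steps 1--3. One small precision: the factor $2^{N+1-s}$, which is what gives $e^{1/2}-1$ rather than $e-1$ in the constant, comes from dominating $e_x(t)^s$ by $2^{-s}\bigl(\frac{xt}{1-xt}\bigr)^s$, i.e.\ by $2^{-s}\phi(xt)$, not by $\phi(xt)$ itself as you wrote.
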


\begin{proof}
Write
\[
(1-xt)^{w/t}e^{xw}
=
\exp\bigl(-w\,d_x^{[\mathrm{be}]}(t)\bigr),
\eqtext{where}
d_x^{[\mathrm{be}]}(t):=\sum_{r\ge 1}\frac{x^{r+1}}{r+1}\,t^r.
\]
If \(xt\le \frac{1}{2}\), then
\[
0\le d_x^{[\mathrm{be}]}(t)
\le \frac{1}{2}\sum_{r\ge 1}x^{r+1}t^r
=
\frac{x^2t}{2(1-xt)}
\le x^2t.
\]
Thus Taylor's theorem gives
\[
\left|
e^{-w d_x^{[\mathrm{be}]}(t)}
-
\sum_{0\le r\le N}\frac{(-1)^r}{r!}w^r\bigl(d_x^{[\mathrm{be}]}(t)\bigr)^r
\right|
\le
\frac{(x^2wt)^{N+1}}{(N+1)!}.
\]
Also by the coefficient-wise bound
\[
d_x^{[\mathrm{be}]}(t)\preceq \frac{x^2t}{2(1-xt)},
\]
we see that the same positive-coefficient tail estimate used in Lemma~\ref{lem:bb-inequality}, with \(R=\frac{1}{2}\), shows that truncating each power \(\bigl(d_x^{[\mathrm{be}]}(t)\bigr)^r\) at degree \(N\) contributes at most
\[
2^{N+1}(e^{1/2}-1)\,(x^2wt)^{N+1}.
\]
Combining the two bounds proves \eqref{eq:be-tail}.
\end{proof}

\begin{thm}[Bino--Exp analogue of Theorem~1.2]\label{thm:be-thm12}
Assume \eqref{E:kk3}. Then, for every fixed \(N\ge 0\),
\begin{equation}\label{eq:be-main}
S(n,k)
=
\sum_{0\le j\le k}(-1)^j\binom{k}{j}\Lpa{\frac{\lambda}k}^j
\left(
\sum_{0\le m\le N}\frac{c_m^{[\mathrm{be}]}(j,\rho)}{k^m}
\right)
+
R_N^{[\mathrm{be}]}(n,k),
\end{equation}
where
\begin{equation}\label{eq:be-rem}
R_N^{[\mathrm{be}]}(n,k)
=
O_N\left(
\Lpa{1-\frac{\lambda}{k}}^k
\left(\frac{\lambda\log n}{\sqrt n}\right)^{2N+1}
\right).
\end{equation}
\end{thm}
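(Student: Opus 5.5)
We mirror the proof of the Bino--Bino theorem in Appendix~\ref{S:App-B}, with the role of \(g(1)^x\) now played by the trivial factor \(1\); in fact the Bino--Exp case is simpler. The starting point is Theorem~\ref{T:finite-diff-exp} with \(s=2N+1\). Together with Proposition~\ref{P:st2-lbnz-prop}, it gives
\[
S(n,k)=\Lpa{1-\frac{\lambda}{k}}^k\sum_{0\le j\le 2N}\binom{k}{j}(-\Lambda)^j D_{n,k}(j)
+O\Lpa{\Lpa{1-\frac{\lambda}{k}}^k\Lpa{\frac{\lambda\log n}{\sqrt n}}^{2N+1}},
\]
where \(D_{n,k}(j)=\nabla_x^j g(x)|_{x=j}\) and \(g(x)=e^{nx/k}(1-x/k)^n\). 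With \(t=1/k\) and \(w=\rho\), we have \(g(x)=(1-xt)^{w/t}e^{xw}\).

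\textbf{Step 1 (approximating \(g\)).} Set \(G_N(x):=\sum_{0\le m\le N}c_m^{[\mathrm{be}]}(x,\rho)k^{-m}\). For integers \(0\le x\le 2N\), the condition \(xt\le\frac12\) holds for large \(n\). Lemma~\ref{lem:be-analogue} then gives
\[
g(x)=G_N(x)+E_N(x),\qquad E_N(x)=O\lpa{(n/k^2)^{N+1}}.
\]
Since \(\nabla_x^j\) at \(x=j\) involves only the values \(g(0),\dots,g(j)\) with combinatorial weights \(2^j\le 4^N\), replacing \(D_{n,k}(j)\) by \(\nabla_x^jG_N(x)|_{x=j}\) costs
\[
O\lpa{(n/k^2)^{N+1}\textstyle\sum_{j\le 2N}(k\Lambda)^j}=O\lpa{(n/k^2)^{N+1}(1+\lambda)^{2N}}.
\]
Under \eqref{E:kk3} we have \(n/k^2\asymp(\log n)^2/n\) and \(\lambda\ge 1/\log n\). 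This quantity is therefore \(O((\lambda\log n/\sqrt n)^{2N+1})\): for \(\lambda\ge1\) we compare \(\lambda^{2N}(\log n)^{2N+2}n^{-N-1}\) with \(\lambda^{2N+1}(\log n)^{2N+1}n^{-N-1/2}\), and for \(\lambda<1\) the lower bound \(\lambda\ge1/\log n\) handles the mismatch.

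\textbf{Step 2 (re-extending the sum).} Let \(\Sigma_{\le k}\) denote the full sum \(\sum_{0\le j\le k}\binom{k}{j}(-\Lambda)^j\nabla_x^jG_N(x)|_{x=j}\). By the Leibniz formula \eqref{E:Leibniz} applied to \(f(x)=e^{-nx/k}\) and \(G_N\), read backwards as in the proof of Proposition~\ref{P:st2-lbnz-prop},
\[
\Lpa{1-\frac{\lambda}{k}}^k\Sigma_{\le k}=(-1)^k\nabla_x^k\lpa{e^{-nx/k}G_N(x)}\big|_{x=k}.
\]
By \eqref{E:finite-diff-k}, this equals \(\sum_j(-1)^j\binom{k}{j}(\lambda/k)^jG_N(j)\), which is exactly the main term of \eqref{eq:be-main}.

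\textbf{Step 3 (tail).} It remains to bound the terms \(2N+1\le j\le k\) of \(\Sigma_{\le k}\). Each \(c_m^{[\mathrm{be}]}(x,\rho)\) is a polynomial in \(x\) of degree at most \(2m\le 2N\). Hence \(\nabla_x^jG_N\equiv0\) for \(j>2N\), and the tail vanishes identically. This makes the case genuinely simpler than the Bino--Bino one, where a factor \(g(1)^x\) forced a nontrivial tail estimate.

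\textbf{Main obstacle.} The main obstacle is the bookkeeping in Step 1. We must check that Lemma~\ref{lem:be-analogue} applies with \(w=\rho\ge1\), which holds in this range. We must also check that the error \((n/k^2)^{N+1}(1+\lambda)^{2N}\) is dominated by \((\lambda\log n/\sqrt n)^{2N+1}\) uniformly across the whole range \eqref{E:kk3}, particularly at the small-\(\lambda\) end \(\lambda\approx1/\log n\). If the comparison fails there by a power of \(\log n\), the fallback is to take \(s=2N+2\) in Theorem~\ref{T:finite-diff-exp} and truncate \(g\) at order \(N+1\).
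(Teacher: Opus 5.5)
Your proposal is correct and follows the paper's proof essentially step for step. You truncate via Theorem~\ref{T:finite-diff-exp} with $s=2N+1$, replace $g$ by the polynomial $G_N$ of degree at most $2N$ using Lemma~\ref{lem:be-analogue} at cost $(n/k^2)^{N+1}(1+\lambda)^{2N}$, and recover the full main term exactly from Leibniz's formula together with $\nabla_{\!x}^jG_N\equiv 0$ for $j>2N$. Your worry about the small-$\lambda$ end is unfounded: for $1/\log n\le\lambda<1$ the ratio of the two error sizes is $O\lpa{(\log n)^{2N+2}n^{-1/2}}$, so no fallback is needed.
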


\begin{proof}
Start from the finite-difference truncation \eqref{eq:trunc}, always with \(g(x):=e^{\rho x}\left(1-\frac{x}{k}\right)^n\). For \(0\le x\le 2N\) and \(n\) large, \(\frac xk\le \frac{1}{2}\), so Lemma~\ref{lem:be-analogue} with \(t=\frac{1}{k}\) and \(w=\rho\) yields
\[
g(x)=G_N^{[\mathrm{be}]}(x)+E(x),
\]
where
\[
G_N^{[\mathrm{be}]}(x):=\sum_{0\le m\le N}\frac{c_m^{[\mathrm{be}]}(x,\rho)}{k^m},
\qquad
E(x)=O\left(\left(\frac{x^2n}{k^2}\right)^{N+1}\right).
\]
Hence
\[
\bigl|\nabla_x^jE(x)\bigr|_{x=j}
\le
2^j\max_{0\le u\le j}|E(u)|
=O\left(
\left(\frac{n}{k^2}\right)^{N+1}\right)
\qquad(0\le j\le 2N),
\]
so replacing \(g\) by \(G_N^{[\mathrm{be}]}\) in \eqref{eq:trunc} contributes at most
\[
\Lpa{1-\frac{\lambda}{k}}^k\left(\frac{n}{k^2}\right)^{N+1}(1+\lambda)^{2N}.
\]
Under \eqref{E:kk3}, this is
\[
O_N\left(
\Lpa{1-\frac{\lambda}{k}}^k
\left(\frac{\lambda\log n}{\sqrt n}\right)^{2N+1}
\right).
\]

Now \(G_N^{[\mathrm{be}]}\) is a polynomial in \(x\) of degree at most \(2N\), so
\[
\nabla_x^jG_N^{[\mathrm{be}]}\equiv 0
\qquad (j>2N).
\]
Thus the truncated sum extends exactly from \(0\le j\le 2N\) to \(0\le j\le k\), and Leibniz's formula gives
\[
(-1)^k\nabla_x^k\Bigl(\Lpa{\frac{\lambda}k}^xG_N^{[\mathrm{be}]}(x)\Bigr)\Big|_{x=k}
=
\sum_{0\le j\le k}(-1)^j\binom{k}{j}\Lpa{\frac{\lambda}k}^jG_N^{[\mathrm{be}]}(j),
\]
which is precisely \eqref{eq:be-main}. In contrast with the Bino--Bino case, there is no large-\(j\) tail, because the approximant is already a polynomial.
\end{proof}

\section{Justification of the Exp--Exp case (outline)}
\label{S:App-D}

We compare the Exp--Exp truncation with the Charlier--Poisson truncation.

\medskip
\noindent\textbf{Auxiliary polynomials.}
Define polynomials \(a_m(j)\) by
\begin{equation}\label{eq:def-a-polynomials}
\prod_{0\le l<j}(1-lt)=\sum_{m\ge 0}a_m(j)\,t^m,
\qquad j\in \mathbb{Z}_{\ge 0}.
\end{equation}
Then \(a_m(x)\) is a polynomial in \(x\) of degree \(2m\), with \(a_0\equiv 1\) and \(a_m(0)=\cdots=a_m(m)=0\) for \(m\ge 1\). Since (with $t=\frac{1}{k}$)
\[
\binom{k}{j}
=
\frac{k^j}{j!}\prod_{0\le l<j}(1-lt),
\]
we have
\begin{equation}\label{eq:binom-am}
\binom{k}{j}\Lpa{\frac{\lambda}k}^j
=
\frac{\lambda^j}{j!}\sum_{m\ge 0}a_m(j)\,t^m,
\end{equation}
and this identity remains valid for \(j>k\), because the product in \eqref{eq:def-a-polynomials} then vanishes.

\medskip
\noindent\textbf{The Exp--Exp coefficients.}
Recall the expansion \eqref{E:Charlier}. Then by the definition of $\lambda$
\[
\binom{k}{j}\left(1-\frac{j}{k}\right)^n
=
\binom{k}{j}\Lpa{\frac{\lambda}k}^j\cdot \Bigl(e^{j/k}\Lpa{1-\frac jk}\Bigr)^n
=
\frac{\lambda^j}{j!}
\sum_{m,l\ge 0}
a_m(j)\,\frac{\tau_l(n)}{l!}\cdot\frac{j^l}{k^{m+l}}.
\]
Collecting powers of \(k^{-1}\), we obtain
\begin{equation}\label{eq:def-cee-via-a-tau}
\binom{k}{j}\Lpa{1-\frac{j}{k}}^n
=
\frac{\lambda^j}{j!}\sum_{s\ge 0}\frac{c_s^{[\mathrm{ee}]}(n,j)}{k^s},
\eqtext{where}
c_s^{[\mathrm{ee}]}(n,j)
=
\sum_{\substack{m+l=s\\ m,l\ge 0}}
a_m(j)\,\frac{\tau_l(n)}{l!}\,j^l.
\end{equation}

The truncated Exp--Exp main term is
\[
S_{ee}^{(N)}(n,k)
:=
\sum_{0\le s\le N}\frac{1}{k^s}
\sum_{j\ge 0}(-1)^j\frac{\lambda^j}{j!}\,c_s^{[\mathrm{ee}]}(n,j).
\]

\medskip
\noindent\textbf{Comparison with Poisson--Charlier expansion.}
For \(m\ge 0\), define $h_m(y) := [t^m](1-yt)^{1/t}$. For odd \(N\), Theorem~\ref{Poi_Char_expansion} gives
\begin{equation}\label{eq:s-cp-restatement}
S(n,k)=S_{CP}^{(N)}(n,k)
+O\left(
\Lpa{1-\frac{\lambda}k}^k
\left(\frac{\lambda\log n}{\sqrt n}\right)^{N+1}
\right),
\end{equation}
where
\begin{equation}\label{eq:s_cp}
S_{CP}^{(N)}(n,k)
=
\sum_{0\le l\le N}\frac{\tau_l(n)}{l!}
\sum_{0\le j\le k}(-1)^j\binom{k}{j}\Lpa{\frac{\lambda}k}^j\left(\frac{j}{k}\right)^l.
\end{equation}

\begin{prop}\label{prop:cp-ee-identity}
For every integer \(N\ge 0\),
\begin{equation}\label{eq:cp-ee}
S_{CP}^{(N)}(n,k)-S_{ee}^{(N)}(n,k)
=
\sum_{0\le l\le N}\frac{\tau_l(n)}{l!}\,(-1)^l
\frac{d^l}{dn^l}
f_{N-l}\left(\lambda,\frac{1}{k}\right),
\end{equation}
where $f_M(y,t):=(1-yt)^{1/t}-\sum_{0\le m\le M}h_m(y)t^m$.
\end{prop}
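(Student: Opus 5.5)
The plan is to compare the two truncations term by term in the index $l$ of $\tau_l(n)/l!$. After regrouping, both sides reduce to the same generating function $(1-\lambda t)^{1/t}$ with $t=\frac1k$, differentiated $l$ times with respect to $\log\lambda$. The whole argument rests on two generating-function identities.

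\emph{First identity (Poisson--Charlier side).} For each fixed $l$,
\[
	\sum_{0\le j\le k}(-1)^j\binom{k}{j}\Lpa{\frac{\lambda}k}^j j^l
	= \Lpa{\lambda\frac{\partial}{\partial\lambda}}^{l}\Lpa{1-\frac{\lambda}k}^k .
\]
This holds because $\lambda\partial_\lambda$ multiplies the $j$th monomial $\lambda^j$ by $j$. Hence the $l$th summand of $S_{CP}^{(N)}(n,k)$ in \eqref{eq:s_cp} equals
\[
	\frac{\tau_l(n)}{l!}\,k^{-l}\,(\lambda\partial_\lambda)^l\,(1-\lambda t)^{1/t}
	\qquad\text{evaluated at } t=\tfrac1k .
\]

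\emph{Second identity (Exp--Exp side).} Consider
\[
	F(\lambda,t):=\sum_{j\ge0}\frac{(-\lambda)^j}{j!}\prod_{0\le l<j}(1-lt)
	=\sum_{j\ge0}\binom{1/t}{j}(-\lambda t)^j .
\]
By the binomial series this equals $(1-\lambda t)^{1/t}$ for $|\lambda t|<1$, and it is analytic in $t$ near $0$. The coefficientwise bound $|a_m(j)|\le[t^m]\prod_{l<j}(1+lt)$ gives absolute convergence of the double series $\sum_{j,m}\frac{\lambda^j}{j!}|a_m(j)||t|^m$ for small $|t|$. So one may interchange the sums and extract coefficients, which yields
\[
	\sum_{j\ge0}\frac{(-\lambda)^j}{j!}a_m(j)=h_m(\lambda).
\]
Applying $(\lambda\partial_\lambda)^l$ (termwise, legitimate since the series in $j$ is entire in $\lambda$) then gives
\[
	\sum_{j\ge0}\frac{(-\lambda)^j}{j!}a_m(j)\,j^l=(\lambda\partial_\lambda)^l h_m(\lambda).
\]
Substituting this into \eqref{eq:def-cee-via-a-tau} and regrouping $S_{ee}^{(N)}$ by $l$ (with $m=s-l\le N-l$), the $l$th group becomes
\[
	\frac{\tau_l(n)}{l!}\,k^{-l}\sum_{0\le m\le N-l}t^m\,(\lambda\partial_\lambda)^l h_m(\lambda).
\]

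\emph{Combining and converting to $n$-derivatives.} Subtracting the two expressions for the $l$th group gives
\[
	\frac{\tau_l(n)}{l!}\,k^{-l}\,(\lambda\partial_\lambda)^l f_{N-l}(\lambda,t),
\]
where the derivative acts only on the first argument and $t=\frac1k$ is held fixed. Since $\lambda=ke^{-n/k}$, the chain rule gives
\[
	\frac{d}{dn}=\frac{d\lambda}{dn}\,\partial_\lambda=-\frac1k\,\lambda\partial_\lambda ,
\]
and therefore $k^{-l}(\lambda\partial_\lambda)^l=(-1)^l\frac{d^l}{dn^l}$. Summing over $0\le l\le N$ gives \eqref{eq:cp-ee}.

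\emph{Main obstacle.} The only delicate point is the analytic justification in the second identity. The Exp--Exp sums run over all $j\ge0$ rather than $j\le k$, so the identification with $h_m(\lambda)$ must go through the analytic function $F(\lambda,t)$ in $t$, not through the value $t=\frac1k$ alone. This relies on the observation after \eqref{eq:binom-am} that the product $\prod_{0\le l<j}(1-lt)$ vanishes for $j>k$ when $t=\frac1k$, and on the absolute-convergence bound used above to interchange the $j$- and $m$-sums. Everything else is bookkeeping.
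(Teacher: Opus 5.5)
Your proof is correct. Its skeleton matches the paper's: group by $l$, identify $\sum_{j\ge0}(-1)^j\frac{\lambda^j}{j!}a_m(j)$ with $h_m(\lambda)$, and convert the weight $(j/k)^l$ into $(-1)^l\frac{d^l}{dn^l}$ acting through $\lambda$.

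The difference is on the Poisson--Charlier side. The paper substitutes \eqref{eq:binom-am} into \eqref{eq:s_cp} and extends the $j$-sum to all $j\ge0$. It then writes $S_{CP}^{(N)}$ as a full double series over $m\ge0$ and $j\ge0$, cancels the terms $m\le N-l$ against $S_{ee}^{(N)}$, and resums the infinite tail $m>N-l$ into $f_{N-l}(\lambda,1/k)$. That route tacitly needs three things:
\begin{itemize}
\item an interchange of infinite double sums at $t=1/k$;
\item termwise $n$-differentiation of the $m$-series;
\item convergence of $\sum_{m\ge0}h_m(\lambda)k^{-m}$ to $(1-\lambda/k)^k$.
\end{itemize}
All three hold because $\lambda/k=e^{-n/k}<1$, but the paper does not address them.

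You instead evaluate the $l$th Poisson--Charlier summand in closed form by the finite binomial theorem, as $k^{-l}(\lambda\partial_\lambda)^l(1-\lambda/k)^k$. As a result, only the finitely many coefficients with $m\le N-l$ ever appear, and the infinite $m$-tail at $t=1/k$ never arises. You also prove the identity $h_m(y)=\sum_{j\ge0}(-1)^j\frac{y^j}{j!}a_m(j)$, which the paper only asserts, via the binomial series near $t=0$ and absolute convergence. Your route is thus slightly more economical and fully rigorous at no extra cost.

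One small correction to your closing remark: your argument never uses the vanishing of $\prod_{0\le l<j}(1-lt)$ for $j>k$. That fact is needed only in the paper's route, to extend the Poisson--Charlier sum to all $j\ge0$ before expanding.
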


\begin{proof}
By \eqref{eq:binom-am}, equation \eqref{eq:s_cp} becomes
\[
S_{CP}^{(N)}(n,k)
=
\sum_{0\le l\le N}\frac{\tau_l(n)}{l!}
\sum_{m\ge 0}t^m
\sum_{j\ge 0}(-1)^j\frac{\lambda^j}{j!}a_m(j)(jt)^l.
\]
On the other hand, \eqref{eq:def-cee-via-a-tau} gives
\[
S_{ee}^{(N)}(n,k)
=
\sum_{0\le l\le N}\frac{\tau_l(n)}{l!}
\sum_{0\le m\le N-l}t^m
\sum_{j\ge 0}(-1)^j\frac{\lambda^j}{j!}a_m(j)(jt)^l.
\]
Subtracting, only the terms \(m>N-l\) remain. Now
\[
(jt)^l\lambda^j
=
(-1)^l\frac{d^l}{dn^l}\lambda^j,
\]
because \(\frac{d}{dn}\lambda^j=-jt\,\lambda^j\) with \(k\) fixed. Also,
\[
h_m(y)=\sum_{j\ge 0}(-1)^j\frac{y^j}{j!}a_m(j),
\]
so termwise differentiation yields
\[
\sum_{j\ge 0}(-1)^j\frac{\lambda^j}{j!}a_m(j)(jt)^l
=
(-1)^l\frac{d^l}{dn^l}h_m(\lambda).
\]
Summing over \(m>N-l\) gives \(f_{N-l}(\lambda,1/k)\), proving \eqref{eq:cp-ee}.
\end{proof}

\medskip
\noindent\textbf{Derivative bounds.}
The tail estimate in Lemma~\ref{lem:be-analogue}, with \(w=1\) and division by \(e^y\), gives
\[
|f_M(y,t)|\le C_M e^{-y}(y^2t)^{M+1}
\qquad
(y\ge 0,\ yt\le \tfrac{1}{2}).
\]
We use the following two consequences.

\begin{lemma}\label{lem:fy-derivatives}
Fix integers \(M,r\ge 0\). There exists \(C_{M,r}>0\) such that, whenever \(0<t\le 1\), \(y\ge 0\), and \(yt\le \frac{1}{2}\),
\[
\left|\partial_y^r f_M(y,t)\right|
\le
C_{M,r}\,e^{-y}\,t^{M+1}(1+y^{2M+2}).
\]
\end{lemma}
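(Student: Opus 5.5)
We need to bound $\partial_y^r f_M(y,t)$, where $f_M(y,t)=(1-yt)^{1/t}-\sum_{m\le M}h_m(y)t^m$. The starting point is the known bound $|f_M(y,t)|\le C_Me^{-y}(y^2t)^{M+1}$, valid for $yt\le\frac12$, which comes from Lemma~\ref{lem:be-analogue}.

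\textbf{Approach.} Rather than differentiating the inequality, we differentiate the exact expression and reduce to the same kind of tail estimate. Write
\[
(1-yt)^{1/t}=e^{-y}\exp\bigl(-d(y,t)\bigr),\qquad d(y,t):=\sum_{r\ge1}\frac{y^{r+1}t^r}{r+1}.
\]
Then $h_m(y)=e^{-y}P_m(y)$, where $P_m(y):=[t^m]e^{-d(y,t)}$ is a polynomial of degree $2m$. Setting
\[
\phi_M(y,t):=e^{-d}-\sum_{m\le M}P_m(y)t^m,
\]
we have $f_M=e^{-y}\phi_M$. By Leibniz,
\[
\partial_y^r f_M=e^{-y}\sum_{i\le r}\binom ri(-1)^{r-i}\partial_y^i\phi_M.
\]
It therefore suffices to prove $|\partial_y^i\phi_M|\le C\,t^{M+1}(1+y^{2M+2})$ for $i\le r$ and $yt\le\frac12$.

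\textbf{Key step.} To bound the derivatives of $\phi_M$, use Cauchy's estimate in $y$. Fix real $y$ with $yt\le\frac12$. Consider the complex disc $|z-y|\le\delta$, with $\delta:=\min\{1,\,y/4+1\}$; we may simply take $\delta=\frac14$ when $t\le 1$, and adjust so that $|z|t\le\frac34$ on the disc. On this disc the majorant argument of Lemma~\ref{lem:bb-inequality} carries over to complex arguments. Indeed, $d(z,t)$ has coefficients dominated by those of $\frac{|z|^2t}{2(1-|z|t)}$, so $|d(z,t)|\le 2|z|^2t$. The Taylor remainder for $e^{-w}$ with complex $w$ satisfies $|e^{-w}-\sum_{s\le N}(-w)^s/s!|\le |w|^{N+1}e^{|w|}/(N+1)!$. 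Applying it, together with the positive-coefficient tail bound (now with radius $R$ chosen so that $|z|t/R\le 1/2$, e.g.\ $R=\frac34$ after rescaling), gives
\[
|\phi_M(z,t)|\le C\,(|z|^2t)^{M+1}e^{2|z|^2t}.
\]

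\textbf{Main obstacle.} The factor $e^{2|z|^2t}$ is not bounded: $|z|^2t$ can be as large as $y/2$. This is the crux, because the target bound $t^{M+1}(1+y^{2M+2})$ allows no such exponential growth. I would handle it by splitting into two regimes.

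\emph{Regime $y^2t\le1$.} Here $e^{2|z|^2t}=O(1)$ on the disc. Cauchy's estimate $|\partial^i\phi_M(y)|\le i!\,\delta^{-i}\max_{|z-y|=\delta}|\phi_M|$ then directly gives $C\,t^{M+1}(1+y)^{2M+2}$.

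\emph{Regime $y^2t>1$.} Here the claimed bound is at least of order $1$ times $t^{M+1}y^{2M+2}\ge1$, so it is enough to bound the two pieces of $\partial^i\phi_M$ separately, each by $C(1+y^{2M+2})t^{M+1}$.
\begin{itemize}
\item The exact term: $\partial_y^i e^{-d(y,t)}$ is handled by Cauchy on the same disc. For real $y$ we have $\operatorname{Re} d\ge0$, and on the disc $\operatorname{Re} d(z,t)\ge d(y,t)-O(|z-y|\,|\partial_y d|)$ with $\partial_y d=O(yt)\cdot y/(1-yt)$. This requires choosing the radius $\delta\asymp 1/(1+y^2t)$ so that the perturbation of $\operatorname{Re} d$ stays $O(1)$. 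Cauchy's estimate then costs a factor $\delta^{-i}=(1+y^2t)^i$, which is polynomial in $y^2t$ and is therefore absorbed once $y^2t>1$.
\item The polynomial part: $\partial_y^iP_m(y)t^m$ is bounded by $C(1+y)^{2m}t^m\le C(1+y^{2M+2})t^{M+1}\cdot(y^2t)^{m-M-1}$, and since $y^2t>1$ and $m\le M$ this is at most $C(1+y^{2M+2})t^{M+1}$.
\end{itemize}

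The remaining task is to verify that the lost powers, $(1+y^2t)^i$ from the small Cauchy radius in the exact term and the extra $y^{2i}$-type factors from the polynomial part, are compensated within the allowed bound. Doing so may require enlarging the exponent bookkeeping, with $C_{M,r}$ absorbing constants and $r$-dependent powers, or restricting attention to the uses in the paper where $y=\lambda\le\sqrt n$. I expect this bookkeeping in the regime $y^2t>1$ to be the main difficulty.
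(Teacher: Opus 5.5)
Your reduction to $\phi_M$ (via $f_M=e^{-y}\phi_M$ and Leibniz), your Cauchy-estimate argument on a fixed-radius disc in the regime $y^2t\le 1$, and your bound on the polynomial part $\sum_{m\le M}\partial_y^iP_m(y)t^m$ when $y^2t>1$ are all sound.

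One correction to the disc bound. You should claim $|\phi_M(z,t)|\le C\,t^{M+1}(1+|z|)^{2M+2}$, not $C(|z|^2t)^{M+1}e^{2|z|^2t}$. For $M\ge1$ the latter fails for small $|z|$, because $P_{M+1}$ contains the monomial $-z^{M+2}/(M+2)$; for example $\phi_1(z,t)=-\tfrac13z^3t^2+O(z^4t^2)$. The quoted real bound $|f_M|\le C_Me^{-y}(y^2t)^{M+1}$ has the same defect for non-integer $0<y<1$. This is harmless, since you conclude with $(1+y)^{2M+2}$ anyway.

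The genuine gap is the exact term $\partial_y^ie^{-d(y,t)}$ when $y^2t>1$. With your radius $\delta\asymp(1+y^2t)^{-1}$, Cauchy's estimate loses a factor $(1+y^2t)^i$. The target $t^{M+1}y^{2M+2}=(y^2t)^{M+1}$ absorbs this only when $i\le M+1$. But $r$ and $M$ are independent, and $y^2t$ is unbounded on the region (it equals $y/2$ on the edge $yt=\tfrac12$). So the argument fails for $i>M+1$. You leave this open and suggest restricting the statement, which does not prove the lemma as stated. Also, contrary to your closing remark, the polynomial part produces no extra $y^{2i}$-type factors: differentiation only lowers the degree.

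The obstacle comes from a miscomputed derivative. In fact $\partial_yd(y,t)=\sum_{r\ge1}y^rt^r=\frac{yt}{1-yt}\le 1$ for $yt\le\tfrac12$, not something of order $y^2t$. For $j\ge2$, $\partial_y^jd=\frac{(j-1)!\,t^{j-1}}{(1-yt)^j}\le (j-1)!\,2^j$, since $t\le1$. So all $y$-derivatives of $d$ are uniformly bounded on the region, and Fa\`a di Bruno gives $|\partial_y^ie^{-d}|\le C_i\,e^{-d(y,t)}\le C_i$.

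Equivalently, you can use Cauchy on the fixed disc $|z-y|\le\tfrac14$. There $|z|t\le\tfrac34$ and $|\partial_zd|\le 3$, hence $\mathrm{Re}\,d(z,t)\ge d(y,t)-\tfrac34\ge-\tfrac34$.

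Since $y^2t>1$ and $t\le1$ force $y>1$, you get $C_i\le C_i\,(y^2t)^{M+1}=C_i\,t^{M+1}y^{2M+2}$ for every $i$. Combined with your polynomial-part estimate, the regime $y^2t>1$ then closes with no further bookkeeping.

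For comparison, the paper gives no proof of this lemma; it presents it simply as a consequence of the pointwise tail bound. A pointwise bound does not by itself control derivatives, so an analytic argument along your lines, with the correction above, is what is actually required.
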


\begin{prop}\label{prop:n-derivative-f-lsharp}
Fix integers \(N\ge 0\) and \(0\le l\le N\). For \(k\ge 1\) and \(q\le \frac{1}{2}\),
\[
\left|
\frac{d^l}{dn^l}f_{N-l}\left(\lambda,\frac{1}{k}\right)
\right|
\le
C_{N,l}\,e^{-\lambda}\,k^{-(N+1)}\bigl(1+\lambda^{2N-l+2}\bigr).
\]
\end{prop}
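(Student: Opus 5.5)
The plan is to differentiate with respect to $n$ with $k$ fixed, using the chain rule: $\lambda=ke^{-n/k}$ gives $\frac{d\lambda}{dn}=-\lambda t$ with $t=1/k$. By induction on $l$, the operator $\frac{d^l}{dn^l}$ applied to a function $F(\lambda)$ takes the form
\[
	\frac{d^l}{dn^l}F(\lambda)=(-t)^l\sum_{1\le r\le l}\Stirling{l}{r}\lambda^r F^{(r)}(\lambda)
	\qquad(l\ge1).
\]
This is the Touchard/Grunert identity $(\lambda\partial_\lambda)^l=\sum_r\Stirling{l}{r}\lambda^r\partial_\lambda^r$, combined with $\frac{d}{dn}=-t\,\lambda\partial_\lambda$. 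For $l=0$ the sum is just $F$. We apply this with $F(y)=f_{N-l}(y,t)$ and $t=1/k$ held fixed. This is legitimate because $k$ does not depend on $n$ in the differentiation.

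Next, plug Lemma~\ref{lem:fy-derivatives} with $M=N-l$ into each term. The hypothesis $q\le\frac12$ should be read as $\lambda t=\lambda/k\le\frac12$, i.e.\ $yt\le\frac12$, and $0<t\le1$ holds since $k\ge1$. For each $1\le r\le l$ this gives
\[
	t^l\lambda^r\bigl|\partial_y^r f_{N-l}(\lambda,t)\bigr|
	\le C_{N-l,r}\,e^{-\lambda}\,t^{l}\,t^{N-l+1}\,\lambda^r\bigl(1+\lambda^{2(N-l)+2}\bigr).
\]
The powers of $t$ combine to $t^{N+1}=k^{-(N+1)}$, which is the required decay.

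It remains to bound the polynomial factor. Since $1\le r\le l$, we have
\[
	\lambda^r\bigl(1+\lambda^{2N-2l+2}\bigr)\le 2\bigl(1+\lambda^{2N-2l+2+l}\bigr)=2\bigl(1+\lambda^{2N-l+2}\bigr).
\]
This follows from $\lambda^a\le 1+\lambda^b$ for $0\le a\le b$. Summing over $r$ with the fixed weights $\Stirling{l}{r}$ then gives the constant $C_{N,l}$. The case $l=0$ is simply Lemma~\ref{lem:fy-derivatives} with $r=0$.

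The only delicate point is Lemma~\ref{lem:fy-derivatives} itself, which the paper states as already available. The key is that the $y$-derivatives of $f_M$ inherit the bound $e^{-y}t^{M+1}\times\mathrm{poly}(y)$. If this needed independent justification, I would get it from Cauchy estimates on a small complex disc around $y$, of radius say $\min(1,\,1/(4t)-y/2)$. On that disc the tail bound of Lemma~\ref{lem:be-analogue} extends to complex arguments, since its coefficient majorants are positive. Apart from that lemma, the argument above is bookkeeping. The one step to check carefully is that the Stirling-number form of the chain rule produces no terms with $r=0$ when $l\ge1$; this matters because the weight $\lambda^r$ with $r\ge1$ must not drop the exponent below what is claimed. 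That is clear, since every application of $\lambda\partial_\lambda$ contributes at least one derivative.
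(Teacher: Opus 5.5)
Your argument is correct and is the one the paper leaves implicit; the paper states this proposition without proof, as a consequence of Lemma~\ref{lem:fy-derivatives}, and with $\frac{d}{dn}=-t\,\lambda\partial_\lambda$ and $(\lambda\partial_\lambda)^l=\sum_{1\le r\le l}\Stirling{l}{r}\lambda^r\partial_\lambda^r$ the worst term $r=l$ gives exactly the exponent $2N-l+2$ in the statement. The only flaw is in your optional aside on proving that lemma: the Cauchy radius $\min(1,\frac1{4t}-\frac y2)$ shrinks to $0$ at $yt=\frac12$, so use radius $\min(1,\frac1{4t})$ instead (keeping $|z|t\le\frac34$ and rerunning the tail estimate with a larger $R<1$), and note that for complex $z$ the exponential remainder also needs $\Re d_z^{[\mathrm{be}]}(t)\ge -O(1)$ on the disc, which positive coefficient majorants alone do not give when $y^2t$ is large.
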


\begin{thm}\label{thm:ee-main}
Fix an odd integer \(N\ge 0\). If
\[
\frac{n}{W(n)}\le k\le \frac{2n}{\log n+6},
\qquad\text{equivalently}\qquad
1\le \lambda\le \frac{2}{e^3}\cdot \frac{\sqrt n}{\log n+6},
\]
then
\[
S(n,k)
=
S_{ee}^{(N)}(n,k)
+
O_N\left(
e^{-\lambda}
\left(\frac{\lambda\log n}{\sqrt n}\right)^{N+1}
\right).
\]
\end{thm}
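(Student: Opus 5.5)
The plan is a triangle inequality: pass from $S(n,k)$ to the Poisson--Charlier truncation $S_{CP}^{(N)}(n,k)$ by \eqref{eq:s-cp-restatement}, then from $S_{CP}^{(N)}$ to $S_{ee}^{(N)}$ by the exact identity \eqref{eq:cp-ee} together with Proposition~\ref{prop:n-derivative-f-lsharp}.

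Since $N$ is odd, we may write $N+1=2s_1$. Theorem~\ref{Poi_Char_expansion} (in the form \eqref{eq:s-cp-restatement}) applies because the stated range of $k$ is exactly its range. It gives
\[
S(n,k)=S_{CP}^{(N)}(n,k)+O\Bigl(\Bigl(1-\frac{\lambda}{k}\Bigr)^k\Bigl(\frac{\lambda\log n}{\sqrt n}\Bigr)^{N+1}\Bigr).
\]
To convert the prefactor, note that $(1-\frac{\lambda}{k})^k\le e^{-\lambda}$. So this error is already of the claimed form $O(e^{-\lambda}(\lambda\log n/\sqrt n)^{N+1})$; no lower bound on the binomial factor is needed.

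Next I would bound the difference $S_{CP}^{(N)}-S_{ee}^{(N)}$ using the exact identity \eqref{eq:cp-ee}. By \eqref{E:tau-bound}, $|\tau_l(n)/l!|\le n^{l/2}$. By Proposition~\ref{prop:n-derivative-f-lsharp}, each $n$-derivative term is at most $C\,e^{-\lambda}k^{-(N+1)}(1+\lambda^{2N-l+2})$. The hypothesis $q\le\frac12$ there is presumably $\lambda/k\le\frac12$, which holds since $\lambda=O(\sqrt n/\log n)$ and $k\asymp n/\log n$.

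Multiplying these bounds, the $l$-th term is $O\bigl(e^{-\lambda}n^{l/2}k^{-N-1}(1+\lambda^{2N+2-l})\bigr)$. With $k\ge n/\log n$, we have $k^{-1}\le \log n/n$. Since $\lambda\ge1$, the polynomial factor is $O(\lambda^{2N+2-l})$. Hence the $l$-th term is
\[
O\Bigl(e^{-\lambda}\Bigl(\frac{\log n}{n}\Bigr)^{N+1}n^{l/2}\lambda^{2N+2-l}\Bigr)
=O\Bigl(e^{-\lambda}\Bigl(\frac{\lambda\log n}{\sqrt n}\Bigr)^{N+1}\cdot\Bigl(\frac{\lambda}{\sqrt n}\Bigr)^{N+1-l}(\log n)^{0}\Bigr).
\]
For $l\le N$ the last factor is $\lambda^{N+1-l}n^{-(N+1-l)/2}=O((\log n)^{-(N+1-l)})=o(1)$, because $\lambda=O(\sqrt n/\log n)$. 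Summing over the finitely many $l$ gives $O_N(e^{-\lambda}(\lambda\log n/\sqrt n)^{N+1})$. Combining this with the first step proves the theorem.

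The main obstacle is Proposition~\ref{prop:n-derivative-f-lsharp}, which is stated without proof and which I would want to justify. The route would be the chain rule. Since $\partial_n\lambda=-\lambda/k$, each $n$-derivative applied to $f_M(\lambda,1/k)$ (with $k$ fixed) yields sums of terms $\partial_y^r f_M\cdot\prod(\text{derivatives of }\lambda)$. Each derivative of $\lambda$ contributes $O(\lambda k^{-1})$, and $\partial_y^rf_M$ is controlled by Lemma~\ref{lem:fy-derivatives}.

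Doing this crudely gives $e^{-\lambda}k^{-(N-l+1)-l}\lambda^{l}(1+\lambda^{2(N-l)+2})$. This has the same total $k$-power $k^{-(N+1)}$ and a $\lambda$-power of at most $2N-l+2$, which is exactly the proposition's bound. So the proposition does follow, provided Lemma~\ref{lem:fy-derivatives} is valid. That lemma I would prove by Cauchy estimates: apply the pointwise bound on $|f_M|$ on a small complex disc around $y$ of radius $\asymp1$, using analyticity of $(1-yt)^{1/t}$ for $|yt|<1$.
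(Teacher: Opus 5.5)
Your argument is correct and is essentially the paper's proof: you pass to $S_{CP}^{(N)}$ via \eqref{eq:s-cp-restatement} using $(1-\frac{\lambda}{k})^k\le e^{-\lambda}$, then bound $S_{CP}^{(N)}-S_{ee}^{(N)}$ exactly as the paper does, through the identity \eqref{eq:cp-ee}, Proposition~\ref{prop:n-derivative-f-lsharp}, $|\tau_l(n)|=O(n^{l/2})$, $k\asymp n/\log n$, $\lambda\ge1$ and $\lambda=O(\sqrt n/\log n)$. Your extra sketch of the auxiliary results, which the paper states without proof, is sound in outline, with one caveat: the Cauchy-estimate step needs a bound on $|f_M(z,t)|$ for complex $z$ on the disc, whereas the paper's bound holds only for real $y\ge0$ and its proof uses the sign of the exponent; you can recover such a bound by checking that $\mathrm{Re}$ of the exponent stays $O(1)$ on a disc of radius at most $\frac14$, which also keeps $|zt|<1$.
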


\begin{proof}
By \eqref{eq:s-cp-restatement} and Proposition~\ref{prop:cp-ee-identity},
\[
S(n,k)-S_{ee}^{(N)}(n,k)
=
O\left(
e^{-\lambda}
\left(\frac{\lambda\log n}{\sqrt n}\right)^{N+1}
\right)
+
\sum_{0\le l\le N}\frac{\tau_l(n)}{l!}\,(-1)^l
\frac{d^l}{dn^l}f_{N-l}\left(\lambda,\frac{1}{k}\right).
\]
Hence, by Proposition~\ref{prop:n-derivative-f-lsharp},
\[
\left|S_{CP}^{(N)}(n,k)-S_{ee}^{(N)}(n,k)\right|
=O\left(
e^{-\lambda}
\sum_{0\le l\le N}
|\tau_l(n)|\,k^{-(N+1)}\bigl(1+\lambda^{2N-l+2}\bigr)\right).
\]
Using the upper bound \(|\tau_l(n)|=O(n^{l/2})\) (see \eqref{E:tau-bound}) and \(k\asymp n/\log n\), this is bounded above by 
\[
e^{-\lambda}
\sum_{0\le l\le N}
(\log n)^{N+1}n^{-(N+1-l/2)}
\bigl(1+\lambda^{2N-l+2}\bigr).
\]
Since \(\lambda\ge 1\) and \(l\le N\),
\[
n^{-(N+1-l/2)}
=O\left(
n^{-(N+1)/2}\lambda^{N+1}\right),
\]
while
\[
n^{-(N+1-l/2)}\lambda^{2N-l+2}
=
n^{-(N+1)/2}\lambda^{N+1}\cdot
n^{-(N+1-l)/2}\lambda^{N-l+1}
=O\left(
n^{-(N+1)/2}\lambda^{N+1}\right),
\]
because \(\lambda=O(\sqrt n/\log n)=O(\sqrt n)\). Therefore
\[
S_{CP}^{(N)}(n,k)-S_{ee}^{(N)}(n,k)
=
O_N\left(
e^{-\lambda}
\left(\frac{\lambda\log n}{\sqrt n}\right)^{N+1}
\right).
\]
Combining this with \eqref{eq:s-cp-restatement} proves the theorem.
\end{proof}

\bibliographystyle{abbrv}
\bibliography{stir2-refs-2026}
\end{document}